\title{Duality for differential operators of Lie-Rinehart algebras}
\date{\today}
\author{Thierry Lambre}
\address[Thierry Lambre]{
  Laboratoire de Math\'ematiques Blaise Pascal, UMR6620 CNRS,
  Université Clermont Auvergne, Campus des Cézeaux, 3 place Vasarely,
  63178 Aubière cedex, France}
\email{thierry.lambre@uca.fr}
\author{Patrick Le Meur}
\address[Patrick Le Meur]{
  Laboratoire de Math\'ematiques Blaise Pascal, UMR6620 CNRS,
  Université Clermont Auvergne, Campus des Cézeaux, 3 place Vasarely,
  63178 Aubière cedex, France}
\curraddr{Universit\'e Paris Diderot, Sorbonne Universit\'e, CNRS, Institut de
   Math\'ematiques de Jussieu-Paris Rive Gauche, IMJ-PRG, F-75013, Paris, France}
\email{patrick.le-meur@imj-prg.fr}
\theoremstyle{plain}
\newtheorem{thm}{Theorem}
\newtheorem{cor}{Corollary}
\newtheorem{lem}{Lemma}[subsection]
\newtheorem{prop}[lem]{Proposition}
\newtheorem{corr}[lem]{Corollary}
\theoremstyle{definition}
\newtheorem{exple}[lem]{Example}
\newtheorem*{rem}{Remark}
\newcommand{\Hom}{\mathrm{Hom}}
\newcommand{\Mod}{\mathrm{Mod}}
\newcommand{\Ext}{\mathrm{Ext}}
\numberwithin{equation}{section}
\subjclass[2010]{Primary 16E35, 16E40, 16S32, 16W25; Secondary 17B63, 17B66}
\keywords{Lie-Rinehart algebra, enveloping algebra, Calabi-Yau
  algebra, skew Calabi-Yau algebra, Van den Bergh duality.}
\begin{document}

\begin{abstract}
  Let $(S,L)$ be a Lie-Rinehart algebra over a commutative ring
  $R$. This article proves that, if $S$ is flat as an $R$-module and
  has Van den Bergh duality in dimension $n$, and if $L$ is finitely
  generated and projective with constant rank $d$ as an $S$-module,
  then the enveloping algebra of $(S,L)$ has Van den Bergh duality in
  dimension $n+d$. When, moreover, $S$ is Calabi-Yau and the $d$-th
  exterior power of $L$ is free over $S$, the article proves that the
  enveloping algebra is skew Calabi-Yau, and it describes a Nakayama
  automorphism of it. These considerations are specialised to Poisson
  enveloping algebras. They are also illustrated on Poisson structures
  over two and three dimensional polynomial algebras and on
  Nambu-Poisson structures on certain two dimensional hypersurfaces.
\end{abstract}

\maketitle

\setcounter{tocdepth}{1}
\tableofcontents

\section*{Introduction}

Rinehart introduced in \cite{MR0154906} the concept of Lie-Rinehart
algebra $(S,L)$ over a commutative ring $R$ and defined its enveloping
algebra $U$. This generalises both constructions of universal
enveloping algebras of $R$-Lie algebras and algebras of differential
operators of commutative $R$-algebras. In \cite{MR1696093},
Huebschmann investigated Poincaré duality on the (co)homology groups
of $(S,L)$. This duality is defined by the existence of a right
$U$-module $C$, called the \emph{dualising} module of $(S,L)$ such
that, for all left $U$-modules $M$ and $k\in \mathbb N$,
\begin{equation}
  \label{eq:66}
\mathrm{Ext}_{U}^k(S,M) \cong \mathrm{Tor}^U_{d-k}(C,M)\,.
\end{equation}
Chemla proved in \cite{MR1718638} that for Lie-Rinehart
algebras arising from affine complex Lie algebroids, the algebra $U$
has a rigid dualising complex, which she determined, and has Van
den Bergh duality \cite{MR1443171}. Having Van den Bergh duality in
dimension $n$ for an $R$-algebra $A$ means that
\begin{itemize}
\item $A$ is homologically smooth, that is $A$ lies in the perfect
  derived category  $\mathrm{per}(A^e)$ of the algebra
  $A^e:=A\otimes_RA^\mathrm{op}$; and
\item $\Ext^\bullet_{A^e}(A,A^e)$ is zero for
  $\bullet\neq 0$ and invertible as an $A$-bimodule if $\bullet=n$.
\end{itemize}
When this occurs, there is a functorial isomorphism, for all
$A$-bimodules $M$ and integers $i$ (see \cite{MR1443171}),
\[
\Ext^i_{A^e}(A,M) \cong \mathrm{Tor}_{n-i}^{A^e}(A,\Ext^n_{A^e}(A,A^e)\otimes_AM)\,;
\]
and $\Ext^n_{A^e}(A,A^e)$ is called the inverse dualising bimodule of
$A$.  Two classes of algebras with Van den Bergh duality are of
particular interest, namely
\begin{itemize}
\item \emph{Calabi-Yau} algebras, for which $\Ext^n_{A^e}(A,A^e)$ is
  required to be isomorphic to $A$ as an $A$-bimodule
  (see~\cite{G06}); and
\item \emph{skew Calabi-Yau} algebras, for which there exists an
  automorphism
  \[
  \nu \in \mathrm{Aut}_{R-\mathrm{alg}}(A)
  \]
  such that
  $\Ext^n_{A^e}(A,A^e)\simeq A^\nu$ as $A$-bimodules (see
  \cite{MR3250287}); here $A^\nu$ denotes the $A$-bimodule obtained
  from $A$ by twisting the action of $A$ on the right by $\nu$.
\end{itemize}
The relevance of these algebras comes from their role in the
noncommutative geometry initiated in \cite{MR917738} and in the
investigation of Calabi-Yau categories, and also from the
specificities of their Hochschild cohomology when $R$ is a field. For
instance, it is proved in \cite{G06,MR2670971} that the Gerstenhaber
bracket of the Hochschild cohomology of Calabi-Yau algebras have a BV
generator.

\medskip

This article investigates when the enveloping algebra $U$ of a general
Lie-Rinehart algebra $(S,L)$ over a commutative ring $R$ has Van den
Bergh duality. 

\medskip

It considers Lie-Rinehart algebras $(S,L)$ such that $S$ has Van den
Bergh duality and is flat as an $R$-module, and $L$ is finitely
generated and projective with constant rank $d$ as an $S$-module.  Under
these conditions, it is proved that $U$ has Van den Bergh
duality. Note that, when $R$ is a perfect field, the former condition
amounts to saying that $S$ is a smooth affine $R$-algebra
\cite{MR2395770}. Note also that, under the latter condition, it is
proved in \cite[Theorem 2.10]{MR1696093} that $(S,L)$ has duality in
the sense of \eqref{eq:66}.  Under the additional assumption that $S$
is Calabi-Yau and $\Lambda^dL$ is free as an $S$-module, it
appears as a corollary that $U$ is skew Calabi-Yau, and a Nakayama
automorphism may be described explicitly. These considerations are
specialised to the situation where the Lie-Rinehart algebra $(S,L)$
arises from a Poisson structure on $S$. Also they are
illustrated by detailed examples in the following cases:
\begin{itemize}
\item For Poisson brackets on polynomial algebras in two or three
  variables; 
\item For Nambu-Poisson structures on two dimensional hypersurfaces
of the shape  $1+T(x,y,z)=0$, where $T$ is a weight homogeneous polynomial.
\end{itemize}

Throughout the article, $R$ denotes a commutative ring, $(S,L)$
denotes a Lie-Rinehart algebra over $R$ and $U$ denotes its enveloping
algebra. Given an $R$-Lie algebra $\mathfrak g$, its universal
enveloping algebra is denoted by $\mathcal U_R(\mathfrak g)$. For an
$R$-algebra $A$, the category of left $A$-modules is denoted by
$\Mod(A)$ and $\Mod(A^\mathrm{op})$ is identified with the category of
right $A$-modules. For simplicity, the piece of notation $\otimes$ is
used for $\otimes_R$. All complexes have differential of degree $+1$.

\section{Main results}
\label{sec:main-results}

A \emph{Lie-Rinehart} algebra over a commutative ring $R$ is a pair
$(S,L)$ where $S$ is a commutative $R$-algebra and $L$ is a Lie
$R$-algebra which is also a left $S$-module, endowed with a homomorphism of
$R$-Lie algebras
  \begin{equation}
    \label{eq:1}
    \begin{array}{rcl}
      L & \to & \mathrm{Der}_R(S) \\
      \alpha & \mapsto & \partial_\alpha:=\alpha(-)
    \end{array}
  \end{equation}
  such that, for all $\alpha,\beta\in L$ and $s\in S$,
  \[
  [\alpha,s\beta] = s[\alpha,\beta] +\alpha(s) \beta\,.
  \]
  Following \cite{MR1696093}, the \emph{enveloping}
  algebra $U$ of $(S,L)$ is identified with the algebra
  \[
  (S\rtimes L)/I\,,
  \]
  where $S\rtimes L$ is the smash-product algebra of $S$ by the action
  of $L$ on $S$ by derivations and $I$ is the two-sided ideal of
  $S\rtimes L$ generated by
  \[
  \{ s \otimes \alpha - 1 \otimes s\alpha\ |\ s\in S,\,\alpha \in L\}
  \]
  (see Lemma~\ref{sec:cont-notat-used}); it is proved in
  \cite{MR1696093} that this set generates $I$ as a right ideal.

  As mentioned in the introduction, when $L$ is a finitely generated
$S$-module with constant rank $d$, the Lie-Rinehart algebra $(S,L)$
has duality in the sense of \eqref{eq:66}  with $C=\Lambda^d_SL^\vee$. Here
$-^\vee$ is the duality $\Hom_S(-,S)$ and $\Lambda^d_SL^\vee$
is considered as a right $U$-module using the Lie derivative
$\lambda_\alpha$, for $\alpha\in L$ (see \cite[Section 2]{MR1696093}),
\[
\lambda_\alpha \colon \Lambda^\bullet_SL^\vee \to
\Lambda^\bullet_SL^\vee\,;
\]
this is the derivation of $\Lambda^\bullet_SL^\vee$ such that, for all
$s\in S$, $\varphi \in L^\vee$ and $\beta \in L$,
\[
\lambda_\alpha(s) = \alpha(s) \ \text{and}\
\lambda_\alpha(\varphi)(\beta) = \alpha(\varphi(\beta)) -
\varphi([\alpha,\beta])\,.
\]
The right $U$-module structure of $\Lambda^d_SL^\vee$ is such
that, for all $\varphi \in \Lambda^d_SL^\vee$ and $\alpha\in L$,
\begin{equation}
  \label{eq:67}
\varphi \cdot \alpha = - \lambda_\alpha(\varphi)\,.
\end{equation}

The first main result of the article gives sufficient conditions for
$U$ to have Van den Bergh duality. It also describes the inverse
dualising bimodule. Here are some explanations on this description.
On one hand, $R$-linear derivations $\partial\in \mathrm{Der}_R(S)$ act
on $\Ext^n_{S^e}(S,S^e)$, $n\in \mathbb N$, by Lie derivatives (see
Section~\ref{sec:left-u-module}),
\[
\mathcal L_\partial \colon \Ext^n_{S^e}(S,S^e) \to \Ext^n_{S^e}(S,S^e)\,.
\]
Combining with the action of $L$ on $S$ yields an action
$\alpha\otimes e \mapsto \alpha \cdot e$ of $L$ on
$\Ext^n_{S^e}(S,S^e)$ such that, for all $\alpha\in L$ and
$e\in \Ext^n_{S^e}(S,S^e)$,
\[
\alpha \cdot e = \mathcal L_{\partial_\alpha}(e)\,,
\]
Although this is not a $U$-module structure on $\Ext^n_{S^e}(S,S^e)$,
it defines a left $U$-module structure on
$\Lambda^d_SL^\vee \otimes_S \Ext^n_{S^e}(S,S^e)$, $d\in \mathbb N$,
such that, for all $\alpha \in L$, $\varphi \in \Lambda^d_SL^\vee$ and
$e\in \Ext^n_{S^e}(S,S^e)$,
\[
\alpha \cdot (\varphi \otimes e) = -\varphi \cdot \alpha \otimes e +
\varphi \otimes 
\alpha \cdot e\,.
\]
On the other hand, consider the functor
\[
F\colon \Mod(U) \to \Mod(U^e)
\]
(see Section~\ref{sec:functor-fcolon-modu}) such that, if
$N\in \Mod(U)$, then $F(N)$ equals $U\otimes_S N$ in $\Mod(U)$ and has
a right $U$-module structure defined by the following formula, for all
$\alpha\in L$, $u\in U$ and $n \in N$,
\[
( u \otimes n) \cdot \alpha = u\alpha \otimes n - u \otimes \alpha
\cdot n\,.
\]
This functor takes left $U$-modules which are invertible as
$S$-modules to invertible $U$-bimodules (see
Section~\ref{sec:invert-u-bimod-1}).  The main result of this article
is the following.
\begin{thm}
  \label{sec:introduction}
  Let $R$ be a commutative ring. Let $(S,L)$ be a Lie-Rinehart algebra
  over $R$. Denote by $U$ the enveloping algebra of $(S,L)$. Assume that
  \begin{itemize}
  \item $S$ is flat as an $R$-module,
  \item $S$ has Van den Bergh duality in dimension $n$,
  \item $L$ is finitely generated and projective with constant rank
    $d$ as an $S$-module.
  \end{itemize}
  Then, $U$ has Van den Bergh duality in dimension $n+d$ and there is
  an isomorphism of $U$-bimodules
  \[
  \Ext^{n+d}_{U^e}(U,U^e) \simeq F(\Lambda_S^dL^\vee \otimes_S \Ext^n_{S^e}(S,S^e))\,.
  \]
\end{thm}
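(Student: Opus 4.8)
The strategy is to build a projective bimodule resolution of $U$ from resolutions of $S$ and of $L$, and then to compute $\Ext^\bullet_{U^e}(U,U^e)$ using that resolution. First, I would exploit the standard Koszul-type resolution available for enveloping algebras of Lie-Rinehart algebras: since $L$ is finitely generated projective of constant rank $d$ over $S$, there is a Chevalley-Eilenberg-Rinehart complex $\Lambda^\bullet_S L \otimes_S U \to S$ (or its bimodule analogue $U \otimes_S \Lambda^\bullet_S L \otimes_S U$), which is a length-$d$ projective resolution of $U$ over $U \otimes_S U$. Separately, since $S$ has Van den Bergh duality in dimension $n$ and is $R$-flat, $S \in \mathrm{per}(S^e)$ with $\Ext^\bullet_{S^e}(S,S^e)$ concentrated in degree $n$ and invertible there. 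The key algebraic input is that $U$ is built from $S$ by a PBW-type filtration whose associated graded is (Morita-theoretically) governed by $S^e$ and the symmetric algebra on $L$; combining the $S$-bimodule resolution of $S$ with the Rinehart complex should yield that $U \in \mathrm{per}(U^e)$, establishing homological smoothness and giving $n+d$ as the candidate dimension.

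Next, to identify $\Ext^{n+d}_{U^e}(U,U^e)$, I would compute in two stages corresponding to the two "halves" of the resolution. Applying $\Hom_{U^e}(-,U^e)$ to the Rinehart bimodule resolution produces a complex whose cohomology involves $\Lambda^d_S L^\vee$ twisted appropriately (this is where the right $U$-module structure on $\Lambda^d_S L^\vee$ via Lie derivatives, formula \eqref{eq:67}, enters). Then one must bring in the $S$-side duality, which contributes $\Ext^n_{S^e}(S,S^e)$. The interplay is encoded precisely by the functor $F\colon \Mod(U)\to\Mod(U^e)$: the claim is that the full computation collapses to $F$ applied to the left $U$-module $\Lambda^d_SL^\vee \otimes_S \Ext^n_{S^e}(S,S^e)$, whose left $U$-module structure — combining $-\varphi\cdot\alpha$ on the exterior power with the Lie-derivative action $\mathcal L_{\partial_\alpha}$ on $\Ext^n_{S^e}(S,S^e)$ — is exactly dictated by how the differentials of the combined resolution act. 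A spectral sequence (or a careful filtration argument) associated to the two-step resolution, degenerating because $\Ext^\bullet_{S^e}(S,S^e)$ is concentrated in a single degree and $\Lambda^\bullet_SL$ likewise, would make this rigorous.

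The main technical obstacle, I expect, is keeping track of the bimodule structures and the twists: $\Ext^{n+d}_{U^e}(U,U^e)$ is an object of $\Mod(U^e)$, but both the Rinehart resolution and the Van den Bergh duality for $S$ naturally produce one-sided ($S$- or $U$-) module data, so one must verify that the second $U$-action assembled on $F(-)$ via the formula $(u\otimes n)\cdot\alpha = u\alpha\otimes n - u\otimes\alpha\cdot n$ genuinely matches the right $U$-action coming from $\Hom_{U^e}(-,U^e)$. This amounts to a compatibility check between the Lie derivative $\mathcal L_\partial$ on $\Ext^n_{S^e}(S,S^e)$ and the connecting maps in the change-of-rings from $S^e$ to $U^e$. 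I would handle this by first treating the case $L$ free over $S$ (so the Rinehart complex is genuinely a Koszul complex on an explicit basis), making all identifications explicit, and then descending to the projective case by a localization/patching argument, using that invertibility and the duality isomorphism are local conditions and that $F$ is compatible with such localization. The invertibility of the resulting bimodule then follows from the already-cited fact that $F$ sends left $U$-modules invertible over $S$ to invertible $U$-bimodules, once one knows $\Lambda^d_SL^\vee \otimes_S \Ext^n_{S^e}(S,S^e)$ is invertible over $S$ — which it is, being a tensor product of two rank-one projectives.
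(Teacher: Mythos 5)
Your overall strategy coincides with the paper's: use the Rinehart (Chevalley--Eilenberg) resolution of $S$ over $U$, whose dual contributes $\Lambda^d_SL^\vee\simeq\Ext^d_U(S,U)$ in degree $d$, combine it with Van den Bergh duality for $S$ in degree $n$, let the spectral sequence degenerate because each side is concentrated in a single degree, and get invertibility from the fact that $F$ sends left $U$-modules invertible over $S$ to invertible $U$-bimodules. But two steps of your plan have real gaps. First, $U\otimes_S\Lambda^\bullet_SL\otimes_SU$ is \emph{not} a projective resolution of $U$ over $U^e=U\otimes_RU^{\mathrm{op}}$: its terms are only projective relative to $S^e$ (a symmetric $S$-bimodule that is projective over $S$ is in general not projective over $S^e$), so applying $\Hom_{U^e}(-,U^e)$ to it computes a relative Ext, not $\Ext^\bullet_{U^e}(U,U^e)$, and the ``combination'' with an $S^e$-resolution of $S$ is exactly where the work lies. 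A precise change-of-rings mechanism is needed; the paper obtains it from the adjunction $(F,G)$ together with an injective resolution $I^\bullet$ of $U^e$ (using that $U^e$ is projective over $S^e$), giving $\Ext^\bullet_{U^e}(U,U^e)\simeq H^\bullet(\Hom_U(Q^*,U)\otimes_U G(I^*))$ and then the bicomplex spectral sequence. Your smoothness claim, by contrast, can be made to work directly (since $U\otimes_SU\simeq U^e\otimes_{S^e}S$ and $U^e$ is projective over $S^e$, one gets $U\otimes_SU\in\mathrm{per}(U^e)$ from $S\in\mathrm{per}(S^e)$, and then $U\in\mathrm{per}(U^e)$ via the Rinehart resolution and exactness of $F$), and is arguably more direct than the paper's compactness argument.

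Second, and more seriously, your plan for pinning down the $U$-bimodule structure --- do the free case explicitly, then ``descend by localization/patching'' --- is not a proof as stated. The identification you would produce when $L$ is free depends on a choice of basis of $L$ and of a generator of $\Ext^n_{S^e}(S,S^e)$, so locally defined isomorphisms do not glue unless you first construct a canonical, globally defined comparison map and only then check it is an isomorphism locally; asserting that ``the duality isomorphism is a local condition'' presupposes such a map, and compatibility of $\Ext^\bullet_{U^e}(U,U^e)$ with Ore localization of the noncommutative ring $U^e$ would itself need justification. This missing global construction is precisely the content of the paper's Section on Lie derivatives and of the computation of $H^\bullet(G(I^*))$: the action of $L$ on $\Ext^\bullet_{S^e}(S,S^e)$ is defined intrinsically on an arbitrary projective resolution, shown independent of choices, and shown to satisfy $\mathcal L_{s\partial}=\lambda_s\circ\mathcal L_\partial-\lambda_{\partial(s)}$ on $\Ext^\bullet_{S^e}(S,S^e)$ (which is what makes $\Lambda^d_SL^\vee\otimes_S\Ext^n_{S^e}(S,S^e)$ a genuine left $U$-module), and the isomorphism $H^\bullet(G(I^*))\simeq\Ext^\bullet_{S^e}(S,S^e)\otimes_{S^e}U^e$ of $U$-$U^e$-bimodules is proved globally. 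Without this (or a fully worked-out descent argument), the identification $\Ext^{n+d}_{U^e}(U,U^e)\simeq F(\Lambda^d_SL^\vee\otimes_S\Ext^n_{S^e}(S,S^e))$ --- and hence also the invertibility step, which relies on it --- remains unproved.
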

Note that when $R$ is Noetherian and $S$ is finitely generated as an
$R$-algebra and projective as an $R$-module, then there is an
isomorphism of $S$-(bi)modules
\[
\Ext^n_{S^e}(S,S^e) \simeq \Lambda_S^n\mathrm{Der}_R(S)\,;
\]
this isomorphism is compatible with the actions by Lie derivatives
(see Section~\ref{sec:part-case-calabi}). The above theorem was proved in
\cite[Theorem 4.4.1]{MR1718638} when $R=\mathbb C$ and $S$ is finitely
generated as a $\mathbb C$-algebra.

The preceding theorem specialises to the situation where the involved
invertible $S$-modules are free. On one hand, when $(\Lambda^d_SL)^\vee$ is
free as an $S$-module with free generator $\varphi_L$, there is
an associated \emph{trace} mapping
\[
\lambda_L \colon L \to S
\]
such that, for all $\alpha \in L$,
\[
\varphi_L \cdot \alpha = \lambda_L(\alpha)\cdot \varphi_L\,,
\]
where the action on the left-hand side is given by \eqref{eq:67} and
that on the right-hand side is just given by the $S$-module structure.
On the other hand, when $S$ is Calabi-Yau in dimension $n$, each
generator of the free of rank one $S$-module $\Ext^n_{S^e}(S,S^e)$
determines a volume form $\omega_S\in \Lambda^n_S\Omega_{S/R}$, and
the \emph{divergence}
\[
\mathrm{div}\colon \mathrm{Der}_R(S) \to S
\]
associated with $\omega_S$ is defined by the following equality, for
all $\partial\in \mathrm{Der}_R(S)$
\[
\mathcal L_\partial(\omega_S) = \mathrm{div}(\partial)\omega_S\,.
\]
(see Section~\ref{sec:part-case-calabi} for details).
The second main result of the article then reads as follows.
\begin{thm}
  \label{sec:introduction-1}
  Let $R$ be a commutative ring. Let $(S,L)$ be a Lie-Rinehart algebra
  over $R$. Denote by $U$ the enveloping algebra of $(S,L)$. Assume that
  \begin{itemize}
  \item $S$ is flat as an $R$-module,
  \item $S$ is Calabi-Yau in dimension $n$,
  \item $L$ is finitely generated and projective with constant rank
    $d$ and $\Lambda^d_SL$ is free as $S$-modules.
  \end{itemize}
  Then, $U$ is skew Calabi-Yau with a
  Nakayama automorphism $\nu \in \mathrm{Aut}_R(U)$
  such that, for all $s\in S$ and $\alpha\in L$,
  \[
  \left\{
    \begin{array}{rcl}
      \nu(s) & = & s\\
      \nu(\alpha) & = & \alpha + \lambda_L(\alpha) + \mathrm{div}(\partial_\alpha),
    \end{array}\right.
  \]
  where $\lambda_L$ is any trace mapping on $\Lambda^d_SL^\vee$ and $\mathrm{div}$ is any divergence.
\end{thm}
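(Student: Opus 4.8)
The strategy is to deduce Theorem~\ref{sec:introduction-1} from Theorem~\ref{sec:introduction} by identifying the inverse dualising bimodule $F(\Lambda_S^dL^\vee \otimes_S \Ext^n_{S^e}(S,S^e))$ with a twist $U^\nu$ of the regular bimodule. First I would observe that the hypotheses of Theorem~\ref{sec:introduction-1} imply those of Theorem~\ref{sec:introduction}: being Calabi-Yau in dimension $n$ means $S$ has Van den Bergh duality with $\Ext^n_{S^e}(S,S^e)\simeq S$ as an $S$-bimodule, so $U$ has Van den Bergh duality in dimension $n+d$ and its inverse dualising bimodule is $F(\Lambda_S^dL^\vee \otimes_S \Ext^n_{S^e}(S,S^e))$. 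The point is then to compute the left $U$-module $\Lambda_S^dL^\vee \otimes_S \Ext^n_{S^e}(S,S^e)$ explicitly under the freeness assumptions, and to feed the answer into the functor $F$.

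The next step is to trivialise the tensor factors. Since $\Lambda^d_SL$ is free of rank one over $S$, so is $\Lambda^d_SL^\vee=(\Lambda^d_SL)^\vee$, with a generator $\varphi_L$; and since $S$ is Calabi-Yau, $\Ext^n_{S^e}(S,S^e)$ is free of rank one over $S$ with a generator corresponding to a volume form $\omega_S$. Hence $\Lambda_S^dL^\vee \otimes_S \Ext^n_{S^e}(S,S^e)$ is free of rank one over $S$ on $\varphi_L\otimes e_{\omega_S}$. I would then compute the left $U$-action on this generator. By definition of the $U$-module structure recalled before Theorem~\ref{sec:introduction}, for $\alpha\in L$ one has $\alpha\cdot(\varphi_L\otimes e_{\omega_S}) = -(\varphi_L\cdot\alpha)\otimes e_{\omega_S} + \varphi_L\otimes(\alpha\cdot e_{\omega_S})$. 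Now $\varphi_L\cdot\alpha = \lambda_L(\alpha)\varphi_L$ by the definition of the trace mapping, and $\alpha\cdot e_{\omega_S} = \mathcal L_{\partial_\alpha}(e_{\omega_S})$, which corresponds via the volume form identification to $\mathcal L_{\partial_\alpha}(\omega_S) = \mathrm{div}(\partial_\alpha)\,\omega_S$, i.e. $\alpha\cdot e_{\omega_S} = \mathrm{div}(\partial_\alpha)\,e_{\omega_S}$. Also $s\in S$ acts on the generator by multiplication by $s$. Therefore the generator spans a left $U$-submodule isomorphic to $S$ with its given left $S$-action and with $\alpha$ acting as multiplication by $\lambda_L(\alpha)+\mathrm{div}(\partial_\alpha)$; this is exactly the left $U$-module which I will call $S_\chi$, where $\chi\colon L\to S$ is the derivation-like map $\alpha\mapsto \lambda_L(\alpha)+\mathrm{div}(\partial_\alpha)$ (one checks it is an honest left $U$-module because the right-hand twisting data come from the $U$-module structures already established in the paper).

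It then remains to show $F(S_\chi)\simeq U^\nu$ for the prescribed $\nu$. Using the description of $F$, $F(S_\chi)$ equals $U\otimes_S S_\chi\simeq U$ as a left $U$-module, with right action $(u\otimes 1)\cdot\alpha = u\alpha\otimes 1 - u\otimes(\alpha\cdot 1) = u\alpha\otimes 1 - u\,\chi(\alpha)\otimes 1$, and $(u\otimes 1)\cdot s = us\otimes 1$. Transporting along the isomorphism $U\otimes_S S_\chi\xrightarrow{\sim}U$, $u\otimes 1\mapsto u$, the right $U$-action becomes: $u\cdot s = us$ and $u\cdot\alpha = u\alpha - u\chi(\alpha) = u(\alpha - \chi(\alpha))$. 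Hence $F(S_\chi)\simeq U^\nu$ where $\nu\colon U\to U$ is the $R$-algebra map with $\nu(s)=s$ and $\nu(\alpha)=\alpha - \chi(\alpha)$; comparing with the sign conventions in \eqref{eq:67} and in the definition of $F$ (there is a sign in $\varphi\cdot\alpha=-\lambda_\alpha(\varphi)$ that I must track carefully), this yields $\nu(\alpha)=\alpha+\lambda_L(\alpha)+\mathrm{div}(\partial_\alpha)$, as claimed. I would check separately that this $\nu$ is a well-defined $R$-algebra automorphism of $U$ — using the presentation $U=(S\rtimes L)/I$, it suffices to verify compatibility with the relations $[\alpha,s\beta]=s[\alpha,\beta]+\alpha(s)\beta$ and $s\otimes\alpha = 1\otimes s\alpha$, which reduces to the Leibniz identities satisfied by $\lambda_L$ and $\mathrm{div}$ — and that independence of the choices of $\varphi_L$ and $\omega_S$ matches the "any trace mapping / any divergence" phrasing, since changing generators multiplies them by units of $S$ and alters $\lambda_L$, $\mathrm{div}$ by $S$-derivations of the form $\alpha\mapsto\alpha(u)/u$, which are inner in the relevant sense.

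The main obstacle I anticipate is bookkeeping of signs and of the precise bimodule conventions: the functor $F$, the right $U$-module structure on $\Lambda^d_SL^\vee$ via \eqref{eq:67}, and the Lie-derivative action on $\Ext^n_{S^e}(S,S^e)$ each carry sign conventions, and getting $\nu(\alpha)=\alpha+\lambda_L(\alpha)+\mathrm{div}(\partial_\alpha)$ with the correct $+$ signs (rather than $\alpha-\lambda_L(\alpha)-\mathrm{div}(\partial_\alpha)$) requires threading these consistently; beyond that, the argument is essentially a specialisation of Theorem~\ref{sec:introduction} together with the elementary computation of a rank-one $U$-module.
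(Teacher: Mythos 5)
Your overall route is the same as the paper's: specialise Theorem~\ref{sec:introduction}, use the freeness of $\Lambda^d_SL^\vee$ and of $\Ext^n_{S^e}(S,S^e)$ to reduce the inverse dualising bimodule to $F$ of a rank-one left $U$-module, compute the action of $L$ on the generator, and read off $\nu$ from the transported right $U$-action. The gap is in the one step you dismiss as sign bookkeeping, and it is not bookkeeping. You assert that $\alpha\cdot e_{\omega_S}=\mathcal L_{\partial_\alpha}(e_{\omega_S})$ ``corresponds via the volume form identification to'' $\mathcal L_{\partial_\alpha}(\omega_S)=\mathrm{div}(\partial_\alpha)\,\omega_S$, hence equals $\mathrm{div}(\partial_\alpha)\,e_{\omega_S}$. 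This is false as stated: the volume form is a generator of $\mathrm{Tor}^{S^e}_n(S,S)$ (homology), whereas $e_S$ generates $\Ext^n_{S^e}(S,S^e)$ (cohomology), and there is no direct ``identification'' carrying one Lie-derivative action to the other. The two are related through the fundamental class $c_S\in\mathrm{Tor}_n^{S^e}(S,\Ext^n_{S^e}(S,S^e))$: one needs $\mathcal L_\partial(c_S)=0$, which rests on the Lie-derivative equivariance of the contraction (part (3) of Lemma~\ref{sec:lie-derivatives-1}), and then the computation in Proposition~\ref{sec:particular-case-van}(2) yields $\mathcal L_\partial(e_S)=-\mathrm{div}(\partial)\,e_S$, with a minus sign. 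This is a substantive lemma, not a convention to be threaded through, and your proposal contains no argument for it.

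The sign matters for the statement itself. With the paper's conventions, $\alpha\cdot(\varphi_L\otimes e_S)=\bigl(-\lambda_L(\alpha)+\lambda_S(\alpha)\bigr)(\varphi_L\otimes e_S)$ where $\alpha\cdot e_S=\lambda_S(\alpha)e_S$, and applying $F$ gives $\nu(\alpha)=\alpha+\lambda_L(\alpha)-\lambda_S(\alpha)$; only the identity $\lambda_S=-\mathrm{div}$ produces the claimed $\nu(\alpha)=\alpha+\lambda_L(\alpha)+\mathrm{div}(\partial_\alpha)$. If you feed in your value $\alpha\cdot e_S=+\mathrm{div}(\partial_\alpha)e_S$, the same computation outputs $\nu(\alpha)=\alpha+\lambda_L(\alpha)-\mathrm{div}(\partial_\alpha)$, which is not the theorem (and your intermediate claim that the generator is scaled by $\lambda_L(\alpha)+\mathrm{div}(\partial_\alpha)$ is inconsistent with the formula $\alpha\cdot(\varphi\otimes e)=-\varphi\cdot\alpha\otimes e+\varphi\otimes\alpha\cdot e$ you quote just before it). So the missing ingredient is precisely Proposition~\ref{sec:particular-case-van}(2): the comparison, via the fundamental class, between the Lie-derivative action on $\Ext^n_{S^e}(S,S^e)$ and the divergence defined on $\mathrm{Tor}_n^{S^e}(S,S)$. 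The remaining points you sketch (well-definedness of $\nu$ via the Leibniz-type identities for $\lambda_L$ and $\lambda_S$, corresponding to Lemmas~\ref{sec:trace-maps-lambda_l} and \ref{sec:an-automorphism-u}, and independence of the choice of generators) are fine and match the paper.
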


Among all Lie-Rinehart algebras, those arising from Poisson structures
on $S$ play a special role because of the connection to Poisson
(co)homology. Recall that any $R$-bilinear Poisson bracket $\{-,-\}$
on $S$ defines a Lie-Rinehart algebra structure on
$(S,L) = (S,\Omega_{S/R})$ such that, for all $s,t\in S$,
\begin{itemize}
\item $\partial_{ds} = \{s,-\}$;
\item $[ds,dt] = d\{s,t\}$.
\end{itemize}
In this case, the formulations of Theorems~\ref{sec:introduction} and
\ref{sec:introduction-1} simplify because, when $\Omega_{S/R}$ is
projective with constant rank $n$ as an $S$-module, the right
$U$-module structure of $\Lambda^n_S\Omega_{S/R}^\vee$ (see
\eqref{eq:67}) is given by classical Lie derivatives, that is, for all
$s\in S$,
\begin{equation}
  \label{eq:41}
  \lambda_{ds}(\varphi) = \mathcal L_{\{s,-\}}(\varphi)\,.
\end{equation}
More precisely, these theorems specialise as follows.
\begin{cor}
  \label{sec:main-results-1}
  Let $R$ be a Noetherian ring. Let $(S,\{-,-\})$ be a finitely
  generated Poisson algebra over $R$. Denote by $U$ the enveloping
  algebra of the associated Lie Rinehart algebra
  $(S,\Omega_{S/R})$. Assume that
  \begin{itemize}
  \item $S$ is projective in $\Mod(R)$;
  \item $S\in \mathrm{per}(S^e)$;
  \item $\Omega_{S/R}$, which is then projective in $\Mod(S)$, has
    constant rank $n$.
  \end{itemize}
  Then, $U$ has Van den Bergh duality in dimension $2n$ and there is
  an isomorphism of $U$-bimodules
  \[
  \Ext^{2n}_{U^e}(U,U^e) \simeq U \otimes_S \Lambda^n_S\mathrm{Der}_R(S)
  \otimes_S \Lambda^n_S\mathrm{Der}_R(S)\,,
  \]
  where the right-hand side term is a left $U$-module in a natural way
  and a right $U$-module such that, for all $u\in U$, $\varphi,\varphi'\in
  \Lambda^n_S\mathrm{Der}_R(S)$ and $s\in S$,
  \[
  (u \otimes \varphi \otimes \varphi')\cdot ds = u\,ds \otimes \varphi \otimes
  \varphi'
  -  u\otimes (\mathcal L_{\{s,-\}}(\varphi)\otimes \varphi'+ \varphi
  \otimes \mathcal L_{\{s,-\}}(\varphi'))\,.
  \]
  In particular, if $S$ has a volume form, then $U$ is skew Calabi-Yau
  with a Nakayama automorphism $\nu \colon U \to U$ such that, for all
  $s\in S$,
  \[
  \left\{
    \begin{array}{rcl}
      \nu(s) & = & s \\
      \nu(ds) & = & ds + 2\,\mathrm{div}(\{s,-\})\,,
    \end{array}
    \right.
  \]
  where $\mathrm{div}$ is the divergence of the chosen volume form.
\end{cor}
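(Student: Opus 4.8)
The plan is to obtain Corollary~\ref{sec:main-results-1} by specialising Theorems~\ref{sec:introduction} and~\ref{sec:introduction-1} to the Lie-Rinehart algebra $(S,L)=(S,\Omega_{S/R})$ attached to the Poisson bracket, with $d=n$, and then rewriting the general descriptions using the Poisson-specific identities $\partial_{ds}=\{s,-\}$, $[ds,dt]=d\{s,t\}$ and \eqref{eq:41}.

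First I would check the hypotheses. Since $R$ is Noetherian, $S$ is a finitely generated $R$-algebra which is projective, hence flat, over $R$, and $S\in\mathrm{per}(S^e)$; by the results gathered in Section~\ref{sec:part-case-calabi}, these conditions force $\Omega_{S/R}$ to be finitely generated projective as an $S$-module (of constant rank $n$ by assumption), force $S$ to have Van den Bergh duality in dimension $n$, and yield an isomorphism of $S$-bimodules $\Ext^n_{S^e}(S,S^e)\simeq\Lambda^n_S\mathrm{Der}_R(S)$ which is compatible with the actions by Lie derivatives. Thus the three bullet points of Theorem~\ref{sec:introduction} hold for $(S,\Omega_{S/R})$, and that theorem yields that $U$ has Van den Bergh duality in dimension $n+n=2n$ together with an isomorphism of $U$-bimodules
\[
\Ext^{2n}_{U^e}(U,U^e)\simeq F\bigl(\Lambda^n_S\Omega_{S/R}^\vee\otimes_S\Ext^n_{S^e}(S,S^e)\bigr)\,.
\]
Since $\Omega_{S/R}$ is projective of finite rank, $\Lambda^n_S\Omega_{S/R}^\vee\simeq\Lambda^n_S\mathrm{Der}_R(S)$; combining this with the isomorphism for $\Ext^n_{S^e}(S,S^e)$ rewrites the right-hand side as $F(N)$ with $N=\Lambda^n_S\mathrm{Der}_R(S)\otimes_S\Lambda^n_S\mathrm{Der}_R(S)$, and, as a left $U$-module, $F(N)=U\otimes_SN=U\otimes_S\Lambda^n_S\mathrm{Der}_R(S)\otimes_S\Lambda^n_S\mathrm{Der}_R(S)$.

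Next I would identify the two $U$-module structures. For the left $U$-module structure on $N$, the recipe recalled just before Theorem~\ref{sec:introduction} gives $\alpha\cdot(\varphi\otimes e)=-\varphi\cdot\alpha\otimes e+\varphi\otimes\alpha\cdot e$; taking $\alpha=ds$ and using $\varphi\cdot ds=-\lambda_{ds}(\varphi)=-\mathcal L_{\{s,-\}}(\varphi)$ by \eqref{eq:67} and \eqref{eq:41}, together with $\alpha\cdot e=\mathcal L_{\partial_{ds}}(e)=\mathcal L_{\{s,-\}}(e)$, I obtain
\[
ds\cdot(\varphi\otimes e)=\mathcal L_{\{s,-\}}(\varphi)\otimes e+\varphi\otimes\mathcal L_{\{s,-\}}(e)\,,
\]
which is the natural (diagonal) left $U$-module structure on $\Lambda^n_S\mathrm{Der}_R(S)\otimes_S\Lambda^n_S\mathrm{Der}_R(S)$. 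The right $U$-module structure on $F(N)=U\otimes_SN$ is, by definition of $F$, $(u\otimes\bar n)\cdot ds=u\,ds\otimes\bar n-u\otimes ds\cdot\bar n$; substituting the formula just obtained for $ds\cdot\bar n$ gives exactly the displayed formula for $(u\otimes\varphi\otimes\varphi')\cdot ds$ in the statement. This proves the first assertion.

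For the final assertion, suppose in addition that $S$ has a volume form $\omega_S$, that is, a free generator of the invertible $S$-module $\Lambda^n_S\Omega_{S/R}$. Then $\Lambda^n_SL=\Lambda^n_S\Omega_{S/R}$ is free, and, since $\Ext^n_{S^e}(S,S^e)\simeq\Lambda^n_S\mathrm{Der}_R(S)\simeq(\Lambda^n_S\Omega_{S/R})^\vee$, it too is free, so $S$ is Calabi-Yau in dimension $n$; together with the projectivity of $\Omega_{S/R}$ of constant rank $n$ established above, this gives all the hypotheses of Theorem~\ref{sec:introduction-1}. That theorem provides a Nakayama automorphism $\nu$ with $\nu(s)=s$ and $\nu(ds)=ds+\lambda_L(ds)+\mathrm{div}(\partial_{ds})$, valid for any trace mapping $\lambda_L$ and any divergence. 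I would take the divergence of the chosen $\omega_S$, so that $\mathrm{div}(\partial_{ds})=\mathrm{div}(\{s,-\})$ because $\partial_{ds}=\{s,-\}$, and I would take $\varphi_L$ to be the basis element of $\Lambda^n_S\mathrm{Der}_R(S)=(\Lambda^n_S\Omega_{S/R})^\vee$ dual to $\omega_S$. Differentiating the pairing $\langle\omega_S,\varphi_L\rangle=1$ along $\{s,-\}$, using compatibility of the classical Lie derivative with the duality pairing between $\Lambda^n_S\Omega_{S/R}$ and $\Lambda^n_S\mathrm{Der}_R(S)$ together with $\mathcal L_{\{s,-\}}(\omega_S)=\mathrm{div}(\{s,-\})\omega_S$, I get $\mathcal L_{\{s,-\}}(\varphi_L)=-\mathrm{div}(\{s,-\})\varphi_L$; hence by \eqref{eq:67} and \eqref{eq:41}, $\varphi_L\cdot ds=-\lambda_{ds}(\varphi_L)=-\mathcal L_{\{s,-\}}(\varphi_L)=\mathrm{div}(\{s,-\})\varphi_L$, that is, $\lambda_L(ds)=\mathrm{div}(\{s,-\})$. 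Therefore $\nu(ds)=ds+2\,\mathrm{div}(\{s,-\})$, as claimed. I do not expect any genuinely new difficulty here beyond Theorems~\ref{sec:introduction} and~\ref{sec:introduction-1}; the point requiring most care is the bookkeeping — checking that $\Lambda^n_S\Omega_{S/R}^\vee$, $\Lambda^n_S\mathrm{Der}_R(S)$ and $\Ext^n_{S^e}(S,S^e)$ are matched up compatibly with all the Lie-derivative actions in play, and that the trace mapping attached to $\varphi_L$ indeed coincides with the divergence of $\omega_S$ in the way described.
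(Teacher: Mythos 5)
Your proposal is correct and follows essentially the same route as the paper: the hypotheses of Theorems~\ref{sec:introduction} and~\ref{sec:introduction-1} are verified via Proposition~\ref{sec:relat-regul} (this, rather than Section~\ref{sec:part-case-calabi}, is where projectivity of $\Omega_{S/R}$ and the Van den Bergh duality of $S$ are established) together with Proposition~\ref{sec:particular-case-van} and \eqref{eq:41}, and the two theorems are then specialised to $(S,\Omega_{S/R})$ with $d=n$. Your explicit check that the trace of the generator of $\Lambda^n_S\mathrm{Der}_R(S)$ dual to $\omega_S$ equals $\mathrm{div}(\{s,-\})$, which produces the factor $2$, is precisely the computation the paper leaves implicit in ``the corollary follows from Theorems~\ref{sec:introduction} and~\ref{sec:introduction-1}''.
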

For the case where $R=\mathbb C$ and $S$ is finitely generated as a
$\mathbb C$-algebra, the above corollary is announced in \cite[Theorem
0.7, Corollary 0.8]{MR3687261} using the main results of
\cite{MR1718638}.

This article is structured as
follows. Section~\ref{sec:poincare-duality-s-1} presents useful
information on the case where $S$ has Van den Bergh
duality. Section~\ref{sec:material-u-bimodules} is devoted to
technical lemmas on $U$-(bi)modules. In particular, it presents the
above mentioned functor $F$ and its right adjoint $G$, which play an
essential role in the proof of the main
results. Section~\ref{sec:left-u-module} introduces the action of $L$
on $\Ext^\bullet_{S^e}(S,S^e)$ by Lie derivatives. This structure is
used in Section~\ref{sec:proof-main-theorem} in order to describe
$\Ext^\bullet_{U^e}(U,U^e)$ and prove Theorem~\ref{sec:introduction},
Theorem~\ref{sec:introduction-1} and
Corollary~\ref{sec:main-results-1}. Finally, Section~\ref{sec:example}
applies this corollary to a class of examples of Nambu-Poisson
surfaces.

\section{Poincaré duality for \texorpdfstring{$S$}{S}}
\label{sec:poincare-duality-s-1}

As proved in \cite{MR1443171} when $R$ is field, if $S$ has Van den
Bergh duality in dimension $n$, then there is a functorial
isomorphism, for all $S$-bimodules $N$,
\[
\Ext^\bullet_{S^e}(S,N)\simeq \mathrm{Tor}_{n-\bullet}^{S^e}(S,\Ext^n_{S^e}(S,S^e)
\otimes _S N)\,.
\]
It is direct to check that this is still the case
without assuming that $R$ is a field. In view of the proof of the main
results of the article, \ref{sec:fund-class-contr} relates the
above mentioned isomorphism to the fundamental class of $S$,
following \cite{MR2670971}, and \ref{sec:poincare-duality-s} relates
Van den Bergh duality to the regularity of commutative algebras,
following \cite{MR2395770}.

\subsection{Fundamental class and contraction}
\label{sec:fund-class-contr}

Consider a projective resolution $P^\bullet$ in $\Mod(S^e)$,
\[
\cdots \to P^{-2} \to P^{-1} \to P^0 \xrightarrow{\epsilon} S\,,
\]
and let $p^0\in P^0$ be such that $\epsilon(p_0)=1_S$. For all
$M,N\in \Mod(S^e)$ and $n\in \mathbb N$, define the contraction
\[
\begin{array}{ccc}
  \mathrm{Tor}_n^{S^e}(S,M) \times \Ext^n_{S^e}(S,N) & \to & \mathrm{Tor}_0^{S^e}(S,M\otimes_SN) \\
  (\omega,e) & \mapsto & \iota_e(\omega)
\end{array}
\]
as the mapping induced by the following one
\[
\begin{array}{ccc}
  M \otimes_{S^e}P^{-n}
  & \to &
          \Hom_R(\Hom_{S^e}(P^{-n},N) , (M\otimes_S
          N)\otimes_{S^e}P^0) \\
  x \otimes p & \mapsto & 
                          (\varphi \mapsto (x\otimes \varphi(p))
                          \otimes p^0)\,.
\end{array}
\]
This makes sense because $P^\bullet$ is concentrated in nonpositive
degrees. The construction depends neither on the choice of $p^0$ nor
on that of $P^\bullet$.

Following the proof of \cite[Proposition 3.3]{MR2670971}, when $S\in
\mathrm{per}(S^e)$ and $n$ is taken equal to $\mathrm{pd}_{S^e}(S)$, the
contraction induces an isomorphism for all $N\in \Mod(S^e)$,
\[
\begin{array}{rrcl}
  \mathrm{Tor}_n^{S^e}(S,\Ext^n_{S^e}(S,S^e)) & \to & \Hom_{S^e}(\Ext^n_{S^e}(S,N), \mathrm{Tor}_0^{S^e}(S,\Ext^n_{S^e}(S,S^e)\otimes_SN)) \\
  \omega & \mapsto & \iota_?(\omega)
\end{array}
\]
In the particular case $N=S^e$, the \emph{fundamental class} of $S$ is
the element $c_S\in \mathrm{Tor}_n^{S^e}(S,\Ext^n_{S^e}(S,S^e))$ such
that
\[
(\iota_?(c_S))_{|\Ext^n_{S^e}(S,S^e)} = \mathrm{Id}_{\Ext^n_{S^e}(S,S^e)}\,.
\]

Following the arguments in the proof of \cite[Théorème
4.2]{MR2670971}, when $S$ has Van den Bergh duality in dimension
$n$, which gives that $n=\mathrm{pd}_{S^e}(S)$, the contraction with
$c_S$ induces an isomorphism, for all $N\in \Mod(S^e)$,
\begin{equation}
  \label{eq:30}
  \iota_?(c_S) \colon \Ext^n_{S^e}(S,N) \xrightarrow{\sim} \mathrm{Tor}_0^{S^e}(S,\Ext^n_{S^e}(S,S^e)\otimes_SN)\,.
\end{equation}

When $S$ is projective in $\Mod(R)$, the Hochschild complex
$S^{\otimes \bullet+2}$ is a resolution of $S$ in $\Mod(S^e)$ and the
contraction
\[
\begin{array}{ccc}
  \mathrm{Tor}_n^{S^e}(S,M) \times \Ext^m_{S^e}(S,N) & \to & \mathrm{Tor}_{n-m}^{S^e}(S,M\otimes_SN) \\
  (\omega,e) & \mapsto & \iota_e(\omega)
\end{array}
\]
may be defined for all $M,N\in \Mod(S^e)$ and $m,n\in \mathbb N$, as
the mapping induced at the level of Hochschild (co)chains by
\[
\begin{array}{ccc}
  M \otimes S^{\otimes n} \times \Hom_R(S^{\otimes m},N)
  & \to &
          (M\otimes_S N) \otimes S^{\otimes (m-n)} \\
  ((x|s_1|\cdots|s_n),\psi) & \mapsto & (x \otimes
                                        \psi(s_1|\cdots|s_m)|s_{m+1}|\cdots|s_n)\,. 
\end{array}
\]
When, in addition, $S$ has Van den Bergh duality in dimension $n$,
then \cite[Théorème 4.2]{MR2670971} asserts that the following mapping
given by contraction with $c_S$ is an isomorphism, for all
$N\in \Mod(S^e)$ and $m\in \mathbb N$,
\[
\iota_?(c_S) \colon \Ext^m_{S^e}(S,N) \to \mathrm{Tor}_{n-m}^{S^e}(S,\Ext^n_{S^e}(S,S^e) \otimes_S N)\,.
\]

\subsection{Relationship to regularity}
\label{sec:poincare-duality-s}

The main results of this article assume that $S$ has Van den Bergh
duality. For commutative algebras, this property is related to
smoothness and regularity. The relationship is detailed in
\cite{MR2395770} for the case where $R$ is a perfect field and is
summarised below in the present setting.
\begin{prop}[{\cite{MR2395770}}]
  \label{sec:relat-regul}
  Let $R$ be a Noetherian commutative ring. Let $S$ be a finitely
  generated commutative $R$-algebra and projective as an
  $R$-module. Let $n \in \mathbb N$. The following properties are
  equivalent.
  \begin{enumerate}[(i)]
  \item $S$ has Van den Bergh duality in dimension $n$.
  \item $\mathrm{gl.dim}(S^e)<\infty$ and $\Omega_{S/R}$, which is then
    projective in $\Mod(S)$, has constant rank $n$.
  \end{enumerate}
  When these properties are true, $\mathrm{gl.dim}(S)<\infty$ and
  $\mathrm{Ext}^n_{S^e}(S,S^e)\simeq \Lambda^n_S\mathrm{Der}_R(S)$ as $S$-modules.
\end{prop}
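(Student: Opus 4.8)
The plan is to identify each of conditions (i) and (ii) with the single geometric statement that $S$ is smooth over $R$ of constant relative dimension $n$, and then to extract the description of $\Ext^n_{S^e}(S,S^e)$ from a Koszul resolution of the diagonal. Throughout, I would use that $S$, being finitely generated and $R$-projective over the Noetherian ring $R$, is itself Noetherian and $R$-flat, so that $S^e=S\otimes_RS$ is Noetherian and $R$-flat, the multiplication map $\mu\colon S^e\to S$ exhibits $S$ as a cyclic $S^e$-module, and $\Ext_{S^e}(S,-)$ and $\mathrm{Tor}^{S^e}(S,-)$ compute Hochschild cohomology and homology.

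First I would establish that (ii) is equivalent to ``$S$ is smooth over $R$ of constant relative dimension $n$''. Since $S$ is flat and of finite presentation over $R$, the fibrewise (Jacobian) criterion for smoothness shows that $S$ is smooth over $R$ of relative dimension $n$ if and only if $\Omega_{S/R}$ is finitely generated projective of constant rank $n$ over $S$; in that case $S^e$ is also smooth over $R$, so $\mathrm{gl.dim}(S^e)<\infty$ --- here one invokes that $R$ has finite global dimension, which holds in the intended applications (for instance $R$ a field), and in general one localises at the primes of $R$. Conversely, $\mathrm{gl.dim}(S^e)<\infty$ forces $\mathrm{pd}_{S^e}(S)<\infty$, and combined with the projectivity of $\Omega_{S/R}$ and the $R$-flatness of $S$ this yields smoothness, the hypothesis on the rank giving the relative dimension.

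Next, (ii) $\Rightarrow$ (i) together with the last assertion. Set $I=\ker(\mu)$; the conormal module $I/I^2$ is canonically $\Omega_{S/R}$, hence projective of constant rank $n$, and smoothness of $S$ over $R$ makes $\mu$ a regular immersion of codimension $n$. Therefore, Zariski-locally on $\mathrm{Spec}(S^e)$ along the diagonal, $I$ is generated by a regular sequence of length $n$, whose Koszul complex is a length-$n$ resolution of $S$ by finitely generated projective $S^e$-modules; these local complexes depend only on $I/I^2$, so they patch and give $S\in\mathrm{per}(S^e)$ with $\mathrm{pd}_{S^e}(S)=n$. Applying $\Hom_{S^e}(-,S^e)$ to this complex, the self-duality of Koszul complexes (up to a twist by the top exterior power of the conormal module) yields $\Ext^\bullet_{S^e}(S,S^e)=0$ for $\bullet\neq n$ and a canonical isomorphism $\Ext^n_{S^e}(S,S^e)\simeq\Lambda^n_S\Hom_S(I/I^2,S)\simeq\Lambda^n_S\mathrm{Der}_R(S)$. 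Being the top exterior power of a rank-$n$ projective, this is an invertible $S$-module, so $S$ has Van den Bergh duality in dimension $n$ with the stated inverse dualising bimodule; moreover $S$ is then regular, so $\mathrm{gl.dim}(S)<\infty$.

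Finally, (i) $\Rightarrow$ (ii). From $S\in\mathrm{per}(S^e)$ one has $\mathrm{pd}_{S^e}(S)<\infty$, and since $\Ext^\bullet_{S^e}(S,S^e)$ is concentrated in degree $n$, the minimal-resolution (Nakayama) argument over the Noetherian ring $S^e$ --- the projective dimension of a finitely generated module equals the largest degree in which $\Ext^\bullet(-,S^e)$ is nonzero --- gives $\mathrm{pd}_{S^e}(S)=n$. For each $\mathfrak p\in\mathrm{Spec}(R)$, base changing a finite $S^e$-projective resolution of $S$ along $R\to k(\mathfrak p)$ remains a resolution because $S$ and the $S^e$-projectives are $R$-flat, so $\mathrm{pd}_{\overline{S}^e}(\overline{S})\leq n$ for the fibre $\overline{S}=S\otimes_Rk(\mathfrak p)$; by the field case of \cite{MR2395770} (applied over $\overline{k(\mathfrak p)}$ to cover imperfect residue fields) $\overline{S}$ is geometrically regular over $k(\mathfrak p)$. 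Flatness of $S$ over $R$ and regularity of all fibres then force $S$ smooth over $R$, so $\Omega_{S/R}$ is projective over $S$, and the vanishing/nonvanishing pattern of $\Ext^\bullet_{S^e}(S,S^e)$ --- computed as $\Lambda^\bullet_S\mathrm{Der}_R(S)$ on each connected component via the previous step --- forces its rank to be the constant $n$, giving (ii). The principal obstacle is precisely this last transfer, from homological smoothness of $S$ as an $S^e$-module to geometric smoothness of $S$ over $R$, and its coherent handling of base change to fibres; the remaining steps are formal once the fibrewise smoothness criterion, the regular-immersion property of smooth morphisms, and Koszul self-duality are granted.
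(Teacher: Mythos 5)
Your plan pivots on identifying each of (i) and (ii) with ``$S$ is smooth over $R$ of constant relative dimension $n$'', and this is where it has genuine gaps. First, smoothness does not yield $\mathrm{gl.dim}(S^e)<\infty$ at the stated level of generality: if $R$ is a non-regular Artinian local ring and $S=R$, then $S$ is even \'etale over $R$ with $\Omega_{S/R}=0$ projective of constant rank $0$, yet $S^e=R$ has infinite global dimension; your proposed repairs (assuming $\mathrm{gl.dim}(R)<\infty$, or ``localising at the primes of $R$'') either add a hypothesis the proposition does not contain or do nothing, since localisation does not make a global dimension finite. Concretely, in your (i)$\Rightarrow$(ii) paragraph the half of (ii) asserting $\mathrm{gl.dim}(S^e)<\infty$ is never established: you end with ``$S$ smooth and $\Omega_{S/R}$ projective of constant rank $n$'' and stop. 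The paper never routes through smoothness of the morphism: it obtains $\mathrm{gl.dim}(S^e)<\infty$ directly from $S\in\mathrm{per}(S^e)$ via $\mathrm{pd}_{(S^e)^e}(S^e)\leqslant 2\,\mathrm{pd}_{S^e}(S)$ (Cartan--Eilenberg) together with the bar-resolution bound, and that is the step your outline is missing. Second, your opening form of the fibrewise criterion is not correct: for a flat, finitely presented algebra, projectivity of $\Omega_{S/R}$ of constant rank $n$ does not imply smoothness of relative dimension $n$ (inseparable examples such as $k(t)[x]/(x^p-t)$ over $k(t)$ in characteristic $p$ have free $\Omega_{S/R}$ of rank one and are not smooth); one must compare with the fibre dimension or argue through geometric regularity of the fibres, so your sentence deriving smoothness from ``$\mathrm{pd}_{S^e}(S)<\infty$ plus $\Omega_{S/R}$ projective plus flatness'' is exactly the hard implication, asserted rather than proved (you only substantiate it later, in the (i) direction, via base change to fibres and the field case of the reference).

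Beyond these points your route is genuinely different from the paper's. The paper works purely locally at primes of the diagonal: from $\mathrm{gl.dim}(S^e)<\infty$ and projectivity of $\Omega_{S/R}$ (quoted from Hochschild--Kostant--Rosenberg) it shows that $\mathrm{Ker}(\mu)_{\mathfrak p}$ is generated by a regular sequence of length $d=\mathrm{rk}_{\mathfrak p}\,\Omega_{S/R}$, computes $\Ext^\bullet_{S^e}(S,S^e)_{\mathfrak p}$ from the Koszul resolution, deduces the equivalence, and then identifies $\Ext^n_{S^e}(S,S^e)\simeq\Lambda^n_S\mathrm{Der}_R(S)$ not by Koszul self-duality and patching (local resolutions do not literally glue; only the canonical isomorphisms do, a point you pass over) but by combining Van den Bergh's duality theorem with $\mathrm{Tor}_n^{S^e}(S,S)\simeq\Lambda^n_S\Omega_{S/R}$. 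If you obtain $\mathrm{gl.dim}(S^e)<\infty$ from $S\in\mathrm{per}(S^e)$ as the paper does rather than from smoothness, and replace the misquoted Jacobian criterion by the geometric-regularity-of-fibres criterion you in fact use later, your fibrewise argument and Koszul computation can be made to work; as written, however, the proposal does not prove the equivalence.
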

\begin{proof} See \cite{MR2395770} for full details.
  Since $S$ is projective over $R$, then
  $\mathrm{pd}_{(S^e)^e}(S^e)\leqslant 2\,\mathrm{pd}_{S^e}(S)$
  (\cite[Chap. IX, Proposition 7.4]{MR0077480}); besides, using the
  Hochschild resolution of $S$ in $\Mod(S^e)$ yields that
  \[
  \mathrm{gl.dim}(S)\leqslant \mathrm{pd}_{S^e}(S) \leqslant
  \mathrm{gl.dim}(S^e)\,;
  \]
   thus
  \begin{equation}
    \label{eq:57}
    \begin{array}{rcl}
      S \in \mathrm{per}(S^e) & \Leftrightarrow & \mathrm{gl.dim}(S^e)<\infty \\
                           & \Rightarrow & \mathrm{gl.dim}(S) <\infty\,.
    \end{array}
  \end{equation}
  Note also that, following \cite[Theorem 3.1]{MR0142598},
  \begin{equation}
    \label{eq:58}
    \mathrm{gl.dim}(S^e)<\infty \Rightarrow \text{$\Omega_{S/R}$ is projective
      in $\Mod(S)$.}
  \end{equation}
  
  Denote by $\mu$ the multiplication mapping $S\otimes S \to S$.
  Assume $\mathrm{gl.dim}(S^e)<\infty$, let
  $\mathfrak p \in \mathrm{Spec}(S)$ ($\subseteq \mathrm{Spec}(S^e)$)
  and denote by $d$ the rank of $(\Omega_{S/R})_{\mathfrak p}$. Since
  $\Omega_{S/R}\simeq \mathrm{Ker}(\mu)/\mathrm{Ker}(\mu)^2$ as
  modules over $S$ ($\simeq S^e/\mathrm{Ker}(\mu)$) and
  $\mathrm{gl.dim}(S^e)<\infty$, the $(S^e)_{\mathfrak p}$-module
  $\mathrm{Ker}(\mu)_{\mathfrak p}$ is generated by a regular sequence
  having $d$ elements. There results a Koszul resolution of
  $S_{\mathfrak p}$ in $\Mod((S^e)_{\mathfrak p})$. Using this
  resolution and the isomorphism
  $\Ext^\bullet_{S^e}(S,S^e)_{\mathfrak p} \simeq
  \Ext_{(S^e)_{\mathfrak p}}^\bullet (S_{\mathfrak p},
  (S^e)_{\mathfrak p})$
  in $\Mod((S^e)_{\mathfrak p})$ yields isomorphisms of
  $(S^e)_{\mathfrak p}$-modules
  \begin{equation}
    \label{eq:59}
    \Ext^\bullet_{S^e}(S,S^e)_{\mathfrak p} \simeq
    \left\{
      \begin{array}{ll}
        0 & \text{if $\bullet \neq d$} \\
        S_{\mathfrak p} & \text{if $\bullet = d$.}
      \end{array}
    \right.
  \end{equation}

  \medskip

  Now assume $(i)$. Then, $\mathrm{gl.dim}(S^e)<\infty$ (see
  (\ref{eq:57})), $\Omega_{S/R}$ is projective (see \ref{eq:58}) and
  has constant rank $n$ (see (\ref{eq:59})). Conversely, assume that
  $\mathrm{gl.dim}(S^e)<\infty$ and $\Omega_{S/R}$ has constant rank
  $n$. Then, $S\in \mathrm{per}(S^e)$ (see (\ref{eq:57})) and the
  $S$-module (equivalently, the symmetric $S$-bimodule)
  $\Ext^\bullet_{S^e}(S,S^e)$ is zero if $\bullet \neq n$ and is
  invertible if $\bullet = n$ (see (\ref{eq:59})). Thus,
  \[
  (i) \Leftrightarrow (ii)\,.
  \]

  \medskip

  Finally, assume that both $(i)$ and $(ii)$ are true. Then, $\mathrm{gl.dim}(S) <\infty$ (see (\ref{eq:57})). Moreover, Van den Bergh
  duality \cite[Theorem 1]{MR1443171} does apply here and provides an
  isomorphism of $S$-modules
  \[
  \Ext^0_{S^e}(S,\Ext^n_{S^e}(S;S^e)^{-1}) \simeq \mathrm{Tor}_n^{S^e}(S,S)\,,
  \]
  whereas \cite[Theorem 3.1]{MR0142598} yields an isomorphism of
  $S$-modules
  \[
  \mathrm{Tor}_n^{S^e}(S,S) \simeq \Lambda^n_S\Omega_{S/R}\,.
  \]
  Thus, $\Ext^n_{S^e}(S,S^e) \simeq \Lambda^n_S\mathrm{Der}_R(S)$ in $\Mod(S)$.
\end{proof}

\section{Material on \texorpdfstring{$U$}{U}-(bi)modules}
\label{sec:material-u-bimodules}

The purpose of this section is to introduce an adjoint pair of
functors $(F,G)$ between $\Mod(U)$ and $\Mod(U^e)$. In the proof of
Theorem~\ref{sec:introduction}, the $U$-bimodule
$\Ext^\bullet_{U^e}(U,U^e)$ is described as the image under $F$ of a
certain left $U$-module which is invertible as an $S$-module. This
section develops the needed properties of $F$. Hence,
\ref{sec:basic-constr-u} recalls the basic constructions of
$U$-modules; \ref{sec:funct-gcol-modu-1} and
\ref{sec:functor-fcolon-modu} introduce the functors $G$ and $F$,
respectively; \ref{sec:adjunction-between-f} proves that $(F,G)$ is
adjoint; \ref{sec:10} introduces and collects basic properties of
compatible left $S\rtimes L$-modules, these are applied in
Section~\ref{sec:left-u-module} to the action of $L$ on
$\Ext^\bullet_{S^e}(S,S^e)$ by Lie derivatives; and
\ref{sec:invert-u-bimod-1} proves that the functor $F$ transforms left
$U$-modules that are invertible as $S$-modules into invertible
$U$-bimodules. These results are based on the description of $U$ as a
quotient of the smash-product $S\rtimes L$ given in the following
lemma. This description is established in \cite[Proposition
2.10]{MR3659329} in the case of Lie-Rinehart algebras arising from
Poisson algebras.
\setcounter{lem}{0}
\begin{lem}
  \label{sec:cont-notat-used}
  The algebra $S\rtimes L$ has the following properties.
  \begin{enumerate}
  \item The identity mappings $\mathrm{Id}_S\colon S\to S$ and $\mathrm{Id}_L\colon L \to L$ induce an isomorphism of $R$-algebras
    \begin{equation}
      \label{eq:2}
      (S\rtimes L) / I \to U\,,
    \end{equation}
    where $I$ is the two-sided ideal of the smash-product algebra $S
    \rtimes L$ generated by
    \[
    \{s \otimes \alpha - 1 \otimes s\alpha\ |\ s\in S\,,\alpha\in
    L\}\,.
    \]
  \item If $L$ is projective as a left $S$-module, then $U$ is
    projective both as a left and as a right $S$-module.
  \end{enumerate}
\end{lem}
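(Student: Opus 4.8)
The plan is to work directly with the smash-product algebra $S\rtimes L$, whose underlying $R$-module is $S\otimes_R\mathcal U_R(L_R)$... no, more precisely one uses the standard description of $S\rtimes L$ coming from the action of the $R$-Lie algebra $L$ on $S$ by derivations: as an $R$-module it is $S\otimes_R\mathcal U_R(L)$ (viewing $L$ merely as an $R$-Lie algebra), with multiplication twisted by the derivation action of $L$ on $S$. The first thing I would do is recall Rinehart's original construction of $U$ from \cite{MR0154906}: $U$ is the quotient of $\mathcal U_R(S\oplus L)$ — where $S\oplus L$ carries a natural $R$-Lie algebra structure — by the relations identifying the image of $s\in S$ with the image of $s\in\mathcal U_R(S)$ acting by multiplication and identifying $s\cdot\alpha$ (product in $U$) with $s\alpha\in L$ (the $S$-module structure). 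For part (1), the point is that both $S\rtimes L$ and $U$ receive canonical algebra maps from the tensor algebra on $S\oplus L$, and one checks that the kernel of the map to $U$ is generated, as a two-sided ideal, by the image of the relations cutting out $S\rtimes L$ together with the extra elements $s\otimes\alpha-1\otimes s\alpha$. Concretely: define an $R$-algebra map $S\rtimes L\to U$ using $\mathrm{Id}_S$ and $\mathrm{Id}_L$ (one must verify it respects the smash-product relations $\alpha\cdot s - s\cdot\alpha = \partial_\alpha(s)$, which holds in $U$ by definition of the enveloping algebra), observe it is surjective since $S$ and $L$ generate $U$, and then identify its kernel with $I$ by comparing presentations. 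The map in the other direction, $U\to(S\rtimes L)/I$, is built from the universal property of $U$: one checks that $s\mapsto\overline{s\otimes 1}$, $\alpha\mapsto\overline{1\otimes\alpha}$ defines a morphism of Lie-Rinehart modules into $(S\rtimes L)/I$ and hence extends to $U$; the two composites are identities because they are so on generators.

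For part (2), the plan is to exhibit an explicit $S$-basis-like structure on $U$ coming from a PBW-type theorem. Rinehart's theorem (\cite{MR0154906}) states that when $L$ is projective as an $S$-module, the associated graded of $U$ for the natural filtration (by ``order of the differential operator'', i.e. by the degree in $L$) is isomorphic to the symmetric $S$-algebra $\mathrm{Sym}_S(L)$; since $L$ is projective over $S$, so is each $\mathrm{Sym}^k_S(L)$, hence $\mathrm{gr}(U)$ is projective as a left $S$-module, and therefore $U$ itself is projective as a left $S$-module (the filtration is by left $S$-submodules and the graded pieces are projective, so the filtration splits as left $S$-modules). The right-module statement can be handled the same way using the opposite filtration, or — more cheaply — by noting that the principal anti-automorphism or the symmetry of the PBW description gives a right $\mathrm{Sym}_S(L)$-module structure on $\mathrm{gr}(U)$ that is again projective; alternatively one invokes that $U$ is generated over $S$ on the right as well (part (1) already records that the relation set generates $I$ as a \emph{right} ideal, per \cite{MR1696093}), so the same filtration argument applies on the right.

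The main obstacle I anticipate is purely bookkeeping rather than conceptual: making the comparison of presentations in part (1) watertight, since $S\rtimes L$ is built from $L$ \emph{as an $R$-Lie algebra acting on $S$} while the $S$-module structure of $L$ only enters through the ideal $I$, so one must be careful that the relation $[\alpha,s\beta]=s[\alpha,\beta]+\partial_\alpha(s)\beta$ (part of the Lie-Rinehart axioms) is exactly what is needed for the map $S\rtimes L\to U$ to be well defined and for the two presentations to match. For part (2), the only subtlety is ensuring the PBW filtration on $U$ really does have projective graded pieces over $S$ on \emph{both} sides; invoking Rinehart's theorem directly circumvents any difficulty, and since the excerpt permits citing earlier (and classical) results, I would simply quote \cite{MR0154906} for the PBW isomorphism and deduce projectivity formally. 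In fact, for the application in the rest of the paper, part (2) is exactly the statement that $U$ is flat (indeed projective) over $S$ on each side, which is the key input making the change-of-rings spectral sequences and the functor $F$ behave well, so it is worth stating it in the clean form given.
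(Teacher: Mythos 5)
Your proposal follows essentially the same route as the paper: part (1) is proved by producing the homomorphism $(S\rtimes L)/I\to U$ from $\mathrm{Id}_S$, $\mathrm{Id}_L$ and the inverse $U\to (S\rtimes L)/I$ from the universal property of $U$, checking both on generators, and part (2) rests on Rinehart's PBW-type result together with the observation that the filtration of $U$ by powers of $L$ is a filtration by $S$-subbimodules whose graded pieces are symmetric (hence projective on both sides), which is exactly the paper's adaptation of Rinehart's Lemma 4.1. The only caveat is your parenthetical appeal to a ``principal anti-automorphism'': no such anti-automorphism fixing $S$ and negating $L$ exists for a general Lie-Rinehart algebra, since the relation $\alpha s-s\alpha=\alpha(s)$ is incompatible with $s\alpha\mapsto -s\alpha$; but this is only offered as an alternative, and your other arguments (opposite filtration, symmetry of the graded pieces) are the ones that work and coincide with the paper's.
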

\begin{proof}
  (1) Recall (see \cite{MR0154906}) that $U$ is defined as follows:
  Endow $S\oplus L$ with an $R$-Lie algebra structure such that, for
  all $s,t\in S$ and $\alpha,\beta\in L$,
\[
[ s + \alpha, t + \beta ] = \alpha(t) - \beta(s) +[ \alpha , \beta ]\,;
\]
then, $U$ is the factor $R$-algebra of the subalgebra of the universal
enveloping algebra $\mathcal U_R(S\oplus L)$ generated by the image of
$S \oplus L$ by the two-sided ideal generated by the classes in
$\mathcal U_R( S \oplus L)$ of the following elements, for $s,t \in
S$ and $\alpha \in L$,
\[
s\otimes t - st\,,\ s \otimes \alpha - s\alpha\,.
\]
Recall also that $S\rtimes L$ is the $R$-algebra with underlying
  $R$-module
  \[
  S\otimes \mathcal U_R(L)\,,
  \]
  such that the images of $S\otimes 1$ and $1\otimes \mathcal U_R(L)$ are
  subalgebras, and the following hold, for all $s,t\in S$ and
  $\alpha,\beta\in S$,
  \[
  \left\{
    \begin{array}{l}
      (s\otimes 1) \cdot (1\otimes \alpha) = s\otimes \alpha \\
      (1\otimes \alpha) \cdot (s\otimes 1) = \alpha(s) \otimes 1 +
      s\otimes \alpha\,.
    \end{array}
  \right.
  \]
  Therefore, the natural mappings $S\to U$ and $L\to U$ induce an
  $R$-algebra homomorphism from $S\rtimes L$ to $U$. This homomorphism
  vanishes on $I$ whence the $R$-algebra homomorphism \eqref{eq:2}.

  Besides, the universal property of $U$ stated in \cite[Section 2,
  p. 110]{MR1696093} yields an $R$-algebra homomorphism
  \begin{equation}
    \label{eq:3}
    U \to (S\rtimes L)/I
  \end{equation}
  induced by the natural mappings $S\to (S\rtimes L/I)$ and $L\to
  (S\rtimes L)/I$. In view of the behaviour of \eqref{eq:2} and
  \eqref{eq:3} on the respective images of $S\cup L$, these algebra
  homomorphisms are inverse to each other.

  \medskip

  (2) It is proved in \cite[Lemma 4.1]{MR0154906} that $U$ is
  projective as a left $S$-module. Consider the increasing filtration
  of $U$ by the left $S$-submodules
  \[
  0 \subseteq F_0U \subseteq F_1U \subseteq \cdots
  \]
  where $F_pU$ is the image of $\oplus_{i=0}^p
  S\otimes L^{\otimes i}$ in $U$, for all $p\in \mathbb N$. In view
  of the equality
  \[
  \alpha s = s\alpha + \alpha(s)\,,
  \]
  in $U$ for all $s\in S$ and $\alpha\in L$, the left $S$-module
  $F_pU$ is also a right $S$-submodule of $U$, and $F_pU/F_{p-1}U$ is
  a symmetric $S$-bimodule for all $p\in \mathbb N$. Therefore, the
  considerations used in the proof of \cite[Lemma 4.1]{MR0154906} may
  be adapted in order to prove that $U$ is projective as a right
  $S$-module.
\end{proof}

\subsection{Basic constructions of $U$-modules}
\label{sec:basic-constr-u}

Left $S\rtimes L$-modules are identified with $R$-modules $N$ endowed
with a left $S$-module structure, and a left $L$-module structure such
that, for all $n\in N$, $\alpha\in L$ and $s\in S$,
\[
\alpha \cdot (s \cdot n) = \alpha(s)\cdot n + s\cdot (\alpha \cdot
n)\,.
\]
Left $U$-modules are identified with left $S\rtimes L$-modules $N$
such that, for all $n\in N$, $\alpha \in L$ and $s\in S$,
\[
s\cdot (\alpha\cdot n) = (s\alpha)\cdot n\,.
\]
Recall that the action of $L$ endows $S$ with a left $U$-module
structure.

Right $S\rtimes L$-modules are identified with the $R$-modules $M$
endowed with a right $S$-module structure and a right $L$-module
structure such that, for all $m\in M$, $\alpha\in L$ and $s\in S$,
\[
(m \cdot \alpha) \cdot s = m\cdot \alpha(s) + (m\cdot s) \cdot
\alpha\,.
\]
Right $U$-modules are identified with right $S\rtimes L$-modules $M$
such that, for all $m\in M$, $s\in S$ and $\alpha\in L$,
\[
(m\cdot s)\cdot \alpha = m \cdot (s\alpha)\,.
\]

The following constructions are classical. The corresponding
 $U$-module structures are introduced in \cite[Section 2]{MR1696093}.

Let $M,M'$ be right $S\rtimes L$-modules. Let $N,N'$ be left $S\rtimes
L$-module. Then,
\begin{itemize}
\item $N$ is a right $S\rtimes L$-module for the right $L$-module
  structure such that, for all $n\in N$, $s\in S$ and $\alpha \in L$,
  \begin{equation}
    \label{eq:45}
    n\cdot s = s\cdot n\ \text{and}\ n\cdot \alpha = -\alpha \cdot n\,,
  \end{equation}
\item $\Hom_S(N,N')$ is a left $S\rtimes L$-module for the left
  $L$-module structure such that, for all $f\in \Hom_S(N,N')$,
  $n\in N$ and $\alpha \in L$,
  \begin{equation}
    \label{eq:46}
    (\alpha \cdot f) (n) = \alpha \cdot f(n) - f(\alpha \cdot n)\,,
  \end{equation}
  moreover, this is a left $U$-module structure if $N$ and $N'$ are
  left $U$-modules,
\item $\Hom_S(M,M')$ is a left $S\rtimes L$-module for the left
  $L$-module structure such that, for all $f\in \Hom_S(M,M')$,
  $m\in M$ and $\alpha\in L$,
  \begin{equation}
    \label{eq:47}
    (\alpha \cdot f) (m) = - f(m) \cdot \alpha + f(m \cdot \alpha)\,,
  \end{equation}
\item $\Hom_S(N,S)$ is a right $S\rtimes L$-module for the right
  $L$-module structure such that, for all $f\in \Hom_S(N,S)$, $n\in N$
  and $\alpha\in L$,
  \begin{equation}
    \label{eq:48}
    (f\cdot \alpha) (n) = -\alpha(f(n)) +f(\alpha\cdot n)\,,
  \end{equation}
\item $N \otimes_S N'$ is a left $S\rtimes L$-module for the left
  $L$-module structure such that, for all $n\in N$, $n'\in N$ and
  $\alpha \in L$,
  \begin{equation}
    \label{eq:49}
    \alpha \cdot (n \otimes n') = \alpha \cdot n \otimes n' + n \otimes \alpha \cdot
    n'\,,
  \end{equation}
  moreover, this is a left $U$-module structure if $N$ and $N'$ are
  left $U$-modules,
\item $M\otimes_S N$ is a left $S\rtimes L$-module for the left
  $L$-module structure such that, for all $m\in M$, $n\in N$ and
  $\alpha\in L$,
  \begin{equation}
    \label{eq:50}
    \alpha \cdot (m\otimes n) = -m\cdot \alpha \otimes n + m \otimes
    \alpha \cdot n\,.
  \end{equation}
\end{itemize}

\subsection{The functor $G=\Hom_{S^e}(S,-)\colon \Mod(U^e)\to \Mod(U)$}
\label{sec:funct-gcol-modu-1}

Given $M\in \Mod(U^e)$, recall that
\[
M^S = \{m \in M\ |\ \text{for $s\in S$,}\ \ (s\otimes 1-1\otimes s) \cdot m
= 0\}\,.
\]
This is a symmetric $S^e$-submodule of $M$. Recall also the canonical
isomorphisms that are inverse to each other
\begin{equation}
  \label{eq:5}
  \begin{array}{rcl}
    M^S & \leftrightarrow & \mathrm{Hom}_{S^e}(S,M)\\
    m & \mapsto & (s\mapsto (s\otimes 1)\cdot m)\\
    \varphi(1) & \mapsfrom & \varphi\,.
  \end{array}
\end{equation}
\begin{lem}
  \label{sec:funct-gcol-modu}
  Let $M\in \Mod(U^e)$. Then,
  \begin{enumerate}
  \item $M^S$ is a left $U$-module such that, for all $m\in M^S$ and
    $\alpha\in L$,
    \begin{equation}
      \label{eq:6}
      \alpha \cdot m := (\alpha \otimes 1 - 1 \otimes \alpha) \cdot m\,;
    \end{equation}
  \item the corresponding left $U$-module structure on
    $\Hom_{S^e}(S,M)$ (under the identification (\ref{eq:5})) is such
    that, for all $\varphi\in \Hom_{S^e}(S,M)$, $\alpha\in L$ and
    $s\in S$,
    \[
    (\alpha \cdot \varphi) (s) =
    (\alpha \otimes 1 - 1 \otimes \alpha) \cdot \varphi(s)
    - \varphi(\alpha(s))\,.
    \]
  \end{enumerate}
\end{lem}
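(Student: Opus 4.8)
The plan is to verify directly that the formula $(\ref{eq:6})$ defines a left $U$-module structure on $M^S$, and then to transport this structure along the isomorphism $(\ref{eq:5})$ to obtain the stated formula on $\Hom_{S^e}(S,M)$.

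First I would check that $\alpha\cdot m$ lies in $M^S$ whenever $m\in M^S$ and $\alpha\in L$. For $s\in S$ one computes $(s\otimes 1-1\otimes s)\cdot(\alpha\otimes 1-1\otimes\alpha)\cdot m$ and moves the left factor past $\alpha\otimes 1-1\otimes\alpha$ inside $U^e$; the commutator $[s\otimes 1-1\otimes s,\ \alpha\otimes 1-1\otimes\alpha]$ in $U^e$ is $-\alpha(s)\otimes 1+1\otimes\alpha(s)$ (using $\alpha s=s\alpha+\alpha(s)$ in $U$ and the opposite relation on the right factor), and applying $m\in M^S$ repeatedly makes everything vanish. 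Next I would verify the axioms identifying left $U$-modules with left $S\rtimes L$-modules satisfying $s\cdot(\alpha\cdot n)=(s\alpha)\cdot n$: namely that $m\mapsto\alpha\cdot m$ is $R$-linear (clear), that $[\alpha,\beta]\cdot m=\alpha\cdot(\beta\cdot m)-\beta\cdot(\alpha\cdot m)$, that $\alpha\cdot(s\cdot m)=\alpha(s)\cdot m+s\cdot(\alpha\cdot m)$, and that $s\cdot(\alpha\cdot m)=(s\alpha)\cdot m$. Each of these reduces to a computation inside $U^e$ using the image of the commutation relations of $S\oplus L$ in $\mathcal U_R(S\oplus L)$, together with the fact that $m\in M^S$ kills the elements $s\otimes 1-1\otimes s$; for instance $s\cdot(\alpha\cdot m)=(s\otimes 1)(\alpha\otimes 1-1\otimes\alpha)\cdot m$ and $(s\otimes 1)(1\otimes\alpha)=1\otimes\alpha\cdot s\otimes 1=(1\otimes\alpha)(s\otimes 1)$ acts as $(1\otimes\alpha)$ on $m$ after using $(s\otimes 1-1\otimes s)\cdot m=0$, so $s\cdot(\alpha\cdot m)=(s\alpha\otimes 1-1\otimes\alpha\,s)\cdot m=(s\alpha)\cdot m$ since $s\alpha=\alpha s-\alpha(s)$ only on the right factor and $m\in M^S$ again absorbs the discrepancy.

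For part (2), I would simply translate part (1) through the mutually inverse bijections of $(\ref{eq:5})$. Writing $\varphi\in\Hom_{S^e}(S,M)$ and $m=\varphi(1)\in M^S$, the left $U$-module structure on $\Hom_{S^e}(S,M)$ is forced to be $\alpha\cdot\varphi:=$ the homomorphism sending $1$ to $\alpha\cdot m=(\alpha\otimes 1-1\otimes\alpha)\cdot m$. To evaluate $(\alpha\cdot\varphi)(s)$ for general $s$, I use that an $S^e$-linear map out of $S$ is determined by its value at $1$ via $\psi(s)=(s\otimes 1)\cdot\psi(1)$, so $(\alpha\cdot\varphi)(s)=(s\otimes 1)(\alpha\otimes 1-1\otimes\alpha)\cdot m$. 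Commuting $s\otimes 1$ past $\alpha\otimes 1-1\otimes\alpha$ in $U^e$ produces $(\alpha\otimes 1-1\otimes\alpha)(s\otimes 1)\cdot m$ plus a correction term equal to $-(\alpha(s)\otimes 1)\cdot m=-(\alpha(s)\otimes 1)\cdot\varphi(1)=-\varphi(\alpha(s))$, and $(\alpha\otimes 1-1\otimes\alpha)(s\otimes 1)\cdot m=(\alpha\otimes 1-1\otimes\alpha)\cdot\varphi(s)$; this yields exactly the claimed formula.

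The main obstacle is purely bookkeeping: keeping straight the difference between the relations $\alpha s=s\alpha+\alpha(s)$ that hold in $U$ (hence in the left tensor factor of $U^e$) and their opposites in $U^{\mathrm{op}}$ (the right tensor factor), and using the defining property of $M^S$ at exactly the right moments to discard the terms involving $s\otimes 1-1\otimes s$. There is no conceptual difficulty; the one point requiring a little care is that $M^S$ need not be a $U^e$-submodule of $M$, so one must genuinely check that $\alpha\cdot m\in M^S$ before the formula even makes sense, and that this is where the identity $[s\otimes 1-1\otimes s,\alpha\otimes 1-1\otimes\alpha]=-(\alpha(s)\otimes 1-1\otimes\alpha(s))$ in $U^e$ is used.
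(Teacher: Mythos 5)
Your overall route is the same as the paper's: verify directly that \eqref{eq:6} defines a left $U$-module structure on $M^S$, then transport it along \eqref{eq:5}. Your check that $M^S$ is stable under $\alpha\otimes 1-1\otimes\alpha$ (via the commutator $[s\otimes 1-1\otimes s,\,\alpha\otimes 1-1\otimes\alpha]=-(\alpha(s)\otimes 1-1\otimes\alpha(s))$) and your computation in part (2) (commuting $s\otimes 1$ past $\alpha\otimes 1-1\otimes\alpha$ at the cost of $-(\alpha(s)\otimes 1)\cdot\varphi(1)=-\varphi(\alpha(s))$) are correct and match the paper's proof.

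There is, however, a flaw in your verification of the key identity $s\cdot(\alpha\cdot m)=(s\alpha)\cdot m$, which is precisely the compatibility that makes the $S\rtimes L$-action descend to $U$. You arrive at the intermediate expression $(s\alpha\otimes 1-1\otimes\alpha s)\cdot m$ and then assert that the resulting discrepancy $1\otimes\alpha(s)$ (from $\alpha s=s\alpha+\alpha(s)$ in the right factor) is ``absorbed'' because $m\in M^S$. Membership in $M^S$ does not do that: it only identifies $(t\otimes 1)\cdot m$ with $(1\otimes t)\cdot m$ for $t\in S$, it annihilates neither term, so the alleged discrepancy would contribute $-(1\otimes\alpha(s))\cdot m=-\alpha(s)\cdot m$, which is nonzero in general. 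The real point is that no discrepancy arises at all: the right-hand tensor factor of $U^e$ is $U^{\mathrm{op}}$, so $(1\otimes\alpha)(1\otimes s)=1\otimes s\alpha$ (right multiplication by $s$ followed by right multiplication by $\alpha$ is right multiplication by $s\alpha$). With this one gets $s\cdot(\alpha\cdot m)=(s\alpha\otimes 1)\cdot m-(1\otimes\alpha)\cdot(s\otimes 1)\cdot m=(s\alpha\otimes 1)\cdot m-(1\otimes\alpha)\cdot(1\otimes s)\cdot m=(s\alpha\otimes 1-1\otimes s\alpha)\cdot m=(s\alpha)\cdot m$, exactly as in the paper. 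Once this bookkeeping between $U$ and $U^{\mathrm{op}}$ is corrected, your argument coincides with the paper's proof.
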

\begin{proof}
  (1) Given all $s\in S$ and $\alpha\in L$, denote
  \[
  s\otimes 1 - 1\otimes
  s\in U^e\ \text{and}\ \alpha \otimes 1 - 1 \otimes \alpha\in U^e
  \]
  by $ds$ and $d\alpha$, respectively; in particular
  \[
  d\alpha \cdot ds = ds\cdot d\alpha + d(\alpha(s))\,,
  \]
  and, for all $m\in M^S$,
  \[
  \begin{array}{rcl}
    ds\cdot (d\alpha \cdot m)
    & = & d\alpha \cdot (ds \cdot m) - d(\alpha(s)) \cdot m \\
    & = & 0\,,
  \end{array}
  \]
  which proves that $d\alpha\cdot m \in M^S$.  Therefore, \eqref{eq:6}
  defines a left $L$-module structure on $M^S$. Now, for all
  $m\in M^S$, $s\in S$ and $\alpha\in L$,
  \[
  \begin{array}{rcl}
    \alpha \!\cdot\! (s\!\otimes\! 1) \!\cdot\! m
    & \!\!\!\!= &
              \!\!\!\! d\alpha \!\cdot\! (s\!\otimes\! 1) \!\cdot\! m =
              (\alpha(s) \!\otimes\! 1 + s\alpha \!\otimes\! 1 - s\!\otimes\! \alpha)
              \!\cdot\! m \\
    & \!\!\!\!= &
              \!\!\!\! (\alpha(s) \!\otimes\! 1) \!\cdot\! m + (s \!\otimes\! 1) (\alpha
              \!\otimes\! 1 - 1 \!\otimes\! \alpha) \!\cdot\! m \\
    & \!\!\!\!= &
              \!\!\!\! (\alpha(s) \!\otimes\! 1)\!\cdot\! m + (s\!\otimes\! 1) \!\cdot\! (\alpha
              \!\cdot\! m) \\
    \\
    (s\!\otimes\! 1) \!\cdot\! (\alpha \!\cdot\! m)
    & \!\!\!\!= &
              \!\!\!\! (s\!\otimes\! 1) \!\cdot\! (\alpha \!\otimes\! 1 - 1 \!\otimes\! \alpha)
              \!\cdot\! m =
              (s \alpha \!\otimes\! 1) \!\cdot\! m - (s \!\otimes\! 1) \!\cdot\! (1
              \!\otimes\! \alpha) \!\cdot\! m \\
    & \!\!\!\!= &
              \!\!\!\! (s\alpha \!\otimes\! 1) \!\cdot\! m - (1\!\otimes\! \alpha) \!\cdot\!
              (s\!\otimes\! 1) \!\cdot\! m =
              (s\alpha \!\otimes\! 1) \!\cdot\! m - (1\!\otimes\! \alpha) \!\cdot\! (1
              \!\otimes\! s) \!\cdot\! m \\
    & \!\!\!\!= &
              \!\!\!\! (s\alpha \!\otimes\! 1 - 1 \!\otimes\! s\alpha) \!\cdot\! m =
              (s\alpha) \!\cdot\! m\,.
  \end{array}
  \]
  Hence, this left $L$-module structure on $M^S$ is a left $U$-module
  structure.

  \medskip
  
  (2) By definition, $\Hom_{S^e}(S,M)$ is endowed with the left
  $U$-module structure such that (\ref{eq:5}) is an isomorphism in
  $\Mod(U)$. Let $\varphi \in \Hom_{S^e}(S,M)$, $\alpha \in L$ and
  $s\in S$. Then,
  \[
  \begin{array}{rcl}
    (\alpha \cdot \varphi)(s)
    & = &
          (1 \otimes s) \cdot (\alpha \cdot \varphi(1)) =
          ((1 \otimes s) (\alpha \otimes 1 - 1 \otimes \alpha)) \cdot
          \varphi(1) \\
    & = &
          (\alpha \otimes s - 1 \otimes s\alpha - 1 \otimes \alpha(s))
          \cdot \varphi(1) \\
    & = &
          ((\alpha \otimes 1 - 1 \otimes \alpha)(1 \otimes s) - 1
          \otimes \alpha(s)) \cdot \varphi(1) \\
    & = &
          \alpha \cdot (1 \otimes s) \cdot \varphi(1) - (1 \otimes
          \alpha(s)) \cdot \varphi(1) =
          \alpha \cdot \varphi(s) - \varphi(\alpha(s))\,.
  \end{array}
  \]
\end{proof}
Thus, the assignment $M\mapsto M^S$ defines a functor
\begin{equation}
  \label{eq:7}
  \begin{array}{crcl}
    G \colon & \Mod(U^e) & \to & \Mod(U) \\
             & M & \mapsto & M^S\,.
  \end{array}
\end{equation}

\subsection{The functor $F = U \otimes_S-\colon \Mod(U) \to \Mod(U^e)$}
\label{sec:functor-fcolon-modu}

Let $N\in \Mod(U)$. In view of \cite[(2.4)]{MR1696093},
$U_U \otimes_S N$ is a right $U$-module such that, for all $u\in U$,
$n\in N$, $s\in S$ and $\alpha\in L$,
\[
(u \otimes n) \cdot s = u \otimes sn = us\otimes n\ \ \text{and}\ \ (u
\otimes n) \cdot \alpha = u\alpha \otimes n - u \otimes \alpha \cdot
n\,.
\]
Besides, $U \otimes_S N$ is a left $U$-module such that, for all $u,u'\in
U$ and $n\in N$,
\[
u' \cdot (u \otimes n) = u'u \otimes n\,.
\]
Therefore, $U\otimes_S N$ is a $U$-bimodule, and hence a left
$U^e$-module. These considerations define a functor
\begin{equation}
  \label{eq:8}
  \begin{array}{crcl}
    F \colon & \Mod(U) & \to & \Mod(U^e) \\
             & N & \mapsto & U \otimes_S N\,.
  \end{array}
\end{equation}

\subsection{The adjunction between $F$ and $G$}
\label{sec:adjunction-between-f}

\begin{prop}
  The functors $F = U\otimes_S-$ and $G=\Hom_{S^e}(S,-)$ introduced in
  \ref{sec:funct-gcol-modu-1} and \ref{sec:functor-fcolon-modu} form
  an adjoint pair
  \[
  \xymatrix{
    \Mod U \ar@<-3pt>[d]_F \\ \Mod U^e \ar@<-3pt>[u]_G\,.
    }
  \]
  In particular, there is a functorial isomorphism, for all
  $M\in \Mod(U^e)$ and $N\in \Mod(U)$,
  \[
  \Hom_U(N,G(M)) \xrightarrow{\sim} \Hom_{U^e}(F(N),M)\,.
  \]
\end{prop}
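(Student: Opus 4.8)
The plan is to exhibit the unit of the adjunction explicitly and check the triangle identities, or — what amounts to the same thing but is cleaner here — to construct directly a natural bijection $\Phi\colon \Hom_{U^e}(F(N),M) \xrightarrow{\sim} \Hom_U(N,G(M))$ for $N\in\Mod(U)$ and $M\in\Mod(U^e)$. First I would recall that, as a left $U$-module, $F(N) = U\otimes_S N$, and that $G(M) = M^S$ with the left $U$-module structure of Lemma~\ref{sec:funct-gcol-modu}(1), under the identification $M^S \simeq \Hom_{S^e}(S,M)$ of \eqref{eq:5}. The candidate for $\Phi$ is the restriction-along-the-unit map: given a $U$-bimodule morphism $g\colon U\otimes_S N \to M$, set $\Phi(g)(n) := g(1\otimes n)$ for $n\in N$. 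The first thing to verify is that $\Phi(g)(n)$ indeed lies in $M^S$: for $s\in S$ one has $(s\otimes1 - 1\otimes s)\cdot g(1\otimes n) = g(s\otimes n) - g((1\otimes n)\cdot s) = g(s\otimes n) - g(s\otimes n) = 0$, using that $g$ is a bimodule map and the right action $(1\otimes n)\cdot s = s\otimes n$ from Section~\ref{sec:functor-fcolon-modu}.

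Next I would check that $\Phi(g)\colon N \to M^S$ is a morphism of left $U$-modules. $S$-linearity is immediate from left $U$-linearity of $g$ and the fact that the left $S$-action on $M^S$ is $s\cdot m = (s\otimes 1)\cdot m$. For the $L$-action, using Lemma~\ref{sec:funct-gcol-modu}(1), one computes $\alpha\cdot \Phi(g)(n) = (\alpha\otimes 1 - 1\otimes\alpha)\cdot g(1\otimes n) = g(\alpha\otimes n) - g((1\otimes n)\cdot\alpha) = g(\alpha\otimes n) - g(\alpha\otimes n - 1\otimes \alpha\cdot n) = g(1\otimes \alpha\cdot n) = \Phi(g)(\alpha\cdot n)$, where the second equality uses left $U$-linearity for $\alpha\otimes 1$ and right $U$-linearity for $1\otimes\alpha$, and the third uses the right action formula $(1\otimes n)\cdot\alpha = \alpha\otimes n - 1\otimes\alpha\cdot n$. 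This is the computational heart of the argument, and it is short.

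For the inverse, I would send $f\colon N \to M^S$ to the map $\Psi(f)\colon U\otimes_S N \to M$, $u\otimes n \mapsto u\cdot f(n)$, where on the right $M$ is viewed as a left $U^e$-module, i.e. $u\cdot f(n)$ means $(u\otimes 1)\cdot f(n)$ in $M$. One checks $\Psi(f)$ is well defined over $\otimes_S$ because $f$ is $S$-linear and $f(n)\in M^S$ (so $(us\otimes 1)\cdot f(n) = (u\otimes 1)\cdot(s\otimes1)\cdot f(n) = (u\otimes 1)\cdot(1\otimes s)\cdot f(n) = (u\otimes s)\cdot f(n)$). Left $U$-linearity of $\Psi(f)$ is clear; right $U$-linearity on the subalgebra $S$ is the same computation as well-definedness, and on $L$ it reduces, via $(u\otimes n)\cdot\alpha = u\alpha\otimes n - u\otimes\alpha\cdot n$, to the identity $(u\alpha\otimes1)\cdot f(n) - (u\otimes1)\cdot f(\alpha\cdot n) = (u\otimes1)\cdot(\alpha\otimes1 - 1\otimes\alpha)\cdot f(n)$, which follows from $f(\alpha\cdot n) = \alpha\cdot f(n) = (\alpha\otimes1-1\otimes\alpha)\cdot f(n)$ (Lemma~\ref{sec:funct-gcol-modu}(1) again) together with $(u\alpha\otimes1) = (u\otimes1)(\alpha\otimes1)$ in $U^e$. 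Finally $\Phi$ and $\Psi$ are mutually inverse: $\Phi(\Psi(f))(n) = \Psi(f)(1\otimes n) = f(n)$, and $\Psi(\Phi(g))(u\otimes n) = u\cdot g(1\otimes n) = g(u\otimes n)$ by left $U$-linearity of $g$. Naturality in both variables is a routine diagram chase.

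The only point requiring any care — the "main obstacle", such as it is — is bookkeeping the two different ways $M$ is acted on: as a genuine $U^e$-module versus the derived left $U$-action $\alpha\cdot m = (\alpha\otimes1 - 1\otimes\alpha)\cdot m$ on $M^S$, and matching these against the twisted right $U$-action on $U\otimes_S N$. Once the formulas from Section~\ref{sec:functor-fcolon-modu} and Lemma~\ref{sec:funct-gcol-modu} are kept side by side, every verification collapses to a one-line manipulation in $U^e$, so the proof is entirely formal.
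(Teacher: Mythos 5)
Your construction is exactly the paper's proof: the paper defines $\Phi(f)(u\otimes n)=(u\otimes 1)\cdot f(n)$ and $\Psi(g)(n)=g(1\otimes n)$ (your $\Psi$ and $\Phi$ with the names swapped) and performs the same well-definedness, $S$-, $L$- and bilinearity checks, so the approach is identical and correct. The one slip is the displayed identity in your right $U$-linearity check over $L$: the target should be $\Psi(f)(u\otimes n)\cdot\alpha=(1\otimes\alpha)\cdot(u\otimes 1)\cdot f(n)=(u\otimes\alpha)\cdot f(n)$, since the right action of $\alpha$ on $M$ comes from $1\otimes\alpha\in U^e$, not from $\alpha\otimes 1-1\otimes\alpha$ (the latter is the derived action used on $M^S$), so your right-hand side $(u\otimes 1)\cdot(\alpha\otimes 1-1\otimes\alpha)\cdot f(n)$ is not what must be matched. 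The facts you cite, namely $f(\alpha\cdot n)=(\alpha\otimes 1-1\otimes\alpha)\cdot f(n)$ and $u\alpha\otimes 1=(u\otimes 1)(\alpha\otimes 1)$, do yield the corrected identity, so the error is cosmetic and the argument goes through exactly as in the paper.
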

\begin{proof}
  Given $f\in \Hom_U(N,G(M))$, denote by $\Phi(f)$ the well-defined mapping
  \[
  \begin{array}{rcl}
    U \otimes_S N & \to & M \\
    u \otimes n & \mapsto & (u \otimes 1)\cdot f(n)\,;
  \end{array}
  \]
  consider $F(N)$ ($=U \otimes_S N$) as a $U$-bimodule. Then, for all
  $u,u'\in U$, $n\in N$, $s\in S$ and $\alpha\in L$, 
  \[
  \begin{array}{rcl}
    \Phi(f) (u'\cdot (u \otimes n))
    & = &
          \Phi(f) (u'u\otimes n) 
          =
          (u'u \otimes 1) \cdot f(n) \\
    & = &
          (u' \otimes 1) \cdot \Phi(f)(u \otimes n)\,, \\
    \\
    \Phi(f)((u\otimes n)\cdot s)
    & = &
          \Phi(f)(u \otimes s\cdot n) 
          =
          (u \otimes 1) \cdot f(s\cdot n) \\
    & = &
          (u \otimes 1) \cdot ((1 \otimes s)\cdot f(n))
          =
          ((1 \otimes s) \cdot (u \otimes 1)) \cdot f(n) \\
    & = &
          (1 \otimes s) \cdot \Phi(f) (u \otimes n) 
          =
          (\Phi(f)(u\otimes n))\cdot s \\
    \\
    \Phi(f) ((u \otimes n) \cdot \alpha)
    & = &
          \Phi(f) (u \alpha \otimes n - u \otimes \alpha \cdot n) \\
    & = &
          (u \alpha \otimes 1) \cdot f(n) - (u \otimes 1)\cdot
          f(\alpha \cdot n) \\
    & = &
          (u \alpha \otimes 1) \cdot f(n) - (u\otimes 1) \cdot (\alpha
          \otimes 1 - 1 \otimes \alpha) \cdot f(n) \\
    & = &
          (u \otimes \alpha) \cdot f(n)
          =
          (1 \otimes \alpha) \cdot \Phi(f) (u \otimes n) \\
    & = &
          (\Phi(f)(u \otimes n))\cdot \alpha\,;
  \end{array}
  \]
  in other words,
  \[
  \Phi(f) \in \Hom_{U^e}(F(N),M)\,.
  \]
  Given $g\in \Hom_{U^e}(F(N),M)$, then, for all $n\in N$ and $s\in
  S$,
  \[
    (s \otimes 1 - 1 \otimes s) \cdot g(1 \otimes n)
    =
          g(s \otimes_S n - 1 \otimes_S s\cdot n) \\
          =
          0\,;
  \]
  hence, denote by $\Psi(g)$ the well-defined mapping
  \[
  \begin{array}{rcl}
    N & \to & M^S \\
    n & \mapsto & g(1\otimes n)\,.
  \end{array}
  \]
  Therefore, for all $n\in N$, $s\in S$ and $\alpha \in L$,
  \[
  \begin{array}{rcl}
    \Psi(g)(s\cdot n)
    & = &
          g(1 \otimes s\cdot n)
      = 
          g(s\otimes n)
      = 
          g((s \otimes 1)\cdot(1 \otimes n)) \\
    & = &
          (s \otimes 1) \cdot g(1 \otimes n)
      = 
          (s\otimes 1) \cdot \Psi(g)(n) \\
    \\
    \Psi(g)(\alpha \cdot n)
    & = &
          g(1 \otimes \alpha \cdot n) 
          =
          g(\alpha \otimes n - (1\otimes \alpha) \cdot (1 \otimes n))
    \\
    & = &
          (\alpha \otimes 1) \cdot g(1 \otimes n) - (1 \otimes \alpha)
          \cdot g(1\otimes n) 
          =
          \alpha \cdot \Psi(g)(n)\,;
  \end{array}
  \]
  in other words,
  \[
  \Psi(g) \in \Hom_U(N,G(M))\,.
  \]
  By construction, $\Psi$ and $\Phi$ are inverse to each other.
\end{proof}

\subsection{Compatible left $S\rtimes L$-modules}
\label{sec:10}

As explained in Section~\ref{sec:main-results}, the main results of
this article are expressed in terms of the action of $L$ on
$\Ext^\bullet_{S^e}(S,S^e)$ by Lie derivatives and will be presented in
Section~\ref{sec:left-u-module}. Although this action does not define
a $U$-module structure on $\Ext^\bullet_{S^e}(S,S^e)$, it satisfies
some compatibility with the $S$-module structure. The actions of $L$
satisfying such a compatibility have specific properties that are used
in the rest of the article and which are summarised below.

Define a \emph{compatible} left $S\rtimes L$-module as a left
$S \rtimes L$-module $N$ such that, for all $n\in N$, $\alpha\in L$
and $s\in S$, the elements $s\alpha\in L$ and $\alpha(s)\in S$ satisfy
  \begin{equation}
    \label{eq:4}
    (s\alpha) \cdot n = s\cdot (\alpha \cdot n) -\alpha(s) \cdot n\,.
  \end{equation}
Note that a left $S\rtimes L$-module is both compatible and a left
$U$-module if and only if $L$ acts trivially, that is, by the zero action.

The two following lemmas present the properties of compatible left
$S\rtimes L$-modules used in the rest of the article.
\begin{lem}
  \label{sec:left-right-u}
  Let $M$ be a right $U$-module. Let $N$ be a compatible left
  $S\rtimes L$-module.  Then:
  \begin{enumerate}
  \item The right $S\rtimes L$-module $N^\vee =\Hom_S(N,S)$ is a right
    $U$-module,
  \item The left $S\rtimes L$-module $\Hom_S(N^\vee,M)$ is a left
    $U$-module,
  \item The left $S\rtimes L$-module $M\otimes_S N$ is a left
    $U$-module,
  \item The following canonical mapping is a morphism of left
    $U$-modules,
    \[
    \begin{array}{crcl}
      \theta \colon 
      & M \otimes_S N & \to & \Hom_S(N^\vee,M) \\
      & m \otimes n & \mapsto & \left( \theta_{m\otimes n} \colon
                                \varphi \mapsto m \cdot \varphi(n) \right)\,.
    \end{array}
    \]
  \end{enumerate}
\end{lem}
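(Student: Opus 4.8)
The plan is to verify each of the four items in turn, exploiting the defining relation~\eqref{eq:4} of compatible left $S\rtimes L$-modules together with the explicit formulas~\eqref{eq:45}--\eqref{eq:50} for the induced $S\rtimes L$-structures. Throughout, the criterion to remember is that a left $S\rtimes L$-module $P$ is a left $U$-module precisely when $s\cdot(\alpha\cdot p)=(s\alpha)\cdot p$ for all $s\in S$, $\alpha\in L$, $p\in P$; dually for right modules. So in each case the task is to reduce the required $U$-module identity to the compatibility relation~\eqref{eq:4} for $N$, the fact that $M$ is a genuine right $U$-module, and the $R$-bilinearity of the structure maps.

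First I would treat~(1). By~\eqref{eq:48}, for $f\in N^\vee$, $\alpha\in L$, $n\in N$ one has $(f\cdot\alpha)(n)=-\alpha(f(n))+f(\alpha\cdot n)$, and by~\eqref{eq:45} (applied inside $N^\vee=\Hom_S(N,S)$, viewing $S$ with its tautological structure) $(f\cdot s)(n)=f(n)\cdot s = s\,f(n)$ pointwise, i.e. $f\cdot s$ is just the $S$-scalar multiple. I would compute $((f\cdot s)\cdot\alpha)(n)$ and $(f\cdot(s\alpha))(n)$ directly: expanding the first gives $-\alpha(s\,f(n))+s\,f(\alpha\cdot n)=-\alpha(s)f(n)-s\,\alpha(f(n))+s\,f(\alpha\cdot n)$ using that $\alpha\in\mathrm{Der}_R(S)$, while expanding the second gives $-(s\alpha)(f(n))+f((s\alpha)\cdot n)=-s\,\alpha(f(n))+f\big(s\cdot(\alpha\cdot n)-\alpha(s)\cdot n\big)$ by~\eqref{eq:4}, and $S$-linearity of $f$ turns this into $-s\,\alpha(f(n))+s\,f(\alpha\cdot n)-\alpha(s)f(n)$. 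The two sides agree, so $N^\vee$ is a right $U$-module.

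Next, item~(3): $M$ is a right $U$-module, so in particular $M$ is a left $S$-module via $s\cdot m:=m\cdot s$, and the left $S\rtimes L$-structure on $M\otimes_S N$ is~\eqref{eq:50}, $\alpha\cdot(m\otimes n)=-m\cdot\alpha\otimes n+m\otimes\alpha\cdot n$. I would check $s\cdot(\alpha\cdot(m\otimes n))=(s\alpha)\cdot(m\otimes n)$. The left-hand side is $-s(m\cdot\alpha)\otimes n+s m\otimes\alpha\cdot n=-(m\cdot\alpha)\cdot s\otimes n + (m\cdot s)\otimes\alpha\cdot n$; here I rewrite $(m\cdot\alpha)\cdot s$ using the right $S\rtimes L$ relation $(m\cdot\alpha)\cdot s=m\cdot\alpha(s)+(m\cdot s)\cdot\alpha$, and I can move the scalar $s$ across $\otimes_S$ freely. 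The right-hand side is $-m\cdot(s\alpha)\otimes n + m\otimes(s\alpha)\cdot n = -(m\cdot s)\cdot\alpha\otimes n - m\cdot\alpha(s)\otimes n + m\otimes\big(s\cdot(\alpha\cdot n)-\alpha(s)\cdot n\big)$, the last step by~\eqref{eq:4}. Using $m\otimes s\cdot(\alpha\cdot n)=(m\cdot s)\otimes\alpha\cdot n$ and $m\cdot\alpha(s)\otimes n = m\otimes\alpha(s)\cdot n$ (balancing the scalar $\alpha(s)\in S$ across the tensor), the two sides match. Item~(2) is the same bookkeeping applied to $\Hom_S(N^\vee,M)$ with the structure~\eqref{eq:47}: since $N^\vee$ is by~(1) a right $U$-module, $\Hom_S(N^\vee,M)$ carries the left $S\rtimes L$-structure $(\alpha\cdot f)(g)=-f(g)\cdot\alpha+f(g\cdot\alpha)$, and one verifies $s\cdot(\alpha\cdot f)=(s\alpha)\cdot f$ by expanding both sides, invoking the right $U$-module relation in $M$, the right $U$-module relation in $N^\vee$ just established, and the derivation property of $\alpha$; I would present this computation compactly rather than in full.

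Finally, for~(4) I would first check that $\theta$ is well-defined and $S$-linear (the $S$-balancing $\theta_{ms\otimes n}=\theta_{m\otimes sn}$ follows since $(m\cdot s)\cdot\varphi(n)=m\cdot(s\,\varphi(n))=m\cdot\varphi(s n)$ by $S$-linearity of $\varphi$), and then that $\theta$ intertwines the left $L$-actions, i.e. $\theta_{\alpha\cdot(m\otimes n)}=\alpha\cdot\theta_{m\otimes n}$ as elements of $\Hom_S(N^\vee,M)$. Evaluating the right-hand side at $\varphi\in N^\vee$ using~\eqref{eq:47} gives $-\theta_{m\otimes n}(\varphi)\cdot\alpha+\theta_{m\otimes n}(\varphi\cdot\alpha)=-(m\cdot\varphi(n))\cdot\alpha+m\cdot(\varphi\cdot\alpha)(n)$, and $(\varphi\cdot\alpha)(n)=-\alpha(\varphi(n))+\varphi(\alpha\cdot n)$ by~\eqref{eq:48}, so this equals $-(m\cdot\varphi(n))\cdot\alpha - m\cdot\alpha(\varphi(n)) + m\cdot\varphi(\alpha\cdot n)$; the first two terms combine via the right $S\rtimes L$ relation in $M$ to $-(m\cdot\alpha)\cdot\varphi(n)=\theta_{-m\cdot\alpha\otimes n}(\varphi)$, and the last term is $\theta_{m\otimes\alpha\cdot n}(\varphi)$, whose sum is $\theta_{\alpha\cdot(m\otimes n)}(\varphi)$ by~\eqref{eq:50}. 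Since both source and target are left $U$-modules by~(2) and~(3) and $\theta$ is $S$-linear and $L$-linear, it is a morphism of left $U$-modules. The computations are all elementary; the only mild subtlety — and the step most prone to sign or balancing errors — is keeping straight the several distinct induced actions (the right $U$-action on $N^\vee$, the left $S$-action on $M$ coming from its right $U$-action, and the derivation term $\alpha(s)$) when expanding~(2); I would isolate that verification and write it out carefully.
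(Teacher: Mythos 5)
Your proposal is correct and follows essentially the same route as the paper: items (1), (3) and (4) are established by the same direct expansions, using the compatibility relation \eqref{eq:4}, the right $S\rtimes L$ relation in $M$, and $S$-linearity of the maps, and your computation for (4) matches the paper's line for line. The only divergence is item (2), which the paper dispatches by citing \cite[(2.3)]{MR1696093}, whereas you sketch the (correct) direct verification deduced from (1); both are adequate.
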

\begin{proof}
  (1) Given $\varphi \in N^\vee$, $s\in S$ and $\alpha\in L$, then
  \[
  \varphi \cdot (s\alpha) = (\varphi \cdot s) \cdot \alpha\,.
  \]
  Indeed, for all $n\in N$,
  \[
  \begin{array}{rcl}
    (\varphi \cdot (s\alpha)) (n)
    & = & -(s\alpha)(\varphi(n)) + \varphi((s\alpha)\cdot n) \\
    & = & -s \, \alpha(\varphi(n)) + \varphi ( s\cdot (\alpha\cdot n) -
          \alpha(s) \cdot n) \\
    & = & -s \, \alpha(\varphi(n)) + s \, \varphi(\alpha\cdot n) - \alpha(s)
          \varphi(n) \\
    & = & ((\varphi \cdot \alpha)\cdot s)(n) - (\varphi \cdot
          \alpha(s))(n) \\
    & = & ((\varphi\cdot s)\cdot \alpha)(n)\,.          
  \end{array}
  \]
  
  \medskip

  (2) This
  is precisely \cite[(2.3)]{MR1696093}.

  \medskip

  (3) The $S\rtimes L$-module structure of $M\otimes_S N$ is described
  in \eqref{eq:50}. Given $m\in M$, $n\in N$, $s\in S$ and
  $\alpha\in L$, then
  \[
  \begin{array}{rcl}
    (s\alpha)\cdot (m\otimes n)
    & \!\!\!\!=& \!\!-m\cdot (s\alpha) \otimes n + m \otimes (s\alpha)\cdot n \\
    & \!\!\!\!= & \!\!-(m \cdot \alpha) \cdot s \otimes n + m \cdot \alpha(s)
          \otimes n
          
          + m \otimes s\cdot (\alpha\cdot n) - m \otimes
          \alpha(s) \cdot n \\
    & \!\!\!\!= & \!\!s\cdot (\alpha \cdot (m\otimes n))\,.
  \end{array}
  \]

  \medskip

  (4)  It suffices to prove that the given mapping is $L$-linear. Let
  $m\in M$, $n\in N$, $\alpha\in L$ and $\varphi\in
  \Hom_S(N,S)$. Then,
  \[
  \begin{array}{rcl}
    (\alpha\cdot \theta_{m\otimes n})(\varphi)
    & \!\!\!\!= & \!\!-\theta_{m\otimes n}(\varphi)\cdot \alpha + \theta_{m\otimes
          n}(\varphi \cdot \alpha)
    = -(m\cdot \varphi(n))\cdot \alpha + m \cdot (\varphi\cdot
          \alpha)(n) \\
    & \!\!\!\!= & \!\!-((m\cdot \alpha)\cdot \varphi(n) - m \cdot
          \alpha(\varphi(n))) 
          + m \cdot (-\alpha(\varphi(n)) + \varphi(\alpha \cdot n)) \\
    & \!\!\!\!= & \!\!-(m\cdot \alpha) \cdot \varphi(n) + m \cdot
          \varphi(\alpha\cdot n)
    = \theta_{\alpha\cdot (m\otimes n)}(\varphi)\,;
  \end{array}
  \]
  thus, $\alpha\cdot \theta_{m\otimes n} = \theta_{\alpha\cdot
    (m\otimes n)}$.
\end{proof}

Any left $S\rtimes L$-module $N$ may be considered as a symmetric
$S$-bimodule, or equivalently a right $S^e$-module, such that, for all
$n\in N$ and $s,s'\in S$
\[
n\cdot (s\otimes s') = (ss')\cdot n\,;
\]
accordingly, $N\otimes_{S^e}U^e$ is a right $U^e$-module in a natural way.
\begin{lem}
  \label{sec:1}
  Let $N$ be a compatible left $S\rtimes L$-module
  \begin{enumerate}
  \item The right $U^e$-module $N\otimes_{S^e}U^e$ is actually a
    $U$-$U^e$-bimodule such that for all $n\in N$, $u,v\in U$ and $\alpha\in L$,
      \[
      \alpha \cdot (n \otimes (u \otimes v))
      =
      \alpha \cdot n \otimes (u \otimes v)
      + n \otimes ((\alpha\otimes 1 -1 \otimes \alpha) \cdot (u
      \otimes v))\,.
      \]
  \item Let $M$ be a right $U$-module. Then, there exists an
    isomorphism of left $U^e$-modules
    \[
    \begin{array}{rcl}
      F(M\otimes_S N) & \to & M \otimes_U (N \otimes_{S^e}U^e) \\
      v \otimes (m \otimes n) & \mapsto & m \otimes (n \otimes (1
                                          \otimes v))\,.
    \end{array}
    \]
  \end{enumerate}
\end{lem}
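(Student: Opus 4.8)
The plan is to verify part~(1) by a direct check that the proposed formula is well defined and satisfies the axioms of a left $S\rtimes L$-action commuting with the right $U^e$-action, and then to deduce part~(2) by exhibiting the displayed map explicitly together with an inverse. For part~(1), I would first observe that $N\otimes_{S^e}U^e$ is already a right $U^e$-module via the second factor. To put a left $L$-action on it by the stated formula, the key point is well-definedness over the tensor product $N\otimes_{S^e}U^e$: one must check that the formula $\alpha\cdot(n\otimes(u\otimes v)) = \alpha\cdot n\otimes(u\otimes v) + n\otimes(d\alpha\cdot(u\otimes v))$, where I write $d\alpha=\alpha\otimes 1-1\otimes\alpha\in U^e$, respects the relation $n\cdot(s\otimes s')\otimes(u\otimes v) = n\otimes((s\otimes s')\cdot(u\otimes v))$, i.e. $(ss')\cdot n\otimes(u\otimes v) = n\otimes((s\otimes s')(u\otimes v))$. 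This is where the \emph{compatibility} hypothesis~\eqref{eq:4} on $N$ is used: applying $\alpha\cdot(-)$ to both sides and using $d\alpha\cdot ds = ds\cdot d\alpha + d(\alpha(s))$ in $U^e$ (with the notation $ds=s\otimes1-1\otimes s$, as in the proof of Lemma~\ref{sec:funct-gcol-modu}), the discrepancy between the two expressions is exactly measured by the difference $(s\alpha)\cdot n - s\cdot(\alpha\cdot n) + \alpha(s)\cdot n$, which vanishes by~\eqref{eq:4}. After well-definedness, checking that this is a left $L$-action (compatibility of brackets) and that it satisfies the Leibniz rule $\alpha\cdot(s\cdot x) = \alpha(s)\cdot x + s\cdot(\alpha\cdot x)$ with respect to the symmetric $S$-bimodule structure is a routine computation using that $\alpha\cdot(-)$ on $N$ already satisfies it and that $d\alpha$ acts as a derivation-like element in $U^e$; finally, commutation with the right $U^e$-action is immediate since left multiplication by $d\alpha$ and right multiplication by $U^e$ commute inside $U^e$.

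For part~(2), I would define the candidate map $\Theta\colon F(M\otimes_S N)\to M\otimes_U(N\otimes_{S^e}U^e)$ on generators by $v\otimes(m\otimes n)\mapsto m\otimes(n\otimes(1\otimes v))$ and check that it is well defined: this requires descending through the tensor product $U\otimes_S(M\otimes_S N)$ on the left and through $M\otimes_U(N\otimes_{S^e}U^e)$ on the right, using that $M\otimes_S N$ carries the $S\rtimes L$-module structure~\eqref{eq:50} and that, by Lemma~\ref{sec:left-right-u}(3), it is in fact a left $U$-module. For the inverse, I would define $\Theta'\colon M\otimes_U(N\otimes_{S^e}U^e)\to F(M\otimes_S N)$ by $m\otimes(n\otimes(u\otimes v))\mapsto v\otimes(m\cdot u\otimes n)$ — reading $u$ as acting on the right of $M$ via the right $U$-module structure — and check well-definedness over the various tensor-product relations, in particular over $\otimes_U$ which mixes the right $U$-action on $M$ with the $U$-action coming from the left factor of $N\otimes_{S^e}U^e$ in the $U$-$U^e$-bimodule structure of part~(1). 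A short computation shows $\Theta$ and $\Theta'$ are mutually inverse on generators.

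Finally I would verify that $\Theta$ is $U^e$-linear, i.e. a morphism of left $U^e$-modules (equivalently, of $U$-bimodules). Left $U$-linearity is clear since $u'\cdot(v\otimes(m\otimes n)) = u'v\otimes(m\otimes n)$ maps to $m\otimes(n\otimes(1\otimes u'v)) = u'\cdot(m\otimes(n\otimes(1\otimes v)))$, using the left $U$-action on $M\otimes_U(N\otimes_{S^e}U^e)$ through the right-hand $U$ of $U^e$. Right $U$-linearity is the substantive part: one must check compatibility with the right action of $s\in S$ and of $\alpha\in L$ on $F(M\otimes_S N)$, as recalled in Section~\ref{sec:functor-fcolon-modu}, namely $(v\otimes x)\cdot s = vs\otimes x$ and $(v\otimes x)\cdot\alpha = v\alpha\otimes x - v\otimes\alpha\cdot x$ with $x=m\otimes n$ and $\alpha\cdot x$ given by~\eqref{eq:50}. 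Tracing these through $\Theta$ and using the explicit formula from part~(1) for $\alpha\cdot(n\otimes(1\otimes v))$, together with $\alpha(1\otimes v) = \alpha\otimes v$ versus $(1\otimes v)\cdot d\alpha$ in $U^e$, one matches both sides; the bookkeeping is where~\eqref{eq:50} and the part~(1) formula must be made to agree, and I expect this matching to be the main obstacle, though purely computational. The other hypotheses of the paper (flatness of $S$, projectivity of $L$) play no role here; only compatibility of $N$ is needed.
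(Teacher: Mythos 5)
Your overall strategy (direct verification of (1), explicit inverse for (2)) could in principle work, but two of its key claims are wrong as stated. First, in part (1) you have misplaced the role of the compatibility hypothesis~\eqref{eq:4}. The balancing check over $\otimes_{S^e}$ does not involve it at all: the relation to respect is $ss'\cdot n\otimes(u\otimes v)=n\otimes(su\otimes vs')$, and since $[\alpha\otimes 1-1\otimes\alpha,\,s\otimes s']=\alpha(s)\otimes s'+s\otimes\alpha(s')$ in $U^e$, the discrepancy is exactly absorbed by the ordinary $S\rtimes L$-axiom $\alpha\cdot(ss'\cdot n)=\alpha(ss')\cdot n+ss'\cdot(\alpha\cdot n)$; the element $s\alpha\in L$ never appears, so the defect $(s\alpha)\cdot n-s\cdot(\alpha\cdot n)+\alpha(s)\cdot n$ you invoke cannot be what cancels. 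Where \eqref{eq:4} is genuinely needed is in a step your plan omits entirely: showing that the $S\rtimes L$-action descends to a left $U$-module structure, i.e. $(s\alpha)\cdot x=s\cdot(\alpha\cdot x)$ for $x\in N\otimes_{S^e}U^e$ (a computation in which the difference $s\otimes\alpha-1\otimes s\alpha$ acting on $u\otimes v$ produces exactly the compatibility defect of $N$). Without this, ``$U$-$U^e$-bimodule'' is not established, and the tensor product $M\otimes_U(N\otimes_{S^e}U^e)$ in part (2) is not even defined. Note that the paper avoids all of these checks by transporting structure: it applies Lemma~\ref{sec:left-right-u}(3) to $M=U$ to make $U\otimes_S N$ a left $U$-module, forms $(U\otimes_SN)\otimes_SU$ via \eqref{eq:49}, and uses the canonical identification $n\otimes(u\otimes v)\mapsto(v\otimes n)\otimes u$; compatibility enters only through that lemma.

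Second, your proposed inverse in part (2), $m\otimes(n\otimes(u\otimes v))\mapsto v\otimes(m\cdot u\otimes n)$, is not well defined, and the well-definedness check you defer would fail. It violates the $S^e$-balancing (for $u=\alpha$, the images of $ss'n\otimes(u\otimes v)$ and $n\otimes(su\otimes vs')$ differ by $v\otimes(m\,\alpha(s)s'\otimes n)$) and the $U$-balancing (for the relation $m\cdot\alpha\otimes x=m\otimes\alpha\cdot x$ the terms $v\alpha\otimes(mu\otimes n)$ and $v\otimes(mu\otimes\alpha\cdot n)$ coming from the part (1) action are not reproduced). The correct inverse, as in the paper, is $m\otimes(n\otimes(u\otimes v))\mapsto(1\otimes u)\cdot(v\otimes(m\otimes n))$, using the full right $U^e$-module structure of $F(M\otimes_SN)$ recalled in \eqref{eq:9}; for $u=\alpha\in L$ this equals $v\alpha\otimes m\otimes n+v\otimes m\cdot\alpha\otimes n-v\otimes m\otimes\alpha\cdot n$, not just your middle term, and it is precisely these correction terms that make the map descend through $\otimes_{S^e}$ and $\otimes_U$. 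So both the location of the key hypothesis in (1) and the inverse in (2) need to be repaired before your outline becomes a proof.
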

\begin{proof}
  (1) Following part (3) of Lemma~\ref{sec:left-right-u}, there is a
  left $U$-module structure on $U \otimes_S N$ such that, for all
  $\alpha \in L$, $v\in U$ and $n\in N$,
  \[
  \alpha \cdot (v \otimes n) = -v\alpha \otimes n + v \otimes \alpha
  \cdot n\,.
  \]
  Therefore, there is a left $U$-module structure on
  $(U\otimes_S N) \otimes_S U$ (see \eqref{eq:49}) such that, for all
  $\alpha\in L$, $n\in N$ and $u,v\in U$,
  \[
  \begin{array}{rcl}
  \alpha \cdot ((v \otimes n) \otimes u)
    & = &
          \alpha \cdot (v \otimes n) \otimes u + (v \otimes n) \otimes \alpha
           u \\
    & = &
          -(v\alpha \otimes n )\otimes u + (v \otimes \alpha \cdot n)
          \otimes u + (v\otimes n) \otimes \alpha u\,.
  \end{array}
  \]
  Under the canonical identification
  \[
  \begin{array}{rcl}
    N \otimes_{S^e} U^e & \to & (U \otimes_S N) \otimes_S U \\
    n \otimes (u \otimes v) & \mapsto & (v \otimes n) \otimes u\,,
  \end{array}
  \]
  $N \otimes_{S^e} U^e$ inherits of a left $U$-module structure which
  is the one claimed in the statement.

  Now, $N\otimes_{S^e} U^e$ inherits a right $U^e$-module structure
  from $U^e$. This structure is compatible with the left $U$-module
  discussed previously so as to yield a left
  $U\otimes (U^e)^\mathrm{op}$-module structure.

  \medskip

  (2) Due to (1), there is a right $U^e$-module structure on
  $M\otimes_U (N\otimes_{S^e} U^e)$. It is considered here as a left
  $U^e$-module structure such that, for all $u,v,u',v'\in U$, $m\in M$
  and $n\in N$,
  \begin{equation}
    \label{eq:35}
    (u' \otimes v') \cdot (m \otimes (n \otimes (u \otimes v)))
    =
    m \otimes (n \otimes (uv' \otimes u'v))\,;
  \end{equation}
  For the ease of reading, note that in $F(M\otimes_S N)$,
  \begin{equation}
    \label{eq:9}
    \begin{array}{rcl}
      (u \otimes 1) \cdot (v \otimes m \otimes n)
      & \!\!\!\!= &
                    \!\!\!\!uv \otimes m \otimes n \\
      (1 \otimes \alpha) \cdot (v \otimes m \otimes n)
      & \!\!\!\!= &
                    \!\!\!\! v\alpha \otimes m \otimes n 
                    + v \otimes m\cdot \alpha \otimes n 
                    - v \otimes m \otimes \alpha \cdot n\,,
    \end{array}
  \end{equation}
  and, in $M\otimes_U (N \otimes_{S^e} U^e)$,
  \begin{equation}
    \label{eq:10}
    m \cdot \alpha \otimes n \otimes u \otimes v
    =
    m \otimes \alpha \cdot n \otimes u \otimes v \\
    + m \otimes n \otimes \alpha u \otimes v \\
    - m \otimes n \otimes u \otimes v \alpha\,.
  \end{equation}
  The $R$-linear mapping from $U\otimes M \otimes N$ to $M \otimes_U
  (N \otimes_{S^e}U^e)$ given by
  \[
  v \otimes m \otimes n \mapsto m \otimes (n \otimes (1 \otimes v))
  \]
  induces a morphism of $S$-modules from $U \otimes_S (M\otimes_S N)$ to
  $M \otimes_U (N \otimes_{S^e} U^e)$ such as in the statement of the
  lemma. Denote it by $\Psi'$:
  \[
  \Psi' \colon U \otimes_S (M \otimes_S N) \to M \otimes_U (N
  \otimes_{S^e} U^e)\,.
  \]
  This is a morphism of left $U^e$-modules. Indeed, for all $u,v\in U$,
  $m\in M$, $n\in N$ and $\alpha\in L$,
  \[
  \begin{array}{rcl}
    \Psi'((u \!\otimes\! 1) \cdot (v \!\otimes\! m \!\otimes\! n))
    & \!\!\!\!\!\!\!\!= & \!\!\!\!\!\!\!\!
                          \Psi'(uv \!\otimes\! m \!\otimes\! n) =
                          m \!\otimes\! n \!\otimes\! 1 \!\otimes\! uv \\
    & \!\!\!\!\!\!\!\!\underset{~(\ref{eq:35})}
      = &\!\!\!\!\!\!\!\!
          (u \!\otimes\! 1) \cdot \Psi'(v \!\otimes\! m \!\otimes\! n) \\
    \\
    \Psi'((1 \!\otimes\! \alpha) \cdot (v \!\otimes\! m \!\otimes\! n))
    & \!\!\!\!\!\!\!\!= & \!\!\!\!\!\!\!\!
                          \Psi'(v \alpha \!\otimes\! m \!\otimes\! n + v \!\otimes\! m \cdot \alpha
                          \!\otimes\! n 
                          - v \!\otimes\! m \!\otimes\! \alpha \cdot n) \\
    & \!\!\!\!\!\!\!\!= & \!\!\!\!\!\!\!\!
                          m \!\otimes\! n \!\otimes\! 1 \!\otimes\! v \alpha
                          + m \cdot \alpha \!\otimes\! n \!\otimes\! 1 \!\otimes\! v
                          - m \!\otimes\! \alpha \cdot n \!\otimes\! 1 \!\otimes\! v \\
    & \!\!\!\!\!\!\!\!\underset{\eqref{eq:10}}{=} &\!\!\!\!\!\!\!\!
                                                    m \!\otimes\! n \!\otimes\! \alpha \!\otimes\! v
    \\
    & \!\!\!\!\!\!\!\!\underset{~(\ref{eq:35})}
      = &\!\!\!\!\!\!\!\!
          (1 \!\otimes\! \alpha) \cdot \Psi'(v \!\otimes\! m \!\otimes\! n)\,.
  \end{array}
  \]
  Consider the following morphism of $S$-modules
  \[
  \begin{array}{crcl}
    \phi \colon & M \otimes_S (N \otimes_{S^e} U^e) & \to & F(M
                                                            \otimes_S N)
    \\
                & m \otimes (n \otimes (u \otimes v)) & \mapsto & (1 \otimes u)
                                                                  \cdot (v \otimes m
                                                                  \otimes n)\,.
  \end{array}
  \]
  Given $m\in M$, $n\in N$, $u,v\in U$ and $\alpha \in L$, then the
  image under $\phi$ of the term
  \[
  m \otimes \alpha \cdot n \otimes u \otimes v
  +  m \otimes n \otimes \alpha u \otimes v
  -  m \otimes n \otimes u \otimes v \alpha
  \]
  is equal to
  \[
  (1 \otimes u) \cdot (v \otimes m \otimes \alpha \cdot n)
  + (1\otimes \alpha u) \cdot (v \otimes m \otimes n)
  - (1 \otimes u) \cdot (v \alpha \otimes m \otimes n)\,,
  \]
  which is equal to
  \[
  (1 \otimes u) \cdot (v \otimes m \otimes \alpha \cdot n)
  + (1 \otimes u) \cdot (1\otimes \alpha) \cdot (v \otimes m \otimes n)
  - (1 \otimes u) \cdot (v\alpha \otimes m \otimes n)\,.
  \]
  In view of \eqref{eq:9}, this is equal to
  \[
  (1 \otimes u) \cdot (v \otimes m\cdot \alpha \otimes n) = \phi( m
  \cdot \alpha \otimes (n \otimes (u \otimes v)))\,.
  \]
  Thus, $\phi$ induces a morphism of $S$-modules
  \[
  \begin{array}{crcl}
    \Phi' \colon & M \otimes_U (N \otimes_{S^e} U^e) & \to & F(M
                                                            \otimes_S N)
    \\
                & m \otimes (n \otimes (u \otimes v)) & \mapsto & (1 \otimes u) \cdot
                                                                  (v \otimes m \otimes
                                                                  n)\,.
  \end{array}
  \]
  It appears that $\Phi'$ is left and right inverse for $\Psi'$. Indeed,
  \begin{itemize}
  \item $\Phi'\circ \Psi' = \mathrm{Id}_{F(M \otimes_S N)}$, and
  \item for all $u,v\in U$, $m\in M$ and $n\in N$,
    \[
    \begin{array}{rcll}
      \Psi' \circ \Phi'(m \otimes n \otimes u \otimes v)
      & = &
            \Psi'((1\otimes u) \cdot (v \otimes m \otimes n)) \\
      & = &
            (1\otimes u) \cdot \Psi'(v \otimes m \otimes n)
      & \text{\tiny ($\Psi'$ is $U^e$-linear)} \\
      & = &
            (1\otimes u) \cdot (m \otimes n \otimes 1 \otimes v) \\
      & = &
            m \otimes n \otimes u \otimes v\,.
    \end{array}
    \]
  \end{itemize}
\end{proof}

\subsection{Invertible $U$-bimodules}
\label{sec:invert-u-bimod-1}
The following result is used in Section~\ref{sec:proof-main-theorem}
in order to prove that $\Ext^i_{U^e}(U,U^e)$ is invertible as a
$U$-bimodule, under suitable conditions.
\begin{prop}
  \label{sec:4}
  Let $R$ be a commutative ring. Let $(S,L)$ be a Lie-Rinehart algebra
  over $R$. Denote by $U$ its enveloping algebra.
  Let $N$ be a left $U$-module. Assume that $N$ is invertible as an
  $S$-module. Then $F(N)$ is invertible as a $U$-bimodule.
\end{prop}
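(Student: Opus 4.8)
The plan is to show that $F(N)$ is invertible by exhibiting an explicit inverse, namely $F(N^\vee)$ where $N^\vee = \Hom_S(N,S)$, and checking that $F(N)\otimes_U F(N^\vee)\simeq U$ and $F(N^\vee)\otimes_U F(N)\simeq U$ as $U$-bimodules. First I would observe that, since $N$ is invertible as an $S$-module, it is in particular finitely generated projective of rank one over $S$, so $N^\vee$ is again a left $U$-module (the $S\rtimes L$-module $\Hom_S(N,S)$ is a left $U$-module because $N$ is a left $U$-module, using \eqref{eq:46}; alternatively, $N$ being a left $U$-module is trivially a compatible left $S\rtimes L$-module and one invokes Lemma~\ref{sec:left-right-u}(1) applied to $M=S$). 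The evaluation and coevaluation maps $N\otimes_S N^\vee \xrightarrow{\sim} S$ and $S \xrightarrow{\sim} N^\vee\otimes_S N$ are then isomorphisms of left $U$-modules, where the tensor products carry the left $U$-module structure of \eqref{eq:49}; this is because these maps are $S$-linear isomorphisms and, being canonical, commute with the Lie derivative action of $L$ (a routine check on \eqref{eq:46} and \eqref{eq:49}).

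The key computational step is to identify $F(N)\otimes_U F(N')$ with $F(N\otimes_S N')$ for left $U$-modules $N, N'$ — or at least to do this when $N'$ is invertible over $S$, which is all that is needed. Concretely, $F(N)\otimes_U F(N') = (U\otimes_S N)\otimes_U(U\otimes_S N')$, and the map sending $(u\otimes n)\otimes(u'\otimes n')$ to $u u' \otimes (u'^{-1}\cdot{}?\,...)$ does not make literal sense, so instead I would use the right $U$-module structure of $F(N)$: the assignment $(u\otimes n)\otimes(u'\otimes n') \mapsto$ the class obtained by absorbing $u'$ is governed by the formula $(u\otimes n)\cdot\alpha = u\alpha\otimes n - u\otimes\alpha\cdot n$. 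The clean way is to build the natural transformation $F(N)\otimes_U F(N')\to F(N\otimes_S N')$, $u\otimes n\otimes u'\otimes n' \mapsto u\cdot\big((u'\otimes n)\text{-part}\big)$; more precisely one checks that $U\otimes_S N\otimes_U U\otimes_S N'\cong U\otimes_S(N\otimes_S N')$ via $u\otimes n\otimes u'\otimes n'\mapsto u\cdot(u'\otimes(n\otimes n'))$ using the left $U$-action of Lemma~\ref{sec:left-right-u}(3) on $U\otimes_S(N\otimes_S N')$ — here one uses that over $U$ one may move $u'$ across the tensor-over-$U$ at the cost of the derivation terms, and these terms match exactly the left $U$-module structure on $N\otimes_S N'$. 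I would verify this is a $U$-bimodule isomorphism by writing down the inverse $u\otimes(n\otimes n')\mapsto u\otimes n\otimes 1\otimes n'$ and checking both composites are the identity, which is a direct manipulation with the explicit formulas in \ref{sec:functor-fcolon-modu}.

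Granting the natural isomorphism $F(N)\otimes_U F(N')\simeq F(N\otimes_S N')$, the result follows immediately: $F(N)\otimes_U F(N^\vee)\simeq F(N\otimes_S N^\vee)\simeq F(S) = U\otimes_S S = U$ as $U$-bimodules, and symmetrically $F(N^\vee)\otimes_U F(N)\simeq F(N^\vee\otimes_S N)\simeq F(S)=U$; here I use that $F(S)=U$ as a $U$-bimodule, which is clear from the definitions since the right $U$-action on $U\otimes_S S$ via $(u\otimes 1)\cdot\alpha = u\alpha\otimes 1 - u\otimes\alpha\cdot 1 = u\alpha\otimes 1$ (as $\alpha\cdot 1 = \partial_\alpha(1)=0$ in the left $U$-module $S$) is just right multiplication. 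Hence $F(N)$ is invertible as a $U$-bimodule, with inverse $F(N^\vee)$.

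**Main obstacle.** The substantive point — and the step I expect to take real care with — is establishing the monoidal-type isomorphism $F(N)\otimes_U F(N')\simeq F(N\otimes_S N')$: one must check that the derivation correction terms appearing when commuting an element of $U$ past the balanced tensor product $\otimes_U$ are precisely absorbed by the diagonal $L$-action on $N\otimes_S N'$ from \eqref{eq:49}, and that the candidate maps in both directions are well defined on the relevant quotients (in particular that the map out of $F(N)\otimes_U F(N')$ respects the $U$-balancing, which is where the left $U$-module structure of Lemma~\ref{sec:left-right-u}(3) is indispensable). Everything else — invertibility of $N^\vee$ over $S$, $U$-linearity of evaluation/coevaluation, and $F(S)=U$ — is formal.
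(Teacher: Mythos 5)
Your route is genuinely different from the paper's: the paper never produces an explicit inverse bimodule, but instead proves that the two evaluation maps $F(N)\otimes_U\Hom_U(F(N),U)\to U$ and $\Hom_U(F(N),U)\otimes_U F(N)\to U$ are isomorphisms, by transporting them, via the isomorphism $\theta\colon\Hom_S(N,U)\to\Hom_U(F(N),U)$ and the bimodule isomorphisms of Lemmas~\ref{sec:2} and~\ref{sec:3}, to the $S$-level evaluations $N\otimes_S\Hom_S(N,U)\to U$ and $\Hom_S(N,U)\otimes_S N\to U$, which are bijective because $N$ is invertible over $S$. Your plan --- establish $F(N)\otimes_UF(N')\simeq F(N\otimes_SN')$ and apply it with $N'=N^\vee$ --- is a legitimate alternative (and would give a slightly stronger, monoidal, statement); the points about $N^\vee$ being a left $U$-module via \eqref{eq:46}, the $U$-linearity of (co)evaluation, and $F(S)\simeq U$ are fine.

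However, the justification you offer for the key isomorphism fails at exactly the step you single out as the main obstacle. First, a left $U$-module is \emph{not} ``trivially a compatible left $S\rtimes L$-module'': by the remark in \ref{sec:10}, a module that is both must have trivial $L$-action, since compatibility \eqref{eq:4} differs from the $U$-module identity by the term $\alpha(s)\cdot n$. Hence Lemma~\ref{sec:left-right-u} cannot be invoked for $N$, nor for $N\otimes_SN'$, and its part (3) --- which you call indispensable --- does not apply: the cancellation in its proof uses precisely the extra term of \eqref{eq:4} that a $U$-module lacks, so \eqref{eq:50} with $M=U$ and second factor $N\otimes_SN'$ is not a $U$-module structure. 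Second, the explicit assignment $u\otimes n\otimes u'\otimes n'\mapsto u\cdot\bigl(u'\otimes(n\otimes n')\bigr)$ does not descend to the balanced product: already for $s\in S$ the two representatives $(u\otimes n)\cdot s\otimes(u'\otimes n')$ and $(u\otimes n)\otimes s\cdot(u'\otimes n')$ are sent to $u\cdot\bigl(u's\otimes(n\otimes n')\bigr)$ and $u\cdot\bigl(su'\otimes(n\otimes n')\bigr)$, which differ because $su'\neq u's$ in $U$ (take $u'=\alpha$, so $[s,\alpha]=-\alpha(s)$); a similar failure occurs for $\alpha\in L$. The repair is to absorb $u'$ with the \emph{right} $U$-action of $F(N)$ rather than act by $u$ on the left: since the left $U$-action on $F(N')=U\otimes_SN'$ is multiplication on the first factor, ordinary tensor associativity gives $F(N)\otimes_UF(N')\simeq F(N)\otimes_SN'\simeq U\otimes_S(N\otimes_SN')$, i.e.\ $(u\otimes n)\otimes(u'\otimes n')\mapsto\bigl((u\otimes n)\cdot u'\bigr)\otimes n'$; one then checks that the transported right $U$-action is $(u\otimes(n\otimes n'))\cdot\alpha=u\alpha\otimes(n\otimes n')-u\otimes\alpha\cdot(n\otimes n')$ with the diagonal action \eqref{eq:49}. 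Equivalently, your map $u\otimes(n\otimes n')\mapsto(u\otimes n)\otimes(1\otimes n')$ \emph{is} a well-defined $U$-bimodule morphism, and this associativity isomorphism is its two-sided inverse. With that correction your argument goes through.
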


The subsection is devoted to the proof of this proposition.
Given a left $U$-module $N$, then $F(N) = U \otimes_S N$ as left
$U$-modules. Hence, there is a functorial isomorphism
\begin{equation}
  \label{eq:40}
  \theta \colon \Hom_S(N,U) \to \Hom_U(F(N),U)\,.
\end{equation}

Note:
\begin{itemize}
\item $\Hom_S(N,U)$ is a left $U$-module (see (\ref{eq:46})), and it
  inherits a right $U$-module structure from $U_U$; by construction,
  these two structures form a $U$-bimodule structure.
\item  $\Hom_U(F(N),U)$ is a $U$-bimodule because so are $F(N)$ and
  $U$. 
\item  $N \otimes_S \Hom_S(N,U)$ is a left $U$-module (see
  (\ref{eq:49})), and it inherits  a right $U$-module structure
  from $U_U$; by construction, these two structures form a
  $U$-bimodule structure.
\end{itemize}
\begin{lem}
  \label{sec:2}
  Let $N$ be a left $U$-module. Then,
  \begin{enumerate}
  \item $\theta \colon \Hom_S(N,U) \to \Hom_U(F(N),U)$ is an
    isomorphism in $\Mod(U^e)$,
  \item the mapping
    \[
    \begin{array}{cccc}
      \Phi \colon & N \otimes_S \Hom_S(N,U) & \to & F(N) \otimes_U
                                                    \Hom_U(F(N),U) \\
                  & n \otimes f & \mapsto & (1 \otimes n) \otimes \theta(f)
    \end{array}
    \]
    is an isomorphism in $\Mod(U^e)$, and
  \item the diagram
    \[
    \xymatrix{
      {\scriptstyle N \otimes_S \Hom_S(N,U)} \ar[r] \ar[d]_{\Phi} & U
      \ar@{=}[d] \\
      {\scriptstyle F(N) \otimes_U \Hom(F(N),U)} \ar[r] & U
    }
    \]
    with horizontal arrows given by evaluation, is commutative.
  \end{enumerate}
\end{lem}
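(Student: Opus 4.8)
The plan is to verify the three assertions of Lemma~\ref{sec:2} in order, using the adjunction isomorphism of \ref{sec:adjunction-between-f} together with the explicit formula \eqref{eq:46} for the left $L$-action on $\Hom_S(N,N')$.

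\textbf{Part (1).} I already have from \eqref{eq:40} that $\theta$ is a functorial isomorphism of $S$-modules, indeed it is the special case $M=U$ of the adjunction $\Hom_U(N,G(M))\xrightarrow{\sim}\Hom_{U^e}(F(N),M)$ once one checks that $G(U)=U^S$ is $U$ itself (with $U^e$-module structure given by left and right multiplication), so that $\Hom_U(N,G(U))=\Hom_S(N,U)$ with the left $U$-module structure of \eqref{eq:46}. So it only remains to check that $\theta$ is compatible with the two right $U$-module structures, i.e. that $\theta(f\cdot u)=\theta(f)\cdot u$ for $f\in\Hom_S(N,U)$ and $u\in U$; here $f\cdot u$ is post-multiplication by $u$ in the target $U_U$, and $\theta(f)\cdot u$ is the right action coming from $U$ in $\Hom_U(F(N),U)$. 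Unravelling $\theta$, which sends $f$ to the map $u'\otimes n\mapsto u'\cdot f(n)$, this is immediate: $\theta(f\cdot u)(u'\otimes n)=u'f(n)u=\big(\theta(f)(u'\otimes n)\big)u=(\theta(f)\cdot u)(u'\otimes n)$. And it is already known from the generalities recalled before the lemma that both sides carry genuine $U$-bimodule structures, so $\theta$ is an isomorphism in $\Mod(U^e)$.

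\textbf{Part (2).} Here I would use part (1) to replace $\Hom_U(F(N),U)$ by $\Hom_S(N,U)$, so that $F(N)\otimes_U\Hom_U(F(N),U)$ becomes $(U\otimes_SN)\otimes_U\Hom_S(N,U)$. The canonical isomorphism $(U\otimes_SN)\otimes_U M'\xrightarrow{\sim}U\otimes_S(N\otimes_\bullet M')$-type simplification collapses this to $N\otimes_S\Hom_S(N,U)$, through $(1\otimes n)\otimes g\mapsto n\otimes g$ using that $(u\otimes n)\otimes g=(1\otimes n)\otimes(g\cdot u)$ is the reason the $U\otimes_S$ on the left may be absorbed; one then tracks how the outer left $U$-action and the residual right $U$-action (inherited from $U_U$ inside $\Hom_S(N,U)$) transport. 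The left $U$-action on $N\otimes_S\Hom_S(N,U)$ that results is exactly the one of \eqref{eq:49} combined with the right $U$-module structure on $\Hom_S(N,U)$, which is the bimodule structure described in the third bullet before the lemma; the right $U$-action is post-multiplication inside $\Hom_S(N,U)$. So $\Phi$ is, up to the identification of part (1), a composite of canonical isomorphisms, hence an isomorphism in $\Mod(U^e)$. The only genuine verification is $U^e$-linearity of the collapsing map, which is the same kind of bookkeeping already carried out at length in the proof of Lemma~\ref{sec:1}; I would spell out the two checks "left $U$-linear" and "right $U$-linear" using \eqref{eq:49} and the formula $(f\cdot u)(n)=f(n)u$.

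\textbf{Part (3).} This is a direct diagram chase on generators. Chasing $n\otimes f$ along the top, the evaluation map $N\otimes_S\Hom_S(N,U)\to U$ sends it to $f(n)$. Along the left edge $\Phi$ sends $n\otimes f$ to $(1\otimes n)\otimes\theta(f)$, and then the evaluation $F(N)\otimes_U\Hom_U(F(N),U)\to U$ sends this to $\theta(f)(1\otimes n)=1\cdot f(n)=f(n)$. So both routes give $f(n)$ and the square commutes.

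\textbf{Main obstacle.} None of the three parts is conceptually hard; the one place demanding care is part (2), specifically checking that the canonical "absorption" isomorphism $(U\otimes_SN)\otimes_U\Hom_S(N,U)\cong N\otimes_S\Hom_S(N,U)$ is compatible with \emph{both} the left $U$-action of \eqref{eq:49} and the right $U$-action, and that under $\theta$ from part (1) these match the $U^e$-structure appearing in the statement. This is exactly the sort of multi-index verification that was done explicitly for $\Psi'$ and $\Phi'$ in Lemma~\ref{sec:1}, and I would present it in the same style, checking linearity on the two distinguished families of elements $u\otimes 1$ and $1\otimes\alpha$ that generate $U^e$ together with $S$. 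Everything else is formal.
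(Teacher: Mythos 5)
Your part (3) is fine, and so is the verification that $\theta$ commutes with the right $U$-actions. But part (1) has a genuine gap: the justification you give for the remaining compatibility is based on a false identification. The adjunction of \ref{sec:adjunction-between-f} at $M=U$ gives $\Hom_U(N,G(U))\simeq\Hom_{U^e}(F(N),U)$, whose target consists of \emph{bimodule} homomorphisms, not the $\Hom_U(F(N),U)$ of the lemma; moreover $G(U)=U^S$ is not $U$: for $\alpha\in L$ with nonzero anchor one has $\alpha s-s\alpha=\alpha(s)\neq 0$, so $\alpha\notin U^S$, and in any case $\Hom_U(N,G(U))$ is not $\Hom_S(N,U)$. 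The bijectivity of $\theta$ is indeed free (it is \eqref{eq:40}, i.e.\ ordinary extension of scalars along $S\to U$), but the left $U$-linearity of $\theta$ --- where the left action on $\Hom_S(N,U)$ is \eqref{eq:46} and that on $\Hom_U(F(N),U)$ comes from the right $U$-structure of $F(N)$ --- is the whole content of part (1) and is never established by you; "both sides carry bimodule structures" does not make $\theta$ respect them. The needed check is the short computation $\theta(\alpha\cdot f)(u\otimes n)=u(\alpha f(n)-f(\alpha\cdot n))=\theta(f)((u\otimes n)\cdot\alpha)=(\alpha\cdot\theta(f))(u\otimes n)$, which is exactly what the paper does.

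Part (2) has a similar problem. The "absorption" $(U\otimes_S N)\otimes_U\Hom_S(N,U)\cong N\otimes_S\Hom_S(N,U)$ is \emph{not} one of the standard canonical isomorphisms, because the right $U$-module structure of $F(N)$ is twisted by the $L$-action on $N$: $(u\otimes n)\cdot\alpha=u\alpha\otimes n-u\otimes\alpha\cdot n$. The relation you invoke as the reason the collapse works, $(u\otimes n)\otimes g=(1\otimes n)\otimes(g\cdot u)$, is false: the balancing over $U$ identifies $(x\cdot u)\otimes g$ with $x\otimes(u\cdot g)$, using the \emph{right} action on $F(N)$ and the \emph{left} action on the $\Hom$; the left action of $u$ on $F(N)$ does not pass across $\otimes_U$, and $g\cdot u$ is the wrong action. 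Consequently the isomorphism is not automatic, and checking that your collapsing map is $U^e$-linear is not what is missing: what must be proved is (i) that $(u\otimes n)\otimes g\mapsto u\cdot(n\otimes\theta^{-1}(g))$ is well defined, i.e.\ $U$-balanced for the twisted action --- a computation that uses part (1) together with \eqref{eq:46} and \eqref{eq:49} --- and (ii) that the two composites are identities, the nontrivial direction using left $U$-linearity and the fact that $F(N)\otimes_U\Hom_U(F(N),U)$ is generated as a left $U$-module by the image of $(1\otimes N)\otimes\Hom_U(F(N),U)$. These are precisely the verifications carried out in the paper's proof; as written, your argument replaces them by a false identity and a claim of canonicity, so the crux of the lemma is not proved.
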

\begin{proof}
  (1) By definition, $\theta$ is a morphism of right $U$-modules. It
  is also a morphism of left $U$-modules because, for all $n \in N$,
  $f \in \Hom_S(N,U)$, $u \in U$ and $\alpha \in L$,
  \[
  \begin{array}{rcl}
    \theta(\alpha \cdot f) (u \otimes n)
    & = &
          u (\alpha \cdot f)(n) \\
    & = &
          u (\alpha f(n) - f(\alpha \cdot n))
     = 
          \theta(f) (u \alpha \otimes n - u \otimes \alpha \cdot n) \\
    & = &
          \theta(f) ((u \otimes n) \cdot \alpha)
     = 
          (\alpha \cdot \theta(f))(u \otimes n)\,.
  \end{array}
  \]

  \medskip

  (2) By definition, $\Phi$ is a morphism of right $U$-modules. It is
  also a morphism of left $U$-modules because, for all $n \in N$,
  $f\in \Hom_S(N,U)$ and $\alpha \in L$,
  \[
  \begin{array}{rcl}
    \Phi( \alpha \cdot (n \otimes f))
    & = &
          \Phi( \alpha \cdot n \otimes f + n \otimes \alpha \cdot f)
    \\
    & = &
          (1 \otimes \alpha \cdot n) \otimes \theta(f)
          + (1 \otimes n) \otimes
          \underset{=\alpha \cdot \theta(f)}{\underbrace{\theta(\alpha
          \cdot f)}} \\
    & = &
          (1 \otimes \alpha \cdot n) \otimes \theta(f)
          + \underset{= \alpha \otimes n - 1 \otimes \alpha \cdot n}
          {\underbrace{(1 \otimes n) \cdot \alpha}} \otimes \theta(f)
    \\
    & = &
          (\alpha \otimes n) \otimes \theta(f) \\
    & = &
          \alpha \cdot \Phi( n \otimes f)\,.
  \end{array}
  \]
  In order to prove that $\Phi$ is bijective, consider the linear
  mapping
  \[
  \begin{array}{cccc}
    \psi \colon & F(N) \otimes_S \Hom_U(F(N),U) & \to & N \otimes_S
                                                        \Hom_S(N,U) \\
                & (u \otimes n) \otimes g & \mapsto & u \cdot (n \otimes
                                                      \theta^{-1}(g))\,.
  \end{array}
  \]
  Note that, for all $u \in U$, $\alpha \in L$, $n \in N$ and $g\in
  \Hom_U(F(N),U)$,
  \[
  \begin{array}{rcl}
    \psi((u \otimes n) \cdot \alpha \otimes g)
    & \!\!= &\!\!
              \psi((u \alpha \otimes n) \otimes g - (u \otimes \alpha
              \cdot n) \otimes g) \\
    & \!\!= &\!\!
              u \alpha \cdot (n \otimes \theta^{-1}(g)) - u \cdot (\alpha
              \cdot n \otimes \theta^{-1}(g)) \\
    & \!\!= &\!\!
              u \cdot (\alpha \cdot n \otimes \theta^{-1}(g) + n \otimes
              \alpha \cdot \theta^{-1}(g)) 
              - u \cdot (\alpha \cdot n \otimes \theta^{-1}(g)) \\
    & \!\!= &\!\!
              u \cdot (n \otimes \theta^{-1}(\alpha \cdot g))\ \ \ \ \
              \ \text{(see
              part (1))}\\
    & \!\!= &\!\!
              \psi((u \otimes n) \otimes \alpha \cdot g)\,.
  \end{array}
  \]
  Hence, $\psi$ induces a linear mapping
  \[
  \begin{array}{rrcl}
    \Psi \colon & F(N) \otimes_U \Hom_U(F(N),U) & \to & N \otimes_S
                                                        \Hom_S(N,U) \\
                & (u \otimes n) \otimes g & \mapsto & u \cdot (n \otimes \theta^{-1}(g))\,.
  \end{array}
  \]
  Now, by definition of $\Phi$ and $\Psi$,
  \[
  \Psi \circ \Phi = \mathrm{Id}_{N \otimes_S \Hom_S(N,U)}\,.
  \]
  Since
  \begin{itemize}
  \item $\Psi$ is a morphism of left $U$-modules by construction;
  \item as a left $U$-module, $F(N) \otimes_U \Hom_U(F(N),U)$ is
    generated by the image of 
    $(1 \otimes N) \otimes \Hom_U(F(N),U)$; and
  \item for all $n \in N$ and $g\in \Hom_U(F(N),U)$,
    \[
    \Phi \circ \Psi((1 \otimes n) \otimes g) = (1 \otimes n) \otimes
    g\,;
    \]
  \end{itemize}
  the following holds
  \[
  \Phi \circ \Psi = \mathrm{Id}_{F(N) \otimes_U \Hom_U(F(N),U)}\,.
  \]
  Altogether, these considerations show that $\Phi$ is an isomorphism
  in $\Mod(U^e)$.

  \medskip

  (3) The diagram is commutative by definition of $\Phi$.
\end{proof}

Like in the previous lemma, for all $N \in \Mod(U)$, $\Hom_S(N,U)$ is
a $U$-bimodule, and hence $\Hom_S(N,U) \otimes_S N$ is a $U$-bimodule
by means of (\ref{eq:49}) and the right $U$-module structure of $U$.
\begin{lem}
  \label{sec:3}
  Let $N$ be a left $U$-module. Then,
  \begin{enumerate}
  \item the mapping
    \[
    \begin{array}{rrcl}
      \Phi' \colon & \Hom_S(N,U) \otimes_S N & \to & \Hom_U(F(N),U)
                                                     \otimes_U F(N) \\
                   & f \otimes n & \mapsto & \theta(f) \otimes (1
                                             \otimes n)
    \end{array}
    \]
    is an isomorphism in $\Mod(U^e)$; and
  \item the diagram
    \[
    \xymatrix{
      {\scriptstyle \Hom_S(N,U) \otimes_S N} \ar[r] \ar[d]_{\Phi'} & U \ar@{=}[d] \\
      {\scriptstyle \Hom_U(F(N),U) \otimes_U F(N)} \ar[r] & U
    }
    \]
    with horizontal arrows  given by evaluation, is commutative.
  \end{enumerate}
\end{lem}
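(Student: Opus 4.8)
The plan is to prove Lemma~\ref{sec:3} by mirroring the proof of parts~(2) and~(3) of Lemma~\ref{sec:2}, exchanging the roles of the two tensor factors and re-using the isomorphism of $U$-bimodules $\theta\colon\Hom_S(N,U)\to\Hom_U(F(N),U)$ established in Lemma~\ref{sec:2}(1). For part~(1), I would first note that $f\otimes n\mapsto\theta(f)\otimes(1\otimes n)$ descends to $\otimes_S$, and then check that the resulting map $\Phi'$ is left $U$-linear: this is formally the same computation as for $\Phi$ in Lemma~\ref{sec:2}(2) --- expand $\alpha\cdot(f\otimes n)$ via~\eqref{eq:49} and~\eqref{eq:46}, use that $\theta$ is left $U$-linear, and use the identity $(1\otimes n)\cdot\alpha=\alpha\otimes n-1\otimes\alpha\cdot n$ valid in $F(N)$. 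The less routine point is right $U$-linearity of $\Phi'$: the right $U$-action on $\Hom_U(F(N),U)\otimes_U F(N)$ is induced by the right $U$-action on $F(N)$, and after pushing $\theta(f)\otimes\bigl((1\otimes n)\cdot\alpha\bigr)$ across the $\otimes_U$ balancing relation and invoking the right $U$-linearity of $\theta$, one should recover precisely the image under $\Phi'$ of the element obtained by letting $\alpha$ act on $f\otimes n$ in the right $U$-module $\Hom_S(N,U)\otimes_S N$.

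To prove that $\Phi'$ is bijective I would construct its inverse as in Lemma~\ref{sec:2}: define an $R$-linear map $\psi'\colon\Hom_U(F(N),U)\otimes_S F(N)\to\Hom_S(N,U)\otimes_S N$ by $g\otimes(u\otimes n)\mapsto(\theta^{-1}(g)\otimes n)\cdot u$, where $\cdot\,u$ denotes the right $U$-action of the bimodule $\Hom_S(N,U)\otimes_S N$; using the right $U$-linearity of $\theta$ one checks that $\psi'$ factors through $\Hom_U(F(N),U)\otimes_U F(N)$, giving a map $\Psi'$. Then $\Psi'\circ\Phi'=\mathrm{Id}$ is immediate from $\theta^{-1}\circ\theta=\mathrm{Id}$, while $\Phi'\circ\Psi'=\mathrm{Id}$ follows either by a direct computation (moving $u$ across the $\otimes_U$ by right $U$-linearity of $\theta$) or, as in Lemma~\ref{sec:2}(2), from the fact that $\Hom_U(F(N),U)\otimes_U F(N)$ is generated as a right $U$-module by the image of $\Hom_U(F(N),U)\otimes(1\otimes N)$, on which the equality is obvious. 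Hence $\Phi'$ is an isomorphism in $\Mod(U^e)$. Part~(2) is then immediate from the definitions: the top evaluation sends $f\otimes n$ to $f(n)$, and the bottom one sends $\theta(f)\otimes(1\otimes n)$ to $\theta(f)(1\otimes n)=1\cdot f(n)=f(n)$, so the square commutes on the generators $f\otimes n$, hence everywhere.

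The main obstacle is the bookkeeping of the $U$-module structures in part~(1). Since $F(N)=U\otimes_S N$ has a left $U$-module structure given simply by multiplication on the $U$-factor but a genuinely twisted right $U$-module structure, the correction terms $1\otimes\alpha\cdot n$ must be tracked carefully both when verifying that $\Phi'$ is right $U$-linear and when verifying that $\psi'$ is compatible with the $\otimes_U$ relation; this asymmetry is what prevents Lemma~\ref{sec:3} from being a verbatim transcription of Lemma~\ref{sec:2}, and where I expect the bulk of the (routine but delicate) work to lie.
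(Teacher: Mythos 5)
Your plan for the $U^e$-linearity of $\Phi'$ and for part (2) follows the paper's line and is fine; the gap is in your proof of bijectivity. The map $\psi'\colon g\otimes(u\otimes n)\mapsto(\theta^{-1}(g)\otimes n)\cdot u$ is not well defined, already before any question of factoring through $\otimes_U$: the formula depends on the representative $u\otimes n$ chosen in $F(N)=U\otimes_S N$. Comparing the two representatives $us\otimes n$ and $u\otimes sn$ of the same element, the first gives $(\theta^{-1}(g)\otimes n)\cdot(us)$ and the second gives $(\theta^{-1}(g)\otimes sn)\cdot u$, and these differ in $\Hom_S(N,U)\otimes_S N$. Writing $f=\theta^{-1}(g)$, the right $U$-action there multiplies the values of $f$ on the right in $U$, whereas the balancing relation of $\otimes_S$ only identifies $f\otimes sn$ with $(s\cdot f)\otimes n$, i.e.\ involves left multiplication by $s$ on the values; the difference of the two candidate images is the class of $\left(x\mapsto f(x)us-sf(x)u\right)\otimes n$, and the (well-defined) evaluation $f'\otimes n\mapsto f'(n)$ sends it to the commutator $f(n)us-sf(n)u\in U$, which is nonzero in general --- for instance $u=1$ and $f(n)=\alpha\in L$ with $\alpha(s)\neq 0$ gives $\alpha s-s\alpha=\alpha(s)\neq 0$. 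The reason the analogous formula works in Lemma~\ref{sec:2} is that there one acts by $u$ through the \emph{left} diagonal $U$-module structure of $N\otimes_S\Hom_S(N,U)$, whose restriction to $S$ is exactly the action used in the balancing; the right $U$-action available on $\Hom_S(N,U)\otimes_S N$ does not restrict to that balancing action, and this is precisely the asymmetry you flag in your last paragraph --- your formula does not get around it, it runs into it.

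The paper proves bijectivity by a different and shorter route that avoids any explicit inverse: since $F(N)=U\otimes_S N$, there is a canonical isomorphism of left $U$-modules $\Hom_U(F(N),U)\otimes_U F(N)\simeq\Hom_U(F(N),U)\otimes_S N$, obtained by absorbing the $U$-factor of $F(N)$ into $\Hom_U(F(N),U)$ via $g\otimes(u\otimes n)\mapsto(g\cdot u)\otimes n$; under this identification $\Phi'$ expresses as $f\otimes n\mapsto\theta(f)\otimes n$, and bijectivity follows at once from the bijectivity of $\theta$ established in Lemma~\ref{sec:2}(1). To repair your argument you should either argue through this identification, or give a genuine proof that your $\psi'$ respects the inner $\otimes_S$ of $F(N)$ --- which, as written, it does not.
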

\begin{proof}
  (1) First, since $F(N) = U \otimes_S N$ in $\Mod(U^e)$, then
  \[
  \Hom_U(F(N),U) \otimes_U F(N) \cong \Hom_U(F(N),U) \otimes_S N
  \]
  as left $U$-modules. Under this identification, $\Phi'$ expresses as
  \[
  \Phi' \colon f \otimes n \mapsto \theta(f) \otimes n\,.
  \]
  Therefore, $\Phi'$ is bijective because so is $\theta$.

  Next, $\Phi'$ is a morphism of left $U$-modules because so is
  $\theta$. And it is a morphism of right $U$-modules because it is a
  morphism of right $S$-modules, and because, for all $f \in
  \Hom_S(N,U)$, $n \in N$ and $\alpha \in L$,
  \[
  \begin{array}{rcl}
    \Phi'((f \otimes n) \cdot \alpha)
    & = &
          \Phi'(f \cdot \alpha \otimes n - f \otimes \alpha \cdot n)
    \\
    & = &
          \underset{= \theta(f) \cdot \alpha}{\underbrace{\theta(f
          \cdot \alpha)}} \otimes (1 \otimes n)
          - \theta(f) \otimes (1 \otimes \alpha \cdot n) \\
    & = &
          \theta(f) \otimes \underset{= \alpha \otimes
          n}{\underbrace{\alpha \cdot (1 \otimes n)}} - \theta(f)
          \otimes (1 \otimes \alpha \cdot n) \\
    & = &
          \theta(f) \otimes ((1 \otimes n) \cdot \alpha) \\
    & = &
          (\theta(f) \otimes (1 \otimes n)) \cdot \alpha \\
    & = &
          \Phi'(f \otimes n) \cdot \alpha\,.    
  \end{array}
  \]
  This proves (1).

  \medskip

  (2) The diagram commutes by definition of $\Phi'$.
\end{proof}

It is now possible to prove the result announced at the beginning of
the subsection.
\begin{proof}[Proof of Proposition~\ref{sec:4}]
  Since $N$ is invertible as an $S$-module, then the following
  evaluation mappings are bijective
  \[
  \begin{array}{rcl}
    N \otimes_S \Hom_S(N,U)  \to  U 
    \ \ \text{and}\ \ 
    \Hom_S(N,U) \otimes_S N  \to  U\,.
  \end{array}
  \]
  According to Lemmas~\ref{sec:2} and \ref{sec:3}, the following
  evaluation mappings are isomorphisms of $U$-bimodules
  \[
  \begin{array}{rcl}
    F(N) \otimes_U \Hom_U(F(N),U)  \to  U 
    \ \ \text{and}\ \ 
    \Hom_U(F(N),U) \otimes_U F(N)  \to  U\,.
  \end{array}
  \]
  Thus, $F(N)$ is invertible as a $U$-bimodule.
\end{proof}

\section{The action of \texorpdfstring{$L$}{L} on the inverse dualising bimodule of \texorpdfstring{$S$}{S}}
\label{sec:left-u-module}

This section introduces an action of $L$ on
$\Ext^\bullet_{S^e}(S,S^e)$ by means of Lie derivatives, which is used
to describe $\Ext^\bullet_{U^e}(U,U^e)$ in the next section. When $S$ is
projective in $\Mod(R)$, then $\Ext^\bullet_{S^e}(S,-)$ is the
Hochschild cohomology $H^\bullet(S;-)$; in this setting, the Lie
derivatives on $H^\bullet(S;S)$ and $H_\bullet(S;S)$ are
defined in \cite[Section 9]{MR0154906} and have a well-known
expression in terms of the Hochschild resolution of $S$. For the needs
of the article, the definition is translated to arbitrary coefficients in
terms of any projective resolution of $S$ in $\Mod(S^e)$.

Hence, \ref{sec:data-proj-resol} introduces preliminary material,
\ref{sec:deriv-homot} deals with derivations on projective resolutions
of $S$ in $\Mod(S^e)$, \ref{sec:lie-derivatives} defines the Lie
derivatives, \ref{sec:acti-l-extb} presents the action of $L$ on
$\Ext^\bullet_{S^e}(S,S^e)$, and \ref{sec:part-case-calabi} discusses particular situations.

For the section, a projective resolution of $S$ in $\Mod(S^e)$ is
considered;
\[
(P^\bullet,d) \to S\,.
\]
Denote $S$ by $P^1$ and the augmentation mapping
$P^0\to S$ by $d^0$.  For all $M\in \Mod(S^e)$ and
$s\in S$, denote by $\lambda_s$ and $\rho_s$ the multiplication
mappings
\[
\begin{array}{lcc}
  & \lambda_s \colon M  \longrightarrow  M, & m  \longmapsto 
                                              (s\otimes
                                              1)
                                              \cdot
                                              m \\
  \text{and} \\
  & \rho_s \colon  M  \longrightarrow  M, & m  \longmapsto  (1\otimes
                                            s) \cdot m \,.
\end{array}
\]

\subsection{Data on the projective resolution}
\label{sec:data-proj-resol}

For all $s\in S$, the mappings
$\lambda_s,\rho_s$ on $P^\bullet$  are morphisms of
complexes of left $S^e$-modules and induce the same mapping
\[
\begin{array}{rcl}
  S & \to & S \\
  t & \mapsto & st
\end{array}
\]
in cohomology; Hence, there exists a morphism of graded left
$S^e$-modules
\begin{equation}
  \label{eq:11}
  k_s \colon P^\bullet \to P^\bullet[-1]
\end{equation}
such that
\begin{equation}
  \label{eq:12}
  \lambda_s - \rho_s = k_s \circ d + d \circ k_s\,.
\end{equation}

\begin{lem}
  \label{sec:data-proj-resol-1}
  Let $\partial \colon S\to S$ be an $R$-linear derivation. Let
  $\psi\colon P^\bullet \to P^\bullet$ be a morphism of complexes of
  $R$-modules such that
  \begin{itemize}
  \item $H^0(\psi) \colon S\to S$ is the zero mapping;
  \item there exists a morphism of graded left $S^e$-modules
    \[
    k \colon P^\bullet \to P^\bullet[-1]
    \]
    such that, for all $p\in P^\bullet$ and $s,t\in S$,
    \begin{equation}
      \label{eq:14}
      \psi((s \otimes t) \cdot p)
      =
      (s \otimes t) \cdot \psi(p)
      - (1 \otimes \partial)(s \otimes t) \cdot
      (k \circ d + d \circ k)(p)\,.
    \end{equation}
    Then, there exists a morphism of graded $R$-modules
    \[
    h \colon P^\bullet \to P^\bullet[-1]
    \]
    such that
    \begin{itemize}
    \item $\psi = h \circ d + d \circ h$; and
    \item for all $s,t\in S$ and $p\in P^\bullet$,
      \[
      h((s \otimes t) \cdot p)
      =
      (s \otimes t) \cdot h(p)
      -(1 \otimes \partial)(s \otimes t) \cdot k(p)\,.
      \]
    \end{itemize}
  \end{itemize}
\end{lem}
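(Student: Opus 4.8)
The plan is to build the homotopy $h$ degree by degree, exploiting projectivity of each $P^{-n}$ as a left $S^e$-module. First I would set up the recursion: since $H^0(\psi)=0$, the composite $d^0\circ\psi^0\colon P^0\to S$ vanishes, so $\psi^0$ factors through $\ker(d^0)=\operatorname{im}(d^{-1})$; because $P^0$ is projective over $S^e$ and, crucially, $\psi^0$ need only be $R$-linear, one can at least lift $\psi^0$ through the surjection $P^{-1}\twoheadrightarrow\operatorname{im}(d^{-1})$ after checking the target lands there — here one must be careful that $\psi$ is only a morphism of complexes of $R$-modules, so the naive lifting lemma for $S^e$-modules does not apply directly. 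The correct device is to use the contracting data already encoded in $k$: equation~\eqref{eq:14} says that the ``defect of $S^e$-linearity'' of $\psi$ is governed by $k\circ d+d\circ k$, which is itself a genuine $S^e$-linear expression up to the derivation twist $(1\otimes\partial)$. So the right move is to look for $h$ satisfying the twisted linearity relation in the second bullet, and then the equation $\psi=h\circ d+d\circ h$ becomes solvable because the obstruction is $S^e$-linear.

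Concretely, I would proceed as follows. Working in nonpositive degrees, suppose $h$ has been constructed on $P^{0},P^{-1},\dots,P^{-n+1}$ satisfying both required properties in those degrees. Consider the $R$-linear map
\[
\psi^{-n} - d\circ h^{-n+1}\colon P^{-n}\to P^{-n}.
\]
One checks, using the induction hypothesis and $\psi d = d\psi$, that $d\circ(\psi^{-n}-d\circ h^{-n+1}) = 0$ composed into $P^{-n+1}$, i.e.\ $\psi^{-n}-d\circ h^{-n+1}$ has image in $\ker(d^{-n})=\operatorname{im}(d^{-n-1})$ when $n\geqslant 1$ (and in $\ker(d^0)$ when $n=0$). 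Next one computes the twisted $S^e$-linearity defect of $\psi^{-n}-d\circ h^{-n+1}$ from \eqref{eq:14} and the induction hypothesis on $h^{-n+1}$: the terms involving $(1\otimes\partial)$ combine, through the relation $k\circ d+d\circ k$ versus $d\circ k$, to show that $\psi^{-n}-d\circ h^{-n+1}-(\text{correction by }k^{-n})$ is genuinely $S^e$-linear into $\operatorname{im}(d^{-n-1})$. Then projectivity of $P^{-n}$ over $S^e$ gives an $S^e$-linear lift $g^{-n}\colon P^{-n}\to P^{-n-1}$ of this $S^e$-linear map along $d^{-n-1}$, and I set $h^{-n} := g^{-n} + (\text{the }k^{-n}\text{ correction term})$. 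By construction $d\circ h^{-n} = \psi^{-n}-d\circ h^{-n+1}$ on the relevant summand, i.e.\ $\psi^{-n}=h^{-n}\circ d + d\circ h^{-n+1}$ holds once one also verifies the $h^{-n}\circ d^{-n}$ piece, and the twisted linearity in degree $-n$ holds by the choice of the correction term and $S^e$-linearity of $g^{-n}$.

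The main obstacle I anticipate is the bookkeeping in the inductive step: precisely isolating, from the mixed $R$-linear/$S^e$-linear data, an honestly $S^e$-linear map that can be lifted by projectivity. This requires massaging \eqref{eq:14} together with the defining identity $\lambda_s-\rho_s = k_s\circ d + d\circ k_s$ and keeping careful track of which occurrences of $k$ are the ``full'' homotopy $k\circ d+d\circ k$ and which are the bare $k$; the derivation identity $\partial(st)=\partial(s)t+s\partial(t)$ enters exactly to reconcile the twist $(1\otimes\partial)(s\otimes t)$ across these. I expect no difficulty with the base case or with convergence of the recursion (it is a standard descending induction on degree, with $P^{\bullet}$ concentrated in nonpositive degrees so each step is finite), and the final verification that $\psi=h\circ d+d\circ h$ globally follows by assembling the degreewise identities.
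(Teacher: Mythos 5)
Your overall strategy coincides with the paper's: a descending induction in which one forms the remainder $\theta:=\psi^{-n}-h^{-n+1}\circ d^{-n}$, checks via $\psi d=d\psi$ and the inductive identity that it takes values in $\ker(d^{-n})=\operatorname{im}(d^{-n-1})$, and computes its linearity defect from \eqref{eq:14} together with the twisted linearity of $h^{-n+1}$; that computation indeed gives $\theta((s\otimes t)\cdot p)=(s\otimes t)\cdot\theta(p)-(1\otimes\partial)(s\otimes t)\cdot (d\circ k)(p)$. The genuine gap is at the very next step, the unexplained ``$k^{-n}$ correction term''. What your argument needs is a map $c\colon P^{-n}\to P^{-n-1}$ obeying the same twisted rule you want for $h$, namely $c((s\otimes t)\cdot p)=(s\otimes t)\cdot c(p)-(1\otimes\partial)(s\otimes t)\cdot k(p)$; then $\theta-d\circ c$ is honestly $S^e$-linear with values in $\operatorname{im}(d^{-n-1})$, projectivity lifts it to $g^{-n}$, and $h^{-n}:=g^{-n}+c$ satisfies both required identities. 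You never construct $c$, and it cannot be manufactured from $k$ by composing with $d$ or any other $S^e$-linear maps: $k$ is a morphism of graded $S^e$-modules, so every such composite is $S^e$-linear and subtracting it does not change the defect of $\theta$ at all. Producing a map with a prescribed derivation-type defect is a second, different use of projectivity than the lifting property you invoke, and it is the heart of the proof.

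The paper supplies exactly this ingredient by choosing a coordinate system $((p_i,\varphi^i))_{i\in I}$ of the projective $S^e$-module $P^{-n}$, picking $p_i'$ with $\psi^{-n}(p_i)=d(p_i')+h^{-n+1}(d(p_i))$ (possible because the remainder is a cycle and $P^\bullet\to S$ is exact there), and defining $h^{-n}(p)=\sum_i\varphi^i(p)\cdot p_i'-(1\otimes\partial)(\varphi^i(p))\cdot k^{-n}(p_i)$. The second summand is precisely the missing correction: since $(1\otimes\partial)$ is a derivation of $S^e$ and the $\varphi^i$ are $S^e$-linear, it has the required twisted defect, and the homotopy identity then follows by the computation you outline. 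With this addition your plan closes up; the remaining issues are minor slips (the remainder should be $\psi^{-n}-h^{-n+1}\circ d^{-n}$, not $\psi^{-n}-d\circ h^{-n+1}$, and no separate verification of an ``$h^{-n}\circ d$ piece'' is needed at that stage, because the identity to be solved in degree $-n$ is exactly $\psi^{-n}=h^{-n+1}\circ d^{-n}+d^{-n-1}\circ h^{-n}$; the term $h^{-n}\circ d$ only reappears at the next step of the induction).
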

\begin{proof}
  The proof is an induction on $n\leqslant 1$, taking $h^1\colon
  S\to P^0$ equal to $0$. Let $n\leqslant 0$ and assume that there
  exist linear mappings, for all $j$ such that $n+1\leqslant j
  \leqslant 1$,
  \[
  h^j \colon P^j \to P^{j-1}\
  \]
  such that, for all $j$ satisfying $n+1\leqslant j \leqslant 0$,
  $p\in P^j$ and $s,t\in S$,
  \begin{equation}
    \label{eq:15}
    \begin{array}{rcl}
      \psi^j
      & = &
            h^{j+1} \circ d^j + d^{j-1} \circ h^j \\
      h^j((s \otimes t) \cdot p)
      & = &
            (s \otimes t) \cdot h^j(p) - (1 \otimes \partial)(s
            \otimes t) \cdot k^j(p)\,.
    \end{array}
  \end{equation}
  \[
  \xymatrix{
    P^n \ar[rr]^{d^n} \ar[d]_{\psi^n} && P^{n+1} \ar[d]_{\psi^{n+1}}
    \ar[lld]_{h^{n+1}} \ar[rr]^{d^{n+1}} && P^{n+2}
    \ar[rr]^{d^{n+2}} \ar[lld]_{h^{n+2}} && \cdots \\
    P^n \ar[rr]^{d^n} && P^{n+1} \ar[rr]^{d^{n+1}} && P^{n+2} \ar[rr]^{d^{n+2}} && \cdots \\
  }
  \]
  Let
  \[
 ((p_i,\varphi^i))_{i\in I}
  \]
  be a coordinate system of the projective left $S^e$-module
  $P^n$. That is, let $p_i\in P^n$ and
  $\varphi^i\in \Hom_{S^e}(P^n,S^e)$ for all $i\in I$, such that, for
  all $p\in P^n$,
  \[
  p = \sum_{i\in I} \varphi^i(p) \cdot p_i\,,
  \]
  where $\{i \in I \ |\ \varphi^i(p) \neq 0\}$ is finite.  Since
  $\psi \colon P^\bullet \to P^\bullet$ is a morphism of complexes, it
  follows from \eqref{eq:15} that, for all $i\in I$, there exists
  $p_i'\in P^{n-1}$ such that
  \begin{equation}
    \label{eq:16}
    \psi^n(p_i) =
    d^{n-1}(p_i') + h^{n+1} \circ d^n(p_i)\,.
  \end{equation}
  Denote by $h^n$ the linear mapping from $P^n$ to $P^{n-1}$ such
  that, for all $p \in P^n$,
  \[
  h^n(p)
  =
  \sum_{i\in I} \varphi^i(p)\cdot p_i' - (1
  \otimes \partial)(\varphi^i(p)) \cdot k^n(p_i)\,.
  \]
  Then, for all $p\in P^n$ and $s,t\in S$,
  \[
  \begin{array}{l}
    h^n((s\otimes t) \!\cdot\! p) \\ \\
    \!\!=\!\!
    \sum\limits_{i\in I}
    (s \otimes t) \!\cdot\! \varphi^i(p)\!\cdot\! p_i'
    \!-\! (s \otimes t) \!\cdot\! (1\otimes \partial)(\varphi^i(p))
    \!\cdot\! k^n(p_i)
    \!-\! (1 \otimes \partial)(s \otimes t) \!\cdot\! \varphi^i(p) \!\cdot\!
    k^n(p_i) \\ \\
    \!\!=\!\!
    (s \otimes t) \!\cdot\! h^n(p) \!-\! (1 \otimes \partial)(s \otimes
    t) \!\cdot\! k^n(\sum_{i\in I} \varphi^i(p) \!\cdot\! p_i) \\ \\
    \!\!=\!\! (s \otimes t) \!\cdot\! h^n(p)
    \!-\! (1 \otimes \partial) (s \otimes t) \!\cdot\! k^n(p)\,.    
  \end{array}
  \]
  Moreover,
  \[
  \psi^n = h^{n+1} \circ d^n + d^{n-1} \circ h^n\,.
  \]
  Indeed, for all $p\in P^n$,
  $p= \sum_{i\in I} \varphi^i(p) \cdot p_i$, and hence
  \[
  \begin{array}{l}
    d^{n-1} \circ h^n(p) + h^{n+1} \circ d^n(p) \\ \\
    \underset{\ \ \ \ }{\!\!=\!\!}
    \sum\limits_{i\in I}
              \varphi^i(p) \!\cdot\! d^{n-1}(p_i')
              - (1 \otimes \partial)(\varphi^i(p)) \!\cdot\! d^{n-1}\circ
              k^n(p_i)
          + h^{n+1}(\sum_{i\in I} \varphi^i(p) \!\cdot\! d^n(p_i)) \\ \\
     \underset{\eqref{eq:15}}{\!\!=\!\!}
          \sum\limits_{i\in I}
              \varphi^i(p) \!\cdot\! d^{n-1}(p_i')
              - (1\otimes \partial)(\varphi^i(p)) \!\cdot\! d^{n-1} \circ
              k^n(p_i) \\
    \null\hfill+ \varphi^i(p) \!\cdot\! h^{n+1} \circ d^n(p_i)
              - (1 \otimes \partial)(\varphi^i(p)) \!\cdot\! k^{n+1} \circ
              d^n(p_i) \\ \\
    \underset{~\eqref{eq:14}}{\!\!=\!\!}
            \sum\limits_{i\in I}
              \varphi^i(p) \!\cdot\! d^{n-1}(p_i')
              + \varphi^i(p) \!\cdot\! h^{n+1} \circ d^n(p_i)
              + \psi^n(\varphi^i(p) \!\cdot\! p_i)
              - \varphi^i(p) \!\cdot\! \psi^n(p_i) \\ \\
  \underset{~\eqref{eq:16}}{\!\!=\!\!}
    \sum\limits_{i\in I} \psi^n(\varphi^i(p) \!\cdot\! p_i)
    \!\!=\!\!
          \psi^n(p)\,.
  \end{array}
  \]
\end{proof}
\subsection{Derivations on the projective resolution}
\label{sec:deriv-homot}

Let $\partial \colon S \to S$ be an $R$-linear derivation. It defines
an $R$-linear derivation on $S^e$ denoted by $\partial^e$,
\[
\begin{array}{crcl}
  \partial^e \colon & S^e & \to & S^e \\
                    & s \otimes t & \mapsto & \partial(s) \otimes t + s
                                              \otimes \partial(t)\,.
\end{array}
\]
For every left $S^e$-module $M$, a \emph{derivation} of $M$ relative
to $\partial$ is an $R$-linear mapping
\[
\partial_M \colon M \to M
\]
such that, for all $m\in M$ and $s,t\in S$,
\[
\partial_M((s \otimes t) \cdot m) = \partial^e(s \otimes t)\cdot m + (s
\otimes t)\cdot \partial_M(m)\,.
\]
A derivation of $P^\bullet$ relative to $\partial$ is a morphism of
complexes of $R$-modules,
\[
\partial^\bullet \colon P^\bullet \to P^\bullet\,,
\]
such that $\partial^n\colon P^n \to P^n$ is a derivation relative to
$\partial$ for all $n$, and such that
$H^0(\partial^\bullet)= \partial$. Note that a morphism of complexes
of 
$R$-modules $\partial^\bullet \colon P^\bullet \to P^\bullet$ such
that $H^0(\partial^\bullet) = \partial$ is a derivation relative to
$\partial$ if and only if
\begin{equation}
  \label{eq:13}
  \left\{
    \begin{array}{rcl}
      \partial^\bullet \circ \lambda_s
      & = & \lambda_{\partial(s)} + \lambda_s \circ \partial^\bullet\,, \\
      \partial^\bullet \circ \rho_s
      & = & \rho_{\partial(s)} + \rho_s \circ \partial^\bullet\,. \\
    \end{array}\right.
\end{equation}

\begin{rem}
For all derivations $\partial_1^\bullet,\partial_2^\bullet \colon
P^\bullet \to P^\bullet$ relative to $\partial$, the difference
\[
\partial_1^\bullet - \partial_2^\bullet \colon P^\bullet \to
P^\bullet
\]
is a null-homotopic morphism of complexes of left $S^e$-modules.
\end{rem}
\begin{lem}
  \label{sec:deriv-proj-resol}
  There exists a mapping, which need not be linear,
  \begin{equation}
    \label{eq:51}
    \begin{array}{rcl}
      \mathrm{Der}_R(S)  &  \to &  \Hom_R(P^\bullet,P^\bullet) \\
      \partial & \mapsto & \partial^\bullet
    \end{array}
  \end{equation}
  such that:
  \begin{enumerate}
  \item For all $\partial \in \mathrm{Der}_R(S)$, the mapping
    $\partial^\bullet$ is a derivation relative to $\partial$.
  \item For all $\partial_1,\partial_2\in \mathrm{Der}_R(S)$ and
    $r\in R$, there
    exist morphisms of graded left $S^e$-modules
    \[
    \ell,\ell' \colon  P^\bullet  \to  P^\bullet[-1]
    \]
    such that
    \begin{equation}
      \label{eq:17}
      \left\{
        \begin{array}{rcl}
          [\partial_1,\partial_2]^\bullet
          - [\partial_1^\bullet,\partial_2^\bullet]
          & = &
                \ell \circ d + d \circ \ell \\
          (\partial_1+r\partial_2)^\bullet - (\partial_1^\bullet
          +r \partial_2^\bullet)
          & = &
                \ell' \circ d + d \circ \ell'\,.
        \end{array}
      \right.
    \end{equation}
  \item For all $s\in S$ and $\partial \in \mathrm{Der}_R(S)$, there
    exists a morphism of graded $R$-modules
    \[
    h\colon P^\bullet \to P^\bullet[-1]
    \]
    such that
    \begin{equation}
      \label{eq:18}
      (s\partial)^\bullet - \lambda_s\circ \partial^\bullet =
      h \circ d + d \circ h
    \end{equation}
    and, for all $p \in P^\bullet$ and $t_1,t_2\in S$,
    \begin{equation}
      \label{eq:19}
      h((t_1 \otimes t_2) \cdot p)
      =
      (t_1 \otimes t_2) \cdot h(p)
      - (t_1 \otimes \partial(t_2)) \cdot k_s(p)\,.
    \end{equation}
  \end{enumerate}
  Recall that $k_s\colon P^\bullet \to P^\bullet[-1]$ is a morphism of
  graded left $S^e$-modules such that $\lambda_s-\rho_s = k_s \circ d
  + d \circ k_s$ (see \eqref{eq:11} and \eqref{eq:12}).
\end{lem}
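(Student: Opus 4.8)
The plan is to build the map \eqref{eq:51} one derivation at a time, using the standard lifting/comparison technique for projective resolutions, and then to verify each of the three required properties. The key observation is that for a fixed $\partial \in \mathrm{Der}_R(S)$, requiring $\partial^\bullet$ to be a derivation relative to $\partial$ is equivalent to the commutation relations \eqref{eq:13}; these say precisely that $\partial^\bullet$ is a lift, along the resolution $P^\bullet \to S$, of the map $\partial\colon S \to S$, but with respect to the $S^e$-module structure twisted by $\partial^e$. So the first step is to set up this twisted lifting problem cleanly and invoke the comparison theorem; the degree-wise construction needs the fact that each $P^n$ is projective over $S^e$, exactly as in the proof of Lemma~\ref{sec:data-proj-resol-1}.

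**Construction of $\partial^\bullet$ and proof of (1).**
First I would fix, once and for all, a choice of $\partial^\bullet$ for each $\partial$, as follows. Regard $P^\bullet$ as a complex of $R$-modules. Since $P^\bullet \to S$ is exact and each $P^n$ is projective over $S^e$, one constructs $\partial^n\colon P^n \to P^n$ by descending induction on $n\leqslant 0$ (with $\partial^{1}=\partial$ on $S$): choosing a coordinate system $((p_i,\varphi^i))_{i\in I}$ of the projective $S^e$-module $P^n$ as in the proof of Lemma~\ref{sec:data-proj-resol-1}, one first picks $R$-linear lifts of the images $\partial^{n+1}\circ d^n(p_i)$ through $d^n$, then extends by the twisted Leibniz rule
\[
\partial^n\Bigl(\sum_{i\in I}(s_i\otimes t_i)\cdot p_i\Bigr)
=
\sum_{i\in I}\partial^e(s_i\otimes t_i)\cdot p_i + (s_i\otimes t_i)\cdot(\text{chosen lift of }\partial^{n+1}d^n(p_i)).
\]
One checks, exactly as in Lemma~\ref{sec:data-proj-resol-1}, that this is well defined, is a derivation relative to $\partial$ in each degree, and commutes with the differential; that $H^0(\partial^\bullet)=\partial$ holds by construction. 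This gives (1). Note that no linearity in $\partial$ is claimed or needed at this stage; the map \eqref{eq:51} is just a section of an obvious surjection.

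**Proof of (2).**
For (2) I would use the Remark preceding the lemma: any two derivations relative to the \emph{same} $\partial$ differ by a null-homotopic morphism of complexes of left $S^e$-modules. Now $[\partial_1^\bullet,\partial_2^\bullet]$ is a morphism of complexes of $R$-modules lifting $[\partial_1,\partial_2]$ on $S$, and a direct computation with the twisted Leibniz rules shows it is a derivation relative to $[\partial_1,\partial_2]$ (the cross terms involving $\partial_1^e\partial_2^e$ are symmetric and cancel in the commutator, leaving exactly $[\partial_1,\partial_2]^e$). Likewise $\partial_1^\bullet + r\,\partial_2^\bullet$ is a derivation relative to $\partial_1 + r\partial_2$. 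Hence both $[\partial_1,\partial_2]^\bullet - [\partial_1^\bullet,\partial_2^\bullet]$ and $(\partial_1+r\partial_2)^\bullet - (\partial_1^\bullet + r\partial_2^\bullet)$ are null-homotopic morphisms of complexes of left $S^e$-modules, which is exactly \eqref{eq:17}.

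**Proof of (3), the main obstacle.**
The delicate point is (3), because $s\partial$ and $\partial$ are different derivations, so $(s\partial)^\bullet$ and $\lambda_s\circ\partial^\bullet$ are \emph{not} both derivations relative to the same derivation, and the naive argument fails. The right move is to apply Lemma~\ref{sec:data-proj-resol-1} with its $\partial$ taken to be $\partial$, and with
\[
\psi := (s\partial)^\bullet - \lambda_s\circ\partial^\bullet,\qquad k:=k_s.
\]
First, $H^0(\psi) = s\partial - s\partial = 0$. Second, one must verify hypothesis \eqref{eq:14}, i.e.
\[
\psi\bigl((s_1\otimes s_2)\cdot p\bigr)
=
(s_1\otimes s_2)\cdot\psi(p)
-(s_1\otimes\partial(s_2))\cdot(k_s\circ d + d\circ k_s)(p).
\]
This is where the real computation lies: expanding $(s\partial)^\bullet$ via its Leibniz rule relative to $(s\partial)^e$, expanding $\lambda_s\circ\partial^\bullet$ via the Leibniz rule for $\partial^\bullet$ relative to $\partial^e$, and using $\partial^e$ versus $(s\partial)^e$ one finds the discrepancy is controlled by $(\lambda_s - \rho_s) = k_s\circ d + d\circ k_s$ (equation \eqref{eq:12}), twisted on the second tensor factor by $\partial$; this produces exactly the correction term with $(1\otimes\partial)(s_1\otimes s_2)\cdot(k_s d + d k_s)(p)$ demanded by \eqref{eq:14}. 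Once the hypotheses of Lemma~\ref{sec:data-proj-resol-1} are checked, its conclusion furnishes a graded $R$-linear $h\colon P^\bullet\to P^\bullet[-1]$ with $\psi = h\circ d + d\circ h$, which is \eqref{eq:18}, and with $h((t_1\otimes t_2)\cdot p) = (t_1\otimes t_2)\cdot h(p) - (1\otimes\partial)(t_1\otimes t_2)\cdot k_s(p)$. It remains only to observe that $(1\otimes\partial)(t_1\otimes t_2) = t_1\otimes\partial(t_2)$, so this is precisely \eqref{eq:19}. The bookkeeping in verifying \eqref{eq:14} — keeping straight which tensor slot each derivation acts on, and correctly identifying the homotopy term — is the part that requires care; everything else is a formal consequence of the comparison theorem and Lemma~\ref{sec:data-proj-resol-1}.
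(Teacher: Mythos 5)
Your proposal is correct and follows essentially the same route as the paper: a degree-wise inductive construction of $\partial^\bullet$ via a coordinate system of the projective $S^e$-modules $P^n$ for (1), the remark that two derivations relative to the same derivation differ by an $S^e$-linear null-homotopy for (2), and an application of Lemma~\ref{sec:data-proj-resol-1} to $\psi=(s\partial)^\bullet-\lambda_s\circ\partial^\bullet$ with $k=k_s$, using $\rho_s-\lambda_s=-(k_s\circ d+d\circ k_s)$ to verify \eqref{eq:14}, for (3). The only cosmetic caveat is that your displayed ``twisted Leibniz'' formula should be read through the coordinate functions $\varphi^i$ (as in the paper) to make well-definedness automatic.
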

\begin{proof}
  (1) Let $\partial \in \mathrm{Der}_R(S)$. For convenience, denote
  $\partial$ by $\partial^1 \colon S\to S$. The proof is an induction
  on $n\leqslant 1$. Let $n\leqslant 0$, and assume that a commutative
  diagram is given
  \[
  \xymatrix{
    P^n \ar[r]^{d^n} & P^{n+1} \ar[r] \ar[d]^{\partial^{n+1}} & \cdots \ar[r] & P^0
    \ar[r]^{d^0} \ar[d]^{\partial^0} & P^1 \ar[r] \ar[d]^{\partial^1}& 0 \\
    P^n \ar[r]^{d^n} & P^{n+1} \ar[r] & \cdots \ar[r] & P^0
    \ar[r]^{d^1} & P^1 \ar[r] & 0 \\
  }
  \]
  where $\partial^i \colon P^i \to P^i$ is a derivation relative to
  $\partial$ for all $i\in \{n+1,n+2,\cdots,0\}$. Let
  \[
  ((p_i,\varphi^i))_{i\in I}
  \]
  be a coordinate system of the projective left $S^e$-module $P^n$
  (see the proof in \ref{sec:data-proj-resol}).  Then, for all $i\in
  I$, there exists $o_i'\in P^n$ such that
  \[
  \partial^{n+1}\circ d^n(p_i) = d^n(p_i')\,.
  \]
  Denote by $\partial^n$
  the $R$-linear mapping from $P^n$ to $P^n$ such that, for all
  $p\in P^n$,
  \[
  \partial^n(p)
  =
  \sum_{i\in I} \partial(\varphi^i(p)) \cdot p_i
  + \varphi^i(p)\cdot p_i'\,.
  \]
  Then, for all $p\in P^n$,
  \[
  \begin{array}{rcl}
    d^n \circ \partial^n(p)
    & = &
          \sum_{i\in I}
          \partial(\varphi^i(p)) \cdot d^n(p_i)
          + \varphi^i(p) \cdot d^n(p_i') \\ \\
    & = &
          \sum_{i\in I}
          \partial(\varphi^i(p)) \cdot d^n(p_i)
          + \varphi^i(p) \cdot \partial^{n+1} \circ d^n(p_i) \\ \\
    & = &
          \partial^{n+1} \circ d^n(\sum_{i\in I} \varphi^i(p) \cdot
          p_i) \\ \\
    & = &
          \partial^{n+1} \circ d^n(p)\,.
  \end{array}
  \]
  Thus,
  \[
  d^n \circ \partial^n = \partial^{n+1} \circ d^n\,.
  \]
  Moreover, $\partial^n$ is a derivation of $P^n$ relative to
  $\partial$ because $\partial$ is a derivation of $S^e$ and
  $\varphi^i \in \Hom_{S^e}(P^n,S^e)$ for all $i\in I$.

  \medskip

  (2) Note that $[\partial_1,\partial_2]^\bullet$ and
  $[\partial_1^\bullet,\partial_2^\bullet]$ (or,
  $(\partial_1+r\partial_2)^\bullet$ and
  $\partial_1^\bullet+r\partial_2^\bullet$) are derivations of
  $P^\bullet$ relative to $[\partial_1,\partial_2]$ (or, to
  $\partial_1+r\partial_2$, respectively). The conclusion therefore
  follows from the remark preceding Lemma~\ref{sec:deriv-proj-resol}.

  \medskip

  (3)  Denote by $\psi$ the mapping
  $(s\partial)^\bullet - \lambda_s\circ \partial^\bullet$ given by
  \[
  \begin{array}{rcl}
    P^\bullet & \to & P^\bullet \\
    p & \mapsto & (s\partial)^\bullet(p) - (s \otimes 1)
                  \cdot \partial^\bullet (p)\,.
  \end{array}
  \]
  Then, for all $p\in P^\bullet$ and $t\in S$,
  \[
  \begin{array}{l}
    \psi((t \otimes 1) \!\cdot\! p) \\ \\
    \!=\!
          (s\partial)^\bullet((t \otimes 1) \!\cdot\! p)
          - (s \otimes 1) \!\cdot\! \partial^\bullet((t \otimes 1)
          \!\cdot\! p) \\ \\
    \!=\!
          (s\partial(t) \otimes 1) \!\cdot\! p + (t \otimes 1)
          \!\cdot\! (s\partial)^\bullet(p)
    - (s \otimes 1) \!\cdot\! (\partial(t) \otimes 1) \!\cdot\! p
    - (s \otimes 1) \!\cdot\! (t \otimes 1)
          \!\cdot\! \partial^\bullet(p) \\ \\
    \!=\!
          (t \otimes 1) \!\cdot\! \psi(p)
  \end{array}
  \]
  and
  \[
  \begin{array}{l}
    \psi((1 \otimes t) \!\cdot\! p) \\ \\
    \underset{\ \ \ \ \ \ }{=}
    \!\!\!\!(s\partial)^\bullet((1 \otimes t) \!\cdot\! p)
          - (s \otimes 1) \!\cdot\! \partial^\bullet((1 \otimes t)
          \!\cdot\! p) \\ \\
    \underset{\ \ \ \ \ \ }{=}
    \!\!\!\!(1 \otimes s\partial(t)) \!\cdot\! p + (1 \otimes t)
          \!\cdot\! (s \partial)^\bullet(p)
          - (s \otimes 1) \!\cdot\! (1 \otimes \partial(t)) \!\cdot\! p
          - (s \otimes 1) \!\cdot\! (1 \otimes t)
          \!\cdot\! \partial^\bullet(p) \\ \\
    \underset{\ \ \ \ \ \ }{=}
    \!\!\!\!(1 \otimes t) \!\cdot\! \psi(p) + (1 \otimes \partial(t)) \!\cdot\!
          (\rho_s - \lambda_s)(p) \\ \\
    \underset{~\eqref{eq:12}}{=}
    \!\!\!\!(1 \otimes t) \!\cdot\! \psi(p) - (1 \otimes \partial(t))\!\cdot\!
          (k_s\circ d + d \circ k_s)(p) \,.
  \end{array}
  \]
  Hence, Lemma~\ref{sec:data-proj-resol-1} may be applied, which yields (3).
\end{proof}

\begin{rem}
Using the remark preceding Lemma~\ref{sec:deriv-proj-resol}, it may be
checked that, although the mapping
$\mathrm{Der}_R(S) \to \Hom_R(P^\bullet,P^\bullet)$ of the lemma is
not unique, two such mappings induce the same mapping from
$\mathrm{Der}_R(S)$ to $H^0\Hom_R(P^\bullet,P^\bullet)$, which is
$R$-linear.
\end{rem}

When $S$ is projective in $\Mod(R)$, it is possible to be more
explicit on a possible mapping
$\partial\mapsto \partial^\bullet$. Indeed, the Hochschild complex
$B(S) = S^{\otimes \bullet+2}$ is a projective resolution of $S$.  For
all $\partial \in \mathrm{Der}_R(S)$, define the following mapping:
\[
\begin{array}{crcl}
  L_\partial \colon & B(S) & \to & B(S) \\
                    & (s_0 | \cdots | s_{n+1}) & \mapsto & \sum\limits_{i=0}^{n+1} (s_0 |
                                                           \cdots
                                                           |s_{i-1}| \partial(s_i)|\cdots
                                                           |s_{i+1} | \cdots | s_n)\,.
\end{array}
\]
This is a derivation of $B(S)$ relative to $\partial$. It is direct to
check that the mapping
\[
\begin{array}{rcl}
  \mathrm{Der}_R(S) & \to & \Hom_R(B(S),B(S)) \\
  \partial & \mapsto & L_\partial
\end{array}
\]
is a morphism of Lie algebras over $R$.
Now, consider homotopy equivalences of complexes of $S^e$-modules,
\[
\xymatrix{
  P^\bullet \ar@<+2pt>[r]^f & B(S)\,, \ar@<+2pt>[l]^g
}
\]
and, for all $\partial \in \mathrm{Der}_R(S)$, define
$\partial^\bullet$ as
\[
\partial^\bullet = g \circ L_\partial \circ f\,;
\]
this is a derivation relative to $\partial$ because so is
$L_\partial$ and because $f$ and $g$ are morphisms of
resolutions of $S$ in $\Mod(S^e)$. The following mapping satisfies
the conclusion of the preceding lemma, it is moreover $R$-linear.
\[
\begin{array}{rcl}
  \mathrm{Der}_R(S) & \to & \Hom_R(P^\bullet,P^\bullet) \\
  \partial & \mapsto & \partial^\bullet\,.
\end{array}
\]

\subsection{Lie derivatives}
\label{sec:lie-derivatives}

Consider a mapping $\partial \mapsto \partial^\bullet$ such as in
Lemma~\ref{sec:deriv-proj-resol}. Let $M$ be an $S$-bimodule and
$\partial\colon S\to S$ be an $R$-linear derivation. Let
$\partial_M \colon M \to M$ be a derivation relative to $\partial$.
Given $n\in \mathbb N$ and $\psi \in \Hom_{S^e}(P^{-n},M)$, denote by
$\mathcal L_\partial(\psi)$ the  mapping
\begin{equation}
  \label{eq:52}
  \mathcal L_\partial(\psi)  = \partial_M \circ \psi - \psi \circ \partial^{-n}\,.
\end{equation}
This is a morphism in $\Mod(S^e)$ because so is $\psi$ and because
$\partial_M$ and $\partial^{-n}$ are derivations relative to
$\partial$; moreover, it is a cocycle (or a coboundary) as soon as
$\psi$ is because $\partial^\bullet \colon P^\bullet \to P^\bullet$ is
a morphism of complexes. Denote by $\mathcal L_\partial$ the resulting
mapping in cohomology
\[
\mathcal L_\partial \colon \Ext^\bullet_{S^e}(S,M) \to \Ext^\bullet_{S^e}(S,M)
\]
such that for all $c\in \Ext^\bullet_{S^e}(S,M)$, say represented by a cocycle
$\psi$, then $\mathcal L_\partial(c)$ is represented by the cocycle
$\mathcal L_\partial(\psi)$. In the situations considered later in the
article, there is no ambiguity on $\partial_M$, whence its omission in
the notation. 

Following similar considerations denote also by $\mathcal L_\partial$
the mapping
\[
\mathcal L_\partial \colon \mathrm{Tor}_\bullet^{S^e}(S,M) \to \mathrm{Tor}_\bullet^{S^e}(S,M)
\]
such that for all $\omega \in \mathrm{Tor}_\bullet^{S^e}(S,M)$, say represented by a
cocycle $m\otimes p\in M \otimes_{S^e}P^\bullet$ with sum sign
omitted, $\mathcal L_\partial(\omega)$ is represented by the cocycle
\[
\mathcal L_\partial(m \otimes p) :=m \otimes \partial^\bullet(p) + 
\partial_M(m) \otimes p\,.
\]

\medskip

When $S$ is projective in $\Mod(R)$, these operations may be written
explicitly in terms of the Hochschild resolution. When $\psi$ is a
Hochschild cocycle lying in $\Hom_R(S^{\otimes n},M)$, the mapping
$\mathcal L_\partial(\psi)$ is given by
\begin{equation}
  \label{eq:21}
  (s_1|\cdots |s_n) \mapsto \partial_M(f(s_1|\cdots|s_n)) -
  \sum\limits_{i=1}^n f(s_1|\cdots|\partial(s_i)|\cdots|s_n)\,.
\end{equation}
Likewise, the operation in Hochschild homology is induced by the
following mapping at the level of Hochschild chains,
\[
\begin{array}{rcl}
  M \otimes S^{\otimes n} & \to & M \otimes S^{\otimes n} \\
  (m|s_1|\cdots|s_n) & \mapsto & (\partial_M(m)|s_1|\cdots |s_n)
                                 + \sum\limits_{i=1}^n (m|s_1|\cdots
                                 | \partial(s_i)| \cdots |s_n)\,.
\end{array}
\]

\medskip

The operator $\mathcal L_\partial$ is of course called the \emph{Lie
  derivative} of $\partial$. When $M=S$ and $S$ is projective in
$\Mod(R)$, this is nothing else but the classical Lie derivative
defined in \cite[Section 9]{MR0154906}. In view of the remark
following Lemma~\ref{sec:deriv-proj-resol}, these constructions
depend only on $\partial$ and $\partial_M$ and not on the choices of
$P^\bullet$ and the mapping $\partial \mapsto \partial^\bullet$.

In the sequel these constructions are considered mainly in the
following cases:
\begin{itemize}
\item $M=S$ and $\partial_M=\partial$.
\item $M=S^e$ and $\partial_M=\partial^e$.
\item $M=\Ext^n_{S^e}(S,S^e)$ ($n\in \mathbb N$) and $\partial_M=\mathcal
  L_\partial$, which makes sense according to the result below.
\end{itemize}
In the sequel the following construction is also used. Consider
$S$-bimodules $M,N$. Let $m,n\in \mathbb N$. Let
$\partial \in \mathrm{Der}_R(S)$ and let $\partial_M \colon M \to M$ and
$\partial_N \colon N\to N$ be $R$-linear derivations relative to
$\partial$. Then, for all $f\in \Hom_R(\Ext^m_{S^e}(S,M),\mathrm{Tor}_n^{S^e}(S,N))$, define
$\mathcal L_\partial(f)$ as
\[
\mathcal L_\partial \circ f - f \circ \mathcal L_\partial\,.
\]

\medskip

Recall that for all $M\in \Mod(S^e)$, the spaces $\Ext^\bullet_{S^e}(S,M)$
and $\mathrm{Tor}_\bullet^{S^e}(S,M)$ are left $S$-modules by means
of $\lambda_s\colon M \to M$ for all $s\in S$; the corresponding
multiplication by $s$ on these (co)homology spaces is denoted by
$\lambda_s$.
\begin{lem}
  \label{sec:lie-derivatives-1}
  Let $M\in \Mod(S^e)$, $n\in \mathbb N$ and $s\in S$. Let
  $\partial,\partial'\colon S\to S$ be $R$-linear derivations. Let
  $\partial_M,\partial'_M\colon M \to M$ be $R$-linear derivations
  relative to $\partial$ and $\partial'$, respectively. Then, the
  following hold in $\Ext^\bullet_{S^e}(S,M)$:
  \begin{enumerate}
  \item $\mathcal L_\partial \circ \lambda_s = \lambda_{\partial(s)} +
    \lambda_s \circ \mathcal L_\partial$.
  \item $\mathcal L_{[\partial,\partial']} = [\mathcal L_\partial,
    \mathcal L_{\partial'}]$.
  \item let $m\in \mathbb N$, let $N$ be another $S$-bimodule and let
    $\partial_N \colon N \to N$ be a derivation relative to
    $\partial$. Consider the contraction mapping
    \[
    \begin{array}{rcl}
      \mathrm{Tor}_m^{S^e}(S,M) & \to & \Hom_R( \Ext^n_{S^e}(S,N) , \mathrm{Tor}_{m-n}^{S^e}(S,M \otimes_S N)) \\
      \omega & \mapsto & (c \mapsto \iota_c(\omega))\,.
    \end{array}
    \]
    If $m=n$, then it is $\mathcal L_\partial$-equivariant. When $S$
    is projective in $\Mod(R)$, it is $\mathcal L_\partial$
    equivariant for all $m,n\in \mathbb N$;
  \item If $M$ is symmetric as an $S$-bimodule, $\mathcal
    L_{s\partial} = \lambda_s\circ \mathcal L_\partial$.
  \item When $M=S^e$ (and $\partial_M=\partial^e$), the following
  equality holds in $\Ext^\bullet_{S^e}(S,M)$:
  \[
  \mathcal L_{s\partial} = \lambda_s \circ \mathcal L_\partial -
  \lambda_{\partial(s)}\,.
  \]
  \end{enumerate}
\end{lem}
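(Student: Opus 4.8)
The plan is to prove each of the five identities at the level of (co)chains, representing $\Ext^\bullet_{S^e}(S,M)$ by $\Hom_{S^e}(P^\bullet,M)$ and $\mathrm{Tor}^{S^e}_\bullet(S,M)$ by the complex $M\otimes_{S^e}P^\bullet$, and using the cochain formula \eqref{eq:52} for $\mathcal L_\partial$ together with the derivations $\partial^\bullet$ of $P^\bullet$ supplied by Lemma~\ref{sec:deriv-proj-resol}. The recurring device is: if $\xi\colon P^\bullet\to P^\bullet[-1]$ is a morphism of graded \emph{left $S^e$-modules} and $\psi$ is a cocycle, then $\psi\circ\xi$ is an $S^e$-linear cochain and, since $\psi\circ d=0$, the operation $\psi\mapsto\psi\circ(\xi\circ d+d\circ\xi)$ induces the zero map on $\Ext^\bullet_{S^e}(S,M)$. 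I shall also use that an $S^e$-linear cochain $\psi$ satisfies $\lambda_s^M\circ\psi=\psi\circ\lambda_s$ on $P^\bullet$, and the identity $\lambda_s-\rho_s=k_s\circ d+d\circ k_s$ of \eqref{eq:12}, whose immediate consequence is that $\lambda_s=\rho_s$ already on $\Ext^\bullet_{S^e}(S,M)$.

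For (1), expand $\mathcal L_\partial(\lambda_s\cdot\psi)=\mathcal L_\partial(\lambda_s^M\circ\psi)$ from \eqref{eq:52}; since $\partial_M$ is a derivation relative to $\partial$ one has $\partial_M\circ\lambda_s^M=\lambda_{\partial(s)}^M+\lambda_s^M\circ\partial_M$, and the claimed equality $\mathcal L_\partial\circ\lambda_s=\lambda_{\partial(s)}+\lambda_s\circ\mathcal L_\partial$ then holds already on cochains, no homotopy being needed. For (2), take $[\partial_M,\partial'_M]$ as the derivation of $M$ relative to $[\partial,\partial']$ (a routine check shows it is one). Expanding $[\mathcal L_\partial,\mathcal L_{\partial'}](\psi)$ from \eqref{eq:52}, the two mixed terms $\partial_M\circ\psi\circ(\partial')^{-n}$ and $\partial'_M\circ\psi\circ\partial^{-n}$ cancel, leaving $[\partial_M,\partial'_M]\circ\psi-\psi\circ[\partial^{-n},(\partial')^{-n}]$; subtracting $\mathcal L_{[\partial,\partial']}(\psi)$ leaves $\psi\circ\bigl([\partial^\bullet,(\partial')^\bullet]-[\partial,\partial']^\bullet\bigr)$, which by Lemma~\ref{sec:deriv-proj-resol}(2) equals $\psi\circ(\ell\circ d+d\circ\ell)$ with $\ell$ a morphism of graded left $S^e$-modules, hence a coboundary.

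For (3) I treat the two cases of the statement. When $m=n$, represent $\omega\in\mathrm{Tor}_n^{S^e}(S,M)$ by $x\otimes p\in M\otimes_{S^e}P^{-n}$ and $c\in\Ext^n_{S^e}(S,N)$ by a cocycle $\varphi\in\Hom_{S^e}(P^{-n},N)$, so that $\iota_c(\omega)$ is represented by $(x\otimes\varphi(p))\otimes p^0$ with $\epsilon(p^0)=1_S$. Using that the derivation of $M\otimes_S N$ relative to $\partial$ is $\partial_M\otimes\mathrm{id}+\mathrm{id}\otimes\partial_N$, a direct expansion shows that $\mathcal L_\partial(\iota_c(\omega))$ and $\iota_{\mathcal L_\partial(c)}(\omega)+\iota_c(\mathcal L_\partial(\omega))$ have representatives agreeing up to the single term $(x\otimes\varphi(p))\otimes\partial^0(p^0)$; but $\epsilon(\partial^0(p^0))=\partial(\epsilon(p^0))=\partial(1_S)=0$, so $\partial^0(p^0)\in\mathrm{im}(d^{-1})$ and this term is a boundary, hence vanishes in $\mathrm{Tor}_0^{S^e}$. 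This is exactly the asserted $\mathcal L_\partial$-equivariance for $m=n$. When $S$ is projective over $R$ and $m,n$ are arbitrary, the same scheme runs on Hochschild (co)chains with the explicit contraction $((x|s_1|\cdots|s_n),\psi)\mapsto(x\otimes\psi(s_1|\cdots|s_m)|s_{m+1}|\cdots|s_n)$ and the explicit Lie derivatives \eqref{eq:21}: the terms in which $\partial$ hits $s_1,\dots,s_m$ regroup into $\iota_{\mathcal L_\partial(c)}(\omega)$, those in which it hits $s_{m+1},\dots,s_n$ or the coefficient regroup into $\iota_c(\mathcal L_\partial(\omega))$, again accounting for $\partial_{M\otimes_S N}=\partial_M\otimes\mathrm{id}+\mathrm{id}\otimes\partial_N$.

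For (4) and (5), apply Lemma~\ref{sec:deriv-proj-resol}(3): $(s\partial)^\bullet-\lambda_s\circ\partial^\bullet=h\circ d+d\circ h$ for an $R$-linear graded $h$ obeying \eqref{eq:19}. Take the derivation of $M$ relative to $s\partial$ to be $\lambda_s\circ\partial_M$ when $M$ is symmetric and $(s\partial)^e$ when $M=S^e$; in the first case $\lambda_s\circ\partial_M$ is a derivation relative to $s\partial$ precisely because $M$ is symmetric, the coefficient contributions cancel, and one gets on cocycles $\mathcal L_{s\partial}(\psi)-\lambda_s\cdot\mathcal L_\partial(\psi)=-\psi\circ(h\circ d+d\circ h)|_{P^{-n}}=-\psi\circ h^{-n+1}\circ d^{-n}$, whereas in the second case one keeps an extra genuine cochain-level term, $\mathcal L_{s\partial}(\psi)-\lambda_s\cdot\mathcal L_\partial(\psi)=\bigl((s\partial)^e-\lambda_s\circ\partial^e\bigr)\circ\psi-\psi\circ h^{-n+1}\circ d^{-n}$. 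Since $h$ is not $S^e$-linear, $\psi\circ h^{-n+1}$ is not a cochain; one replaces it by the genuine $S^e$-linear cochain $g\colon q\mapsto\sum_j\chi^j(q)\cdot\psi(h^{-n+1}(q_j))$ attached to a coordinate system $((q_j,\chi^j))_j$ of the projective $S^e$-module $P^{-n+1}$, and then, using \eqref{eq:19}, checks that $g\circ d^{-n}$ and $\psi\circ h^{-n+1}\circ d^{-n}$ differ by an explicit term involving $k_s$; composing with $d$ and using $\psi\circ d=0$ and \eqref{eq:12}, this term collapses — it vanishes when $M$ is symmetric, which gives (4), and for $M=S^e$ it combines with $\bigl((s\partial)^e-\lambda_s\circ\partial^e\bigr)\circ\psi$ to produce, modulo a coboundary, exactly $-\lambda_{\partial(s)}(\psi)$, which gives (5). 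The main obstacle is this final step: keeping track of the non-$S^e$-linear homotopy $h$ and extracting from it, in the non-symmetric case $M=S^e$, precisely the correction $-\lambda_{\partial(s)}$.
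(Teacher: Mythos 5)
Your treatments of (1), (2) and (3) coincide with the paper's: (1) is a pure cochain identity, (2) is handled with the $S^e$-linear homotopy $\ell$ of Lemma~\ref{sec:deriv-proj-resol}(2) so that $\psi\circ\ell\circ d$ is a coboundary, and (3) is exactly the paper's argument (the stray term $x\otimes\psi(p)\otimes\partial^0(p^0)$ killed because $\partial^0(p^0)\in\mathrm{im}\,d$, and the Hochschild-chain computation in the projective case). Your (5) is also correct and is the paper's proof in light disguise: unwinding your coordinate-system cochain $g$ via \eqref{eq:19} shows that $g-\psi\circ h$ agrees, modulo an $S^e$-linear cochain, with $(1\otimes\partial)\circ\psi\circ k_s$, which is precisely the correction the paper adds to $\psi\circ h$ to make it $S^e$-linear; then \eqref{eq:12}, $\psi\circ d=0$ and $[(1\otimes\partial),\lambda_s-\rho_s]=-\rho_{\partial(s)}$, together with $\rho_{\partial(s)}\circ\psi\sim\lambda_{\partial(s)}\circ\psi$, give exactly $-\lambda_{\partial(s)}$.

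The genuine gap is in (4), at the very step you flag: you assert that the leftover $k_s$-term ``vanishes when $M$ is symmetric'' but give no argument, and in fact it does not vanish. The defect of $S^e$-linearity of $\psi\circ h$ is, by \eqref{eq:19}, the map $q\mapsto(t_1\otimes\partial(t_2))\cdot\psi(k_s(q))$; in case (5) this is absorbed because $M=S^e$ carries the operator $1\otimes\partial$, which satisfies $D((t_1\otimes t_2)m)=(t_1\otimes t_2)D(m)+(t_1\otimes\partial(t_2))\cdot m$, and this absorption is what produces the $-\lambda_{\partial(s)}$ of (5). For a general symmetric $M$ no operator with that property exists (taking $t_2=1$ forces $S^e$-linearity while taking $t_1=1$ forces a derivation rule relative to $\partial$, which are incompatible), so your manipulation leaves a term that no bookkeeping will remove. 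Indeed the identity of (4), with $\mathcal L_{s\partial}$ formed as in the paper from $s\partial_M$ and an honest lift of $s\partial$ (for instance $L_{s\partial}$ on the bar resolution, legitimate by Lemma~\ref{sec:deriv-proj-resol} and the remark on independence of the lift), already fails in degree one: for $S=R[x]$, $M=S$, $\partial_M=\partial=d/dx$ and $s=x$, formula \eqref{eq:21} gives on $\Ext^1_{S^e}(S,S)\cong\mathrm{Der}_R(S)$ that $\mathcal L_{x\partial}(\partial)=[x\partial,\partial]=-\partial$, whereas $\lambda_x\circ\mathcal L_\partial(\partial)=x[\partial,\partial]=0$, and these classes are distinct since there are no nonzero inner derivations; so in positive degrees the symmetric case acquires a correction of the same kind as (5). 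Note that the paper's own proof of (4) does not go through the homotopy $h$ at all: it substitutes $\lambda_s\circ\partial^\bullet$ for $(s\partial)^\bullet$ on the strength of the claim that this is a derivation relative to $s\partial$, a claim which itself misses the $\rho_t$-compatibility of \eqref{eq:13} by $(\lambda_s-\rho_s)\circ\rho_{\partial(t)}$ --- exactly the discrepancy your homotopy is tracking. So your honest accounting exposes a real obstruction rather than a removable technicality; as written, your proof of (4) cannot be completed, and the statement should either be restricted (e.g.\ to degree zero, where your computation does close) or corrected by the extra $-\lambda_{\partial(s)}$-type term.
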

\begin{proof}
  (1) The equality is checked on cochains. Let $\psi \in
  \Hom_{S^e}(P^{-n},M)$. Then,
  \[
  \begin{array}{rcl}
    \mathcal L_\partial \circ \lambda_s(\psi)
    & = &
          \partial_M \circ \lambda_s \circ \psi
          - \lambda_s \circ \psi \circ \partial^\bullet \\
    & = &
          (\lambda_{\partial(s)} + \lambda_s \circ \partial_M) \circ
          \psi
          - \lambda_s \circ \psi \circ \partial^\bullet \\
    & = &
          (\lambda_{\partial(s)} + \lambda_s \circ \mathcal
          L_\partial) (\psi)\,.
  \end{array}
  \]

  \medskip
  
  (2) Note that $\mathcal L_{[\partial,\partial']}$ is defined with
  respect to $[\partial_M,\partial_M']$, which is a derivation of $M$
  relative to $[\partial,\partial']$. Following Lemma~\ref{sec:deriv-proj-resol},
  there exists a morphism of graded $S^e$-modules
  \[
  \ell \colon P^\bullet \to P^\bullet[-1]
  \]
  such that
  \[
  [\partial,\partial']^\bullet - [\partial^\bullet,\partial'^\bullet]
  = \ell \circ d+ d\circ \ell\,.
  \]
  Let $\psi  \in \Hom_{S^e}(P^{-n},M)$. If this is a cocycle, then
  \[
  \begin{array}{rcl}
    \mathcal L_{[\partial,\partial']}(\psi)
    & = &
          [\partial_M,\partial_M'] \circ \psi
          - \psi \circ ([\partial^\bullet,\partial'^\bullet] + \ell
          \circ d + d \circ \ell) \\
    & = &
          [\mathcal L_\partial, \mathcal L_{\partial'}](\psi)
          - \psi \circ \ell \circ d
          - \underset{=0}{\underbrace{\psi \circ d}} \circ \ell\,,
  \end{array}
  \]
  which is cohomologous to $[\mathcal L_\partial \mathcal
  L_{\partial'}](\psi)$. This proves (2).
  
  \medskip

  (3) Note that the mapping
  \[
  \begin{array}{rrcl}
    \partial_{M\otimes_SN} \colon
    & M \otimes_SN & \to & M \otimes_S N \\
    & x \otimes y & \mapsto & \partial_M(x) \otimes y + x
                              \otimes \partial_N(y)\,,
  \end{array}
  \]
  is a well-defined derivation relative to $\partial$, which defines
  $\mathcal L_\partial$ on
  \[
  \mathrm{Tor}_{m-n}^{S^e}(S,M\otimes_S N)\,.
  \]

  Assume first that $m=n$. Let $p^0$ be any element of the preimage of
  $1_S$ under the augmentation mapping $P^0\to S$. Let
  $x \otimes p \in M \otimes_{S^e}P^{-m}$ and
  $\psi \in \Hom_{S^e}(P^{-m},N)$, and use the notation
  \[
  \iota_\psi(x \otimes p) := (x\otimes \psi(p)) \otimes
  p^0\,.
  \]
  Recall that the contraction mapping is
  induced by the mapping
  \[
  \begin{array}{rcl}
    M \otimes_{S^e}P^{-m}
    & \to &
            \Hom_R(\Hom_{S^e}(P^{-m},N) , (M\otimes_S
            N)\otimes_{S^e}P^0) \\
    x \otimes p & \mapsto & \iota_?(x \otimes p)
  \end{array}
  \]
  Denote
  $\mathcal L_\partial(\iota_\psi(x \otimes p))
  - \iota_{\mathcal L_\partial(\psi)}(x \otimes p)$ by $\delta$. Then,
  \[
  \begin{array}{rcl}
    \delta
    & = &
          \mathcal L_\partial( (x \otimes \psi(p)) \otimes p^0)
          - (x \otimes \mathcal L_\partial(\psi)(p)) \otimes p^0 \\ \\
    & = &
          \partial_M(x) \otimes \psi(p) \otimes p^0 
          + x \otimes \partial_N(\psi(p)) \otimes p^0
          + x \otimes \psi(p) \otimes \partial^0(p^0) \\
    &   &
    \hfill      - x \otimes \partial_N(\psi(p)) \otimes p^0
          + x \otimes
          \psi(\partial^{-m}(p)) \otimes p^0 \\ \\
    & = &
          \iota_\psi(\mathcal L_\partial(x \otimes p)) + x \otimes
          \psi(p) \otimes \partial^0(p^0)\,.
  \end{array}
  \]
  Note that $\partial^0(p_0)$ lies in the image of
  $d\colon P^{-1}\to P^0$ because the image of $p^0$ under $P^0 \to S$
  is $1$ and $H^0(\partial^\bullet) = \partial$. These considerations
  therefore prove (3) when $m=n$.

  \medskip

  Now assume that $S$ is projective in $\Mod(R)$. Then, the
  equivariance may be checked at the level of Hochschild
  (co)chains. Let $o=(x|s_1|\cdots|s_m) \in S^{\otimes m}$ and
  $\psi \in \Hom_R(S^{\otimes n },N)$. Then,
  \[
  \begin{array}{l}
    \mathcal L_\partial(\iota_\psi (o)) - \iota_{\mathcal L_\partial
    (\psi)}(o) \\ \\
    \!\!=\!\!
          \mathcal L_\partial(
          x \otimes \psi(s_1|\cdots | s_n)|s_{n+1}|\cdots |s_m)
          - (x \otimes \mathcal L_\partial(\psi)(s_1|\cdots | s_n)
          | s_{n+1}|\cdots |s_m) \\ \\
    \!\!=\!\!
          (\partial_M(x) \otimes \psi(s_1|\cdots|s_n)
          |s_{n+1}|\cdots |s_m) 
          + (x \otimes \partial_N(\psi(s_1|\cdots|s_n))
          |s_{n+1}|\cdots |s_m) \\ \\
    \null \hfill + \sum\limits_{j=n+1}^m
          (x \otimes \psi(s_1|\cdots | s_n)|s_{n+1}|\cdots
    |\partial(s_j)|\cdots |s_m)
    \hfill \null
    \\ \\
    \null \hfill
    - (x \otimes \partial_N(\psi(s_1|\cdots |s_n))|s_{n+1}|\cdots
    |s_m)
    \hfill \null \\ \\
    \null \hfill + \sum\limits_{j=1}^n
          (x \otimes \psi(s_1|\cdots|\partial(s_j)|\cdots |s_n)|s_{n+1}|\cdots
          |s_m) \\ \\
    \!\!=\!\!
          \iota_\psi(\mathcal L_\partial(o))\,,
  \end{array}
  \]
  which proves (3) for all $m,n\in \mathbb N$ when $S$ is projective
  in $\Mod(R)$.

  \medskip

  (4) Note that $\mathcal L_{s\partial}$ is defined with respect to
  the derivation $s\partial_M$ ($=\lambda_s\circ \partial_M$). Assume that $M$
  is symmetric as an $S$-bimodule. Therefore, the mapping
  \[
  \lambda_s \circ \partial^\bullet \colon P^\bullet \to P^\bullet
  \]
  is a derivation relative to $s\partial$. Let $\psi \in
  \Hom_{S^e}(P^\bullet,M)$ be a cocycle with cohomology class denoted
  by $c$. Since $\psi\circ \lambda_s
  = \lambda_s \circ \psi$,
  \[
  \mathcal L_{s\partial}(\psi)
  =
  (\lambda_s\circ \partial_M) \circ \psi - \psi \circ (\lambda_s
  \circ \partial^\bullet) = \lambda_s \circ \mathcal L_\partial(\psi)\,.
  \]
  Taking cohomology classes yields that $\mathcal L_{s\partial}(c) =
  \lambda_s\circ \mathcal L_\partial(c)$.

  \medskip

  (5) Recall that, here, $\partial_M$ is taken equal to
  \[
  \begin{array}{crcl}
    (s\partial)^e \colon & S^e & \to & S^e \\
                         & s_1 \otimes s_2 & \mapsto &
                                                       s\partial(s_1)
                                                       \otimes s_2
                                                       + s_1 \otimes s\partial(s_2)\,.
  \end{array}
  \]
  Let $\psi \in \Hom_{S^e}(P^{-n},M)$ be a cocycle with cohomology
  class denoted by $c$.
  Let $h$ be as in part (3) of Lemma~\ref{sec:deriv-proj-resol}.
  Then,
  \[
  \begin{array}{rcl}
    \mathcal L_{s\partial}(\psi)
    & = &
          (s\partial)^e\circ \psi - \psi \circ (s\partial)^\bullet \\
    & = &
          (s\partial \otimes 1 + 1 \otimes s\partial) \circ \psi -
          \psi \circ (s \partial)^\bullet \\ 
    & = &
          \lambda_s \circ (\partial \otimes 1) \circ \psi
          + \rho_s \circ ( 1 \otimes \partial) \circ \psi
          - \psi \circ (s \partial)^\bullet\,.
  \end{array}
  \]
  Using \eqref{eq:18}, the equality becomes
  \[
    \mathcal L_{s\partial}(\psi)
    \!\!=\!\!
          \lambda_s \circ (\partial \otimes 1) \circ \psi
          + \rho_s \circ (1 \otimes \partial) \circ \psi \\
          - \lambda_s \circ \psi \circ \partial^\bullet - \psi
          \circ h \circ d - \underset{=0}{\underbrace{\psi \circ d}}
          \circ h\,.
  \]
  Using $[\partial,\rho_s] = \rho_{\partial(s)}$, it then becomes
  \[
  \begin{array}{rl}
    \mathcal L_{s\partial}(\psi)
    & \!=\! 
          \lambda_s \circ (\partial \otimes 1) \circ \psi + (1
          \otimes \partial) \circ \rho_s \circ \psi
          - \rho_{\partial(s)} \circ \psi 
          - \lambda_s \circ \psi \circ \partial^\bullet - \psi
          \circ h \circ d \\
    & \!=\! 
          \lambda_s \circ (\partial \otimes 1) \circ \psi
          - \rho_{\partial(s)} \circ \psi
          + (1 \otimes \partial) \circ \psi \circ (\rho_s -
          \lambda_s) \\
    & \hfill + (1 \otimes \partial) \circ \psi \circ \lambda_s
          - \lambda_s \circ \psi \circ \partial^\bullet
          - \psi \circ h \circ d\,.
  \end{array}
  \]
  Using \eqref{eq:12}, this becomes
  \[
  \begin{array}{rl}
    \mathcal L_{s\partial}(\psi)
    & \!\!= \!\!
          \lambda_s \!\circ\! (\partial \!\otimes\! 1) \!\circ\! \psi
          - \rho_{\partial(s)} \!\circ\! \psi
          - (1 \!\otimes\! \partial) \!\circ\!
          \underset{=0}{\underbrace{\psi \!\circ\! d}} \!\circ\! k_s
    \\ \\
    & \hfill
          - (1 \!\otimes\! \partial) \!\circ\! \psi \!\circ\! k_s \!\circ\! d
          + \underset{\scriptstyle
          =(1 \otimes \partial) \circ \lambda_s \circ \psi =
          \lambda_s \circ (1 \otimes \partial) \circ
          \psi}{\underbrace{(1 \!\otimes\! \partial) \!\circ\! \psi
          \!\circ\! 
          \lambda_s}} 
          - \lambda_s \!\circ\! \psi \!\circ\! \partial^\bullet
          - \psi \!\circ\! h \!\circ\! d \\ \\
    & \!\!= \!\!
          \lambda_s \!\circ\! (\partial \!\otimes\! 1 + 1
          \!\otimes\! \partial) \!\circ\! \psi
          - \rho_{\partial(s)} \!\circ\! \psi
          - \lambda_s \!\circ\! \psi \!\circ\! \partial^\bullet
          -(\psi \!\circ\! h + (1 \!\otimes\! \partial) \!\circ\! \psi
          \!\circ\! k_s) \!\circ\! d \\ \\
    & \!\!= \!\!
          \lambda_s \!\circ\! (\mathcal L_\partial(\psi)) - \rho_{\partial(s)} \!\circ\!
          \psi
          - (\psi \!\circ\! h + (1 \!\otimes\! \partial) \!\circ\! \psi
          \!\circ\! k_s) \!\circ\! d\,.
  \end{array}
  \]
  Now, consider the following $R$-linear mapping denoted by $f$:
  \[
  \psi \circ h + (1 \otimes \partial) \circ \psi \circ k_s
  \colon P^{-n+1} \to S^e\,.
  \]
  This is a morphism of $S$-bimodules. Indeed,
  \begin{itemize}
  \item it is a morphism of left $S$-modules because so are $\psi$, $1
    \otimes \partial$, $k_s$ and $h$ (see \eqref{eq:19});
  \item since $\psi$ and $k_s$ are morphisms of $S$-bimodules,
    then, for all $t\in S$,
    \[
    \begin{array}{rcl}
      f \circ \rho_t
      & = &
            \psi \circ h \circ \rho_t
            + (1 \otimes \partial) \circ \rho_t \circ \psi
            \circ k_s \\ \\
      & \underset{~\eqref{eq:19}}{=}
          &
            \psi \circ (\rho_t \circ h - \rho_{\partial(t)} \circ k_s)
            + (1 \otimes \partial) \circ \rho_t \circ \psi
            \circ k_s \\ \\
      & = &
            \rho_t \circ \psi \circ h - \rho_{\partial(t)} \circ \psi
            \circ k_s + (1 \otimes \partial) \circ \rho_t \circ
            \psi \circ k_s \\ \\
      & = &
            \rho_t \circ \psi \circ h + \rho_t \circ (1
            \otimes \partial) \circ \psi \circ k_s \\ \\
      & = &
            \rho_t \circ f\,.
    \end{array}
    \]
  \end{itemize}
  Therefore, $\mathcal L_{s\partial}(\psi)$ and $\lambda_s \circ
  \mathcal L_\partial(\psi) - \rho_{\partial(s)}
  \circ \psi$ are cohomologous. Since so are $\lambda_{\partial(s)}
  \circ \psi$ and $\rho_{\partial(s)} \circ \psi$ it follows that
  \[
  \mathcal L_{s\partial}(c) = \lambda_s \circ\mathcal L_\partial(c)
  - \lambda_{\partial(s)}(c)\,.
  \]
\end{proof}
\subsection{The action of $L$ on $\Ext^\bullet_{S^e}(S,S^e)$}
\label{sec:acti-l-extb}

According to Lemma~\ref{sec:lie-derivatives-1}, the mapping 
\begin{equation}
  \label{eq:20}
  \begin{array}{ccc}
    L \times \Ext^n_{S^e}(S,S^e) & \to & \Ext^n_{S^e}(S,S^e) \\
    (\alpha, e) & \mapsto & \alpha \cdot e := \mathcal L_{\partial_\alpha}(e)
  \end{array}
\end{equation}
endows $\Ext^\bullet_{S^e}(S,S^e)$ with a compatible left
$S\rtimes L$-module structure in the sense of \eqref{eq:4}, that is, a
left $S\rtimes L$-module structure such that, for all
$e\in \Ext^\bullet_{S^e}(S,S^e)$, $\alpha\in L$ and $s\in S$,
\begin{equation}
  \label{eq:53}
  (s\alpha) \cdot e = s \cdot (\alpha \cdot e) - \alpha(s)
  \cdot e\,.
\end{equation}
This left $S\rtimes L$-module structure on $\Ext^\bullet_{S^e}(S,S^e)$
does not define a left $U$-module structure in general. However,
Lemma~\ref{sec:left-right-u} yields that
$\Ext^\bullet_{S^e}(S,S^e)^\vee$ is a right $U$-module by defining
$\theta\cdot \alpha$, for all
$\theta \in \Ext^\bullet_{S^e}(S,S^e)^\vee$ and $\alpha \in L$, as
\[
\begin{array}{crcl}
  \theta \cdot \alpha \colon &\Ext^n_{S^e}(S,S^e) & \to & S \\
                             &e & \mapsto & - \alpha(\theta(e)) +
                                            \theta(\alpha \cdot e)
                                            \,.
\end{array}
\]

\subsection{Particular case of Van den Bergh and Calabi-Yau duality}
\label{sec:part-case-calabi}

Recall that, whenever $\mathrm{Tor}^{S^e}_n(S,S)\simeq S$ as $S$-(bi)modules, a
\emph{volume form} is a free generator $\omega_S$ of $\mathrm{Tor}^{S^e}_n(S,S)$;
and the associated \emph{divergence}
\[
\mathrm{div} \colon \mathrm{Der}_R(S) \to S
\]
is defined such that, for all $\partial \in \mathrm{Der}_R(S)$,
\begin{equation}
  \label{eq:55}
  \mathcal L_\partial(\omega_S) = \mathrm{div}(\partial)\omega_S\,.
\end{equation}

When $S$ is Calabi-Yau in dimension $n$, any free generator
$e_S$ of the left $S$-module $\Ext^n_{S^e}(S,S^e)$ defines an isomorphism of
$S$-bimodules
\[
\begin{array}{rrcl}
  \theta \colon & S & \to & \Ext^n_{S^e}(S,S^e) \\
                & s & \mapsto & s e_S\,.
\end{array}
\]
In such a situation, the fundamental class
$c_S \in \mathrm{Tor}_n^{S^e}(S,\Ext^n_{S^e}(S,S^e))$ (see
Section~\ref{sec:fund-class-contr}) is a free generator of the left
$S$-module $\mathrm{Tor}_n^{S^e}(S,\Ext^n_{S^e}(S,S^e))$; and hence
the preimage $\omega_S$ of $c_S$ under the bijective mapping
\[
\theta_* \colon \mathrm{Tor}_n^{S^e}(S,S) \to \mathrm{Tor}_n^{S^e}(S,\Ext^n_{S^e}(S,S^e))\,.
\]
is a volume form for $S$, thus defining a divergence operator.
\begin{prop}
  \label{sec:particular-case-van}
  The following properties hold.
  \begin{enumerate}
  \item Assume the following:
    \begin{itemize}
    \item $R$ is Noetherian and $S$ is finitely generated as an
      $R$-algebra.
    \item $S$ is projective in $\Mod(R)$.
    \item $S$ has Van den Bergh duality with dimension $n$.
    \end{itemize}
    Then there is an isomorphism of $S$-modules compatible with Lie
    derivatives
    \[
    \Ext^n_{S^e}(S,S^e) \simeq \Lambda_S^n\mathrm{Der}_R(S)\,.
    \]
  \item  Assume that $S$ is Calabi-Yau in dimension $n$. Let $e_S$ be a
    free generator of the left $S$-module $\Ext^n_{S^e}(S,S^e)$. Let $\mathrm{div}$
    be the resulting divergence operator. Then, for all $\partial \in
    \mathrm{Der}_R(S)$,
    \begin{equation}
      \label{eq:56}
      \mathcal L_{\partial}(e_S) = -\mathrm{div}(\partial) e_S\,.
    \end{equation}
  \end{enumerate}
\end{prop}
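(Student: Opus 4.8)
The plan is to deduce both statements from one preliminary observation: the fundamental class is annihilated by every Lie derivative, i.e. $\mathcal{L}_\partial(c_S)=0$ in $\mathrm{Tor}^{S^e}_n(S,\Ext^n_{S^e}(S,S^e))$ for all $\partial\in\mathrm{Der}_R(S)$, where the coefficient module $\Ext^n_{S^e}(S,S^e)$ carries the derivation $\mathcal{L}_\partial$ of \ref{sec:lie-derivatives}. In both cases of the proposition $S$ has Van den Bergh duality, hence $S\in\mathrm{per}(S^e)$ and $n=\mathrm{pd}_{S^e}(S)$, so that (see \ref{sec:fund-class-contr}) contraction with $c_S$ realises the bijection
\[
\mathrm{Tor}^{S^e}_n(S,\Ext^n_{S^e}(S,S^e))\xrightarrow{\ \sim\ }\Hom_{S^e}\bigl(\Ext^n_{S^e}(S,S^e),\mathrm{Tor}^{S^e}_0(S,\Ext^n_{S^e}(S,S^e)\otimes_S S^e)\bigr),
\]
under which $c_S$ corresponds, via the canonical identification of the right-hand target with $\Ext^n_{S^e}(S,S^e)$, to the identity map. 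By the part of Lemma~\ref{sec:lie-derivatives-1}(3) in which the $\mathrm{Tor}$-degree equals the $\Ext$-degree — which needs no hypothesis on $S$ as an $R$-module — this bijection is $\mathcal{L}_\partial$-equivariant; since the identity map is annihilated by $\mathcal{L}_\partial$ (as $\mathcal{L}_\partial\circ\mathrm{Id}-\mathrm{Id}\circ\mathcal{L}_\partial=0$, once one checks that the two induced actions of $\mathcal{L}_\partial$ on $\Ext^n_{S^e}(S,S^e)$ match under the identification) and the bijection is injective, $\mathcal{L}_\partial(c_S)=0$ follows.

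For (1), Proposition~\ref{sec:relat-regul} already provides that $\Omega_{S/R}$ is projective of constant rank $n$ and that $\Ext^n_{S^e}(S,S^e)$ is an invertible $S$-module; the point is to upgrade the isomorphism of its proof to one respecting Lie derivatives. Applying Van den Bergh duality to the bimodule $\Ext^n_{S^e}(S,S^e)^{-1}$ in cohomological degree $0$, realised by contraction with $c_S$, gives
\[
\Ext^n_{S^e}(S,S^e)^{-1}=\Ext^0_{S^e}(S,\Ext^n_{S^e}(S,S^e)^{-1})\xrightarrow{\ \sim\ }\mathrm{Tor}^{S^e}_n(S,S)\,;
\]
since $S$ is projective over $R$, Lemma~\ref{sec:lie-derivatives-1}(3) applies in all bidegrees and, combined with $\mathcal{L}_\partial(c_S)=0$, shows this isomorphism is $\mathcal{L}_\partial$-equivariant (the Lie derivative on $\Ext^n_{S^e}(S,S^e)^{-1}$ being the one dual to $\mathcal{L}_\partial$, so that the evaluation pairing $\Ext^n_{S^e}(S,S^e)\otimes_S\Ext^n_{S^e}(S,S^e)^{-1}\to S$ is equivariant). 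Next, the Hochschild--Kostant--Rosenberg isomorphism $\mathrm{Tor}^{S^e}_n(S,S)\simeq\Lambda^n_S\Omega_{S/R}$ used in the proof of Proposition~\ref{sec:relat-regul} is $\mathcal{L}_\partial$-equivariant: on Hochschild chains the antisymmetrisation map carries the chain-level operator of \ref{sec:lie-derivatives} to the classical Lie derivative on $\Lambda^n_S\Omega_{S/R}$, by Cartan's formula. Dualising the resulting equivariant isomorphism $\Ext^n_{S^e}(S,S^e)^{-1}\simeq\Lambda^n_S\Omega_{S/R}$ over $S$ — legitimate since both are invertible $S$-modules, and compatible with Lie derivatives because the Lie derivative on the $S$-dual of an invertible module is determined by the one on the module — yields $\Ext^n_{S^e}(S,S^e)\simeq(\Lambda^n_S\Omega_{S/R})^\vee\simeq\Lambda^n_S\mathrm{Der}_R(S)$, compatibly with Lie derivatives.

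For (2), let $e_S$ be the chosen free generator of the rank-one left $S$-module $\Ext^n_{S^e}(S,S^e)$, let $\theta\colon S\to\Ext^n_{S^e}(S,S^e)$, $s\mapsto se_S$, be the induced isomorphism of $S$-bimodules, and let $\omega_S=\theta_*^{-1}(c_S)$ be the associated volume form, so that $\mathcal{L}_\partial(\omega_S)=\mathrm{div}(\partial)\,\omega_S$ by \eqref{eq:55}. Writing $\mathcal{L}_\partial(e_S)=\mu\,e_S$ with $\mu\in S$, the goal is $\mu=-\mathrm{div}(\partial)$. I would pick a cycle $z=\sum_k s_k\otimes p_k$ in $S\otimes_{S^e}P^{-n}$ representing $\omega_S$, so that $\theta_*(z)=\sum_k(s_ke_S)\otimes p_k$ represents $c_S$; then, computing the chain-level Lie derivative on $\mathrm{Tor}^{S^e}_n(S,\Ext^n_{S^e}(S,S^e))$ (with $\partial_M=\mathcal{L}_\partial$, which is a derivation relative to $\partial$ by Lemma~\ref{sec:lie-derivatives-1}(1)) and expanding $\mathcal{L}_\partial(s_ke_S)=\partial(s_k)e_S+s_k\mu\,e_S$ via Lemma~\ref{sec:lie-derivatives-1}(1), one obtains
\[
\mathcal{L}_\partial(c_S)=\theta_*\bigl(\mathcal{L}_\partial(\omega_S)+\mu\,\omega_S\bigr)=\bigl(\mathrm{div}(\partial)+\mu\bigr)c_S\,.
\]
Since $\mathcal{L}_\partial(c_S)=0$ and $c_S$ is a free generator of $\mathrm{Tor}^{S^e}_n(S,\Ext^n_{S^e}(S,S^e))$, this forces $\mathrm{div}(\partial)+\mu=0$.

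The genuinely delicate steps are the two that go beyond bookkeeping: first, the vanishing $\mathcal{L}_\partial(c_S)=0$, whose proof rests on correctly matching the $\mathcal{L}_\partial$-action on $\mathrm{Tor}^{S^e}_0(S,\Ext^n_{S^e}(S,S^e)\otimes_S S^e)$ with the one on $\Ext^n_{S^e}(S,S^e)$ through the canonical identification; and second, in (1), the $\mathcal{L}_\partial$-equivariance of the Hochschild--Kostant--Rosenberg isomorphism, which is a routine but not wholly trivial verification on the Hochschild complex (and may alternatively be invoked from the literature). Everything else is a matter of assembling the compatibilities already collected in Lemma~\ref{sec:lie-derivatives-1}.
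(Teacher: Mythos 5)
Your argument is correct and follows essentially the same route as the paper: both first establish $\mathcal L_\partial(c_S)=0$ from the definition of the fundamental class together with the $\mathcal L_\partial$-equivariance of the contraction (the $m=n$ case of Lemma~\ref{sec:lie-derivatives-1}(3)), then prove (1) by combining the duality isomorphism $\Ext^0_{S^e}(S,D^{-1})\simeq\mathrm{Tor}_n^{S^e}(S,S)$ given by $\iota_?(c_S)$ and the evaluation pairing with the Lie-derivative compatibility of $\mathrm{Tor}_n^{S^e}(S,S)\simeq\Lambda^n_S\Omega_{S/R}$ (which the paper cites from Rinehart rather than re-deriving), and prove (2) by the same chain-level computation of $\mathcal L_\partial\circ\theta_*$ forcing $\lambda=-\mathrm{div}(\partial)$. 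The only differences are presentational (you sketch the equivariance of the Hochschild--Kostant--Rosenberg map and make the final dualisation $D^{-1}\simeq\Lambda^n_S\Omega_{S/R}\Rightarrow D\simeq\Lambda^n_S\mathrm{Der}_R(S)$ explicit), so there is nothing to correct.
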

\begin{proof}
  In both cases, $S$ lies in $\mathrm{per}(S^e)$. Denote the fundamental
  class of $S$ by $c_S$.  In view of part (3) of
  Lemma~\ref{sec:lie-derivatives-1}, the definition of $c_S$ gives
  that
  \begin{equation}
    \label{eq:60}
    \mathcal L_\partial(c_S) = 0\,.
  \end{equation}
  
  (1) Denote $\Ext^n_{S^e}(S,S^e)$ by $D$. In view of
  Proposition~\ref{sec:relat-regul}, \cite[Theorem 3.1]{MR0142598} applies
  and yields an isomorphism of $S$-modules
  \begin{equation}
    \label{eq:61}
    \mathrm{Tor}_n^{S^e}(S,S) \simeq \Lambda^n_S\Omega_{S/R}\,.
  \end{equation}
  Following \cite[Section 9]{MR0154906}, this isomorphism is
  compatible with Lie derivatives. Identify $D^{-1}$ with
  $\Hom_S(D,S)$ and define $\partial_{D^{-1}}$ as follows, for all
  $\partial \in \mathrm{Der}_R(S)$,
  \[
  \begin{array}{rrcl}
    \partial_{D^{-1}} \colon & \Hom_S(D,S) & \to & \Hom_S(D,S) \\
                             & f & \mapsto & \partial \circ f - f \circ \mathcal L_\partial\,.
  \end{array}
  \]
  The evaluation isomorphism
  \begin{equation}
    \label{eq:62}
    \mathrm{ev} \colon D\otimes_SD^{-1} \xrightarrow{\sim} S
  \end{equation}
  is compatible with Lie derivatives in the following sense, where $\partial \in \mathrm{Der}_R(S)$,
  \begin{equation}
    \label{eq:63}
    \partial \circ \mathrm{ev} = \mathrm{ev} \circ (\mathcal L_\partial
    \otimes \mathrm{Id} + \mathrm{Id} \otimes \partial_{D^{-1}})\,.
  \end{equation}
  Besides, the duality isomorphism
  \begin{equation}
    \label{eq:64}
    \iota_?(c_S) \colon \Ext^0_{S^e}(S,D^{-1}) \to \mathrm{Tor}_n^{S^e}(S,D\otimes_S D^{-1})
  \end{equation}
  is compatible with the action of Lie derivatives because of
  (\ref{eq:60}) (see part (3) of
  Lemma~\ref{sec:lie-derivatives-1}). Combining (\ref{eq:61}),
  (\ref{eq:62}), (\ref{eq:63}) and (\ref{eq:64}) yields an isomorphism
  that is compatible with Lie derivatives
  \[
  D^{-1} \simeq \Lambda^n_S\Omega_{S/R}\,.
  \]
  This proves (1).

  \medskip

  (2) Keep the notation $c_S$, $\omega_S$, $\theta$, $\theta_*$ for
  the objects defined from $e_S$ before the statement of the
  proposition.  Let $\partial \in \mathrm{Der}_R(S)$. There exists
  $\lambda\in S$ such that
  \[
  \mathcal L_\partial(e_S) = \lambda e_S\,.
  \]
  Now, for all $s \otimes p \in S\otimes_{S^e}P^{-n}$, 
  \[
  \begin{array}{rcl}
    \mathcal L_\partial(\theta_*(s \otimes p))
    & = &
          \mathcal L_\partial(se_S\otimes p) \\
    & = &
          \partial(s)e_S\otimes p 
          + s \mathcal L_\partial(e_S) \otimes p 
          + s e_S \otimes \partial^\bullet(p)  \\
    & = &
          \theta_*(\mathcal L_\partial(s \otimes p)) 
          +\lambda\theta_*(s \otimes p)\,.
  \end{array}
  \]
  Therefore,
  \[
  0
  = 
  \mathcal L_\partial(c_S)
  = 
  \mathcal L_\partial(\theta_*(\omega_S))
  = 
  \theta_*(\underset{=\mathrm{div}(\partial) \omega_S}{\underbrace{\mathcal L_\partial(\omega_S)}}) + \lambda \theta_*(\omega_S)
  =
  (\lambda+\mathrm{div}(\partial)) c_S\,.
  \]
  Since $c_S$ is regular, $\lambda = -\mathrm{div}(\partial)$.
\end{proof}

\section{Proof of the main theorems}
\label{sec:proof-main-theorem}

The main results of this article are proved in this section. For this
purpose, a description of $\Ext^\bullet_{U^e}(U,U^e)$ is made in
\ref{sec:proof-theorem}; the underlying $S$-module is expressed in
terms of $\Ext^\bullet_{S^e}(S,S^e)$ and $\Ext^\bullet_U(S,U)$; and
the $U$-bimodule structure is described using the functor
$F\colon \Mod(U) \to \Mod(U^e)$ and the action of $L$ on
$\Ext^\bullet_{S^e}(S,S^e)$ introduced in
Section~\ref{sec:left-u-module}. This description is applied in
\ref{sec:proof-theor-refs} in order to prove
Theorem~\ref{sec:introduction}. And Theorem~\ref{sec:introduction-1}
and Corollary~\ref{sec:main-results-1} are proved in
\ref{sec:proof-theor-refs-1} and \ref{sec:case-poiss-algebr} by
specialising to the situations where $\Ext^\mathrm{top}_{S^e}(S,S^e)$
and $\Ext^\mathrm{top}_U(S,U)$ are free, and where $(S,L)$ arises from
a Poisson bracket on $S$, respectively.

Throughout the section, $\Ext^\bullet_{S^e}(S,S^e)$ is endowed with
its compatible left $S\rtimes L$-module structure introduced in
\ref{sec:acti-l-extb}.

\subsection{The inverse dualising bimodule of $U$}
\label{sec:proof-theorem}

This subsection proves the following result.
\begin{prop}
  \label{sec:inverse-dual-bimod}
  Let $R$ be a commutative ring and $d\in \mathbb N$. Let $(S,L)$ be
  a Lie-Rinehart algebra over $R$. 
    Assume the following:
  \begin{enumerate}[(a)]
  \item $S$ is flat as an $R$-module.
  \item for all $n\in \mathbb N$, the $S$-module $\Ext^n_{S^e}(S,S^e)$
    is projective.
  \item $S\in \mathrm{per}(S^e)$.
  \item $L$ is finitely generated and projective with constant rank
    equal to $d$ in $\Mod(S)$.
  \end{enumerate}
  Then,
  $\Lambda_S^dL^\vee \otimes_S \Ext_{S^e}^\bullet (S,S^e)$ is a graded left
  $U$-module such that, for all $\alpha \in L$,
  $c\in \Ext_{S^e}^\bullet(S,S^e)$ and $\varphi \in \Lambda_S^dL^\vee$,
  \[
  \alpha \cdot (\varphi \otimes c) = - \varphi \cdot \alpha \otimes c +
  \varphi \otimes \alpha \cdot c\,.
  \]
  Moreover, $U$ is homologically smooth. Finally, there is an isomorphism of
  graded right $U^e$-modules 
  \[
  \Ext_{U^e}^\bullet(U,U^e) \simeq F( \Lambda^d_SL^\vee \otimes_S
  \Ext_{S^e}^{\bullet-d}(S,S^e)) \,.
  \]
\end{prop}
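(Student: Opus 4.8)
\emph{Overview.} The statement has three parts: the left $U$-module structure on $\Lambda^d_SL^\vee\otimes_S\Ext^\bullet_{S^e}(S,S^e)$, the homological smoothness of $U$, and the description of $\Ext^\bullet_{U^e}(U,U^e)$. The first is immediate: $\Lambda^d_SL^\vee$ is a right $U$-module via \eqref{eq:67}, the module $\Ext^\bullet_{S^e}(S,S^e)$ is a compatible left $S\rtimes L$-module by \ref{sec:acti-l-extb}, and the left $U$-module structure of \eqref{eq:50} furnished by Lemma~\ref{sec:left-right-u}(3) is precisely the one displayed. For the remaining two parts the device is the functor $F\colon\Mod(U)\to\Mod(U^e)$ of \ref{sec:functor-fcolon-modu}: one checks directly from \ref{sec:functor-fcolon-modu} that $F(S)\simeq U$ in $\Mod(U^e)$ (with $S$ carrying the $U$-module structure of \ref{sec:basic-constr-u}), and that $F$ is exact, because $U$ is projective as a right $S$-module (Lemma~\ref{sec:cont-notat-used}(2)).

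\emph{Homological smoothness.} The plan is to exhibit a finite resolution of $U$ in $\Mod(U^e)$ by perfect modules. The $U$-bimodule analogue of the Chevalley--Eilenberg resolution, $U\otimes_S\Lambda^\bullet_SL\otimes_SU\to U$, is a resolution of $U$ (standard when $L$ is projective over $S$) of length $d$ by hypothesis~(d), and its term in degree $-k$ is canonically the $U^e$-module $U^e\otimes_{S^e}\Lambda^k_SL$ induced from the symmetric $S^e$-module $\Lambda^k_SL$. Since $\Lambda^k_SL$ is a direct summand of a finite free $S$-module it lies in $\mathrm{per}(S^e)$ by hypothesis~(c); and $U^e$ is projective as a right $S^e$-module, by Lemma~\ref{sec:cont-notat-used}(2) together with hypothesis~(a), so the induction functor $U^e\otimes_{S^e}-$ is exact and preserves finitely generated projectives, hence carries $\mathrm{per}(S^e)$ into $\mathrm{per}(U^e)$. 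Each term of the resolution therefore lies in $\mathrm{per}(U^e)$, and so does $U$.

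\emph{The inverse dualising bimodule.} Exactness of $F$ makes its right adjoint $G$ (see \ref{sec:adjunction-between-f}) preserve injectives, so applying $G$ to an injective resolution of $U^e$ in $\Mod(U^e)$ and using the adjunction yields an isomorphism $R\Hom_{U^e}(U,U^e)\simeq R\Hom_U(S,RG(U^e))$ in the derived category of $R$-modules, compatible with the residual right $U^e$-module actions. Here $R^qG(U^e)=\Ext^q_{S^e}(S,U^e)$; since $S\in\mathrm{per}(S^e)$ and $U^e$ is projective as a left $S^e$-module, a base-change argument identifies this with $\Ext^q_{S^e}(S,S^e)\otimes_{S^e}U^e$ carrying the $U$-$U^e$-bimodule structure of Lemma~\ref{sec:1}(1), which by Lemma~\ref{sec:1}(2) equals $F(U\otimes_S\Ext^q_{S^e}(S,S^e))$; in particular, as a left $U$-module it is an induced module $U\otimes_SY_q$, where $Y_q$ is the left $S$-module underlying $U\otimes_S\Ext^q_{S^e}(S,S^e)$, which is projective over $S$ by Lemma~\ref{sec:cont-notat-used}(2) and hypothesis~(b). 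Then Huebschmann's duality \cite[Theorem~2.10]{MR1696093} (equivalently, that the $U$-dual of the length-$d$ Rinehart resolution of $S$ is, up to the twist by $\Lambda^d_SL^\vee$, a resolution of $\Lambda^d_SL^\vee$) shows that $\Ext^p_U(S,U\otimes_SY_q)$ vanishes for $p\neq d$ while for $p=d$
\[
\Ext^d_U(S,U\otimes_SY_q)\simeq\Lambda^d_SL^\vee\otimes_SY_q=\Lambda^d_SL^\vee\otimes_U(\Ext^q_{S^e}(S,S^e)\otimes_{S^e}U^e)\simeq F(\Lambda^d_SL^\vee\otimes_S\Ext^q_{S^e}(S,S^e)),
\]
the last step again by Lemma~\ref{sec:1}(2). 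Feeding this into the hyperext spectral sequence $E_2^{p,q}=\Ext^p_U(S,R^qG(U^e))\Rightarrow\Ext^{p+q}_{U^e}(U,U^e)$, which therefore has a single nonzero column $p=d$ and degenerates, and reindexing, one obtains the asserted isomorphism of graded right $U^e$-modules.

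\emph{Main obstacle.} The homological input (exactness of $F$, the Rinehart and Chevalley--Eilenberg resolutions, stability of $\mathrm{per}$ under induction, Huebschmann's duality, the flatness and perfectness hypotheses) is comparatively soft. The real work lies in the bookkeeping: one must track, through $F$, $G$, and the bimodules of Lemma~\ref{sec:1}, the interplay of the left $U$-module structures with the residual right $U^e$-module structures, and in particular check that the base-change isomorphism for $\Ext^\bullet_{S^e}(S,-)$ and the various identifications with $F(-)$ (which rely on the bialgebroid structure of $U$) are compatible with all of them, so that the final isomorphism is genuinely one of graded right $U^e$-modules and not merely of $S$-modules.
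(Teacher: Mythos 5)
Your scheme for computing $\Ext^\bullet_{U^e}(U,U^e)$ is essentially the paper's (adjunction $(F,G)$, identification of the derived functor of $G$ at $U^e$, a spectral sequence collapsing onto the column $p=d$, and Lemma~\ref{sec:1}(2) to reassemble the $E_2$-term as $F(\Lambda^d_SL^\vee\otimes_S\Ext^q_{S^e}(S,S^e))$), but it has a genuine gap at the decisive step. You claim that ``a base-change argument'' identifies $R^qG(U^e)$ with $\Ext^q_{S^e}(S,S^e)\otimes_{S^e}U^e$ \emph{carrying the $U$-$U^e$-bimodule structure of Lemma~\ref{sec:1}(1)}. Base change along a resolution of $S$ by finitely generated projective $S^e$-modules does give the isomorphism of right $U^e$-modules, but it says nothing about the residual left $U$-action: on $R^qG(U^e)$ this action comes from $\alpha\mapsto(\alpha\otimes1-1\otimes\alpha)$ acting on $\Hom_{S^e}(S,I^\bullet)$, and transporting it to $\Hom_{S^e}(P^\bullet,S^e)\otimes_{S^e}U^e$ requires lifting each $\partial_\alpha$ to a derivation $\partial_\alpha^\bullet$ of $P^\bullet$ (Lemma~\ref{sec:deriv-proj-resol}) and verifying that every quasi-isomorphism in the chain \eqref{eq:36}, \eqref{eq:37}, \eqref{eq:38}, \eqref{eq:27} is equivariant for the induced actions. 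This is precisely how the Lie-derivative action of \ref{sec:acti-l-extb} enters the final answer (and, downstream, the Nakayama automorphism of Theorem~\ref{sec:introduction-1}); it is the content of Lemma~\ref{sec:cohomology-gibullet} together with most of Section~\ref{sec:left-u-module}. You correctly flag this compatibility as ``the real work,'' but you provide no mechanism for it, and without it the $E_2$-identification that feeds Huebschmann's duality — and hence the asserted bimodule structure of $\Ext^\bullet_{U^e}(U,U^e)$ — is unestablished rather than merely tedious.

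On homological smoothness your route is genuinely different from the paper's: the paper proves that $F(U)$ is compact in $\mathcal D(U^e)$ via the adjunction $(F,G)$ and the compactness of $S$ in $\mathcal D(S^e)$, then uses $S\in\mathrm{per}(U)$ and exactness of $F$; you instead invoke a $U$-bimodule Chevalley--Eilenberg resolution $U\otimes_S\Lambda^\bullet_SL\otimes_SU\to U$ whose terms are $U^e\otimes_{S^e}\Lambda^k_SL$. Your reduction from there is sound: $\Lambda^k_SL\in\mathrm{per}(S^e)$ by (c) and (d), and induction along $S^e\to U^e$ preserves perfect complexes since $U^e$ is projective over $S^e$ by Lemma~\ref{sec:cont-notat-used}(2). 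But the existence and exactness of that resolution — well-definedness of the Chevalley--Eilenberg differential over the noncommutative pair of $S$-actions, and acyclicity, which is usually obtained from Rinehart's PBW theorem by a filtration argument reducing to the relative Koszul complex of $\mathrm{Sym}_SL$ — is not proved in the paper and should not be labelled ``standard'' without either a proof or a precise reference; as stated it is an unsupported assertion carrying the whole weight of that part of the argument.
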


For this subsection, assume (a), (b), (c) and (d) are true,
and consider
\begin{itemize}
\item a bounded resolution $Q^\bullet \to S$ in $\Mod(U)$ by finitely
  generated and projective modules (see \cite[Lemma 4.1]{MR0154906}),
\item a bounded resolution $\pi \colon P^\bullet \to S$ in
  $\Mod(S^e)$ by finitely generated and projective modules,
\item an injective resolution $j \colon U^e \to I^\bullet$ in
  $\Mod(U^e\otimes (U^e)^\mathrm{op})$.
\end{itemize}
Since $S$ is flat over $R$ and $L$ is projective in $\Mod(S)$,
part (2) of Lemma~\ref{sec:cont-notat-used} gives that $U^e$ is flat
over $R$. Therefore, the extension-of-scalars functor
\[
- \otimes U^e \colon \Mod(U^e) \to \Mod(U^e \otimes (U^e)^\mathrm{op})
\]
is exact. Hence, the
restriction-of-scalars-functor transforms injective $U^e$-bimodules
into injective left $U^e$-modules. Thus, $I^\bullet$ is an
injective resolution of $U^e$ in $\Mod(U^e)$.
Therefore, there is an isomorphism of graded right $U^e$-modules
\begin{equation}
  \label{eq:22}
  \Ext^\bullet_{U^e}(U,U^e) \simeq H^\bullet \Hom_{U^e}(U,I^*)\,.
\end{equation}
The right-hand side is a right $U^e$-module by means of $I^*$.

The proof of the above proposition is divided into separate lemmas.
\begin{lem}
  \label{sec:u-homol-smooth}
  $U$ is homologically smooth.
\end{lem}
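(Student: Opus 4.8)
The plan is to exhibit $U$ as an object of $\mathrm{per}(U^e)$ by transporting perfectness along two exact functors, starting from the two hypotheses that are really needed here, namely $S\in\mathrm{per}(S^e)$ (assumption~(c)) and, via Rinehart's finite projective resolution of $S$ as a left $U$-module, $S\in\mathrm{per}(U)$ (a consequence of assumption~(d)). The two functors involved are $F=U\otimes_S-\colon\Mod(U)\to\Mod(U^e)$, already introduced in \ref{sec:functor-fcolon-modu}, and the functor
\[
T\colon \Mod(S^e)\to\Mod(U^e),\qquad M\mapsto U\otimes_SM\otimes_SU,
\]
where $U\otimes_SM\otimes_SU$ is given the evident $U$-bimodule structure, by left multiplication on the left-hand copy of $U$ and right multiplication on the right-hand copy. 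By part~(2) of Lemma~\ref{sec:cont-notat-used}, which applies because $L$ is projective over $S$ (assumption~(d)), the algebra $U$ is projective, hence flat, both as a left and as a right $S$-module; consequently both $F$ and $T$ are exact.

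The first step is to compute $T$ on the regular bimodule $S^e$. Collapsing the two middle tensor products gives an isomorphism of $U$-bimodules
\[
T(S^e)\;=\;U\otimes_S(S\otimes_RS)\otimes_SU\;\xrightarrow{\ \sim\ }\;U\otimes_RU=U^e,\qquad u\otimes(a\otimes b)\otimes v\mapsto ua\otimes bv,
\]
and one checks that this identification matches the $U$-bimodule structure on the left-hand side with the standard one on $U^e$. Therefore $T$ sends a finitely generated projective $S^e$-module (a retract of some $(S^e)^m$) to a finitely generated projective $U^e$-module (the corresponding retract of $(U^e)^m$); being exact, $T$ thus maps $\mathrm{per}(S^e)$ into $\mathrm{per}(U^e)$. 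Since $S\in\mathrm{per}(S^e)$ by assumption~(c), this yields
\[
U\otimes_SU\;=\;T(S)\;\in\;\mathrm{per}(U^e).
\]

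The second step uses the bounded resolution $Q^\bullet\to S$ in $\Mod(U)$ by finitely generated projective modules, whose existence rests on assumption~(d) (see \cite[Lemma~4.1]{MR0154906}). Applying the exact functor $F$ produces a quasi-isomorphism $F(Q^\bullet)\to F(S)=U$ in $\Mod(U^e)$. Each term $F(Q^i)=U\otimes_SQ^i$ is a retract of a finite direct sum of copies of $F(U)=U\otimes_SU$, which lies in $\mathrm{per}(U^e)$ by the first step; since $\mathrm{per}(U^e)$ is a thick subcategory of the derived category of $\Mod(U^e)$ — closed under shifts, cones, finite direct sums and retracts — the bounded complex $F(Q^\bullet)$ lies in $\mathrm{per}(U^e)$, and hence so does $U$. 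This establishes that $U$ is homologically smooth. (Alternatively, one could assemble an explicit bounded resolution of $U$ over $U^e$ from $Q^\bullet$ and the resolution $P^\bullet$ of $S$ over $S^e$, but the functorial argument above is shorter.)

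The computations are essentially bookkeeping, and the only point requiring genuine care is keeping track of the various $U$-bimodule (equivalently, left $U^e$-module) structures through the canonical identifications: one must check that the isomorphism $T(S^e)\cong U^e$ is one of $U$-bimodules and not merely of $S$-bimodules, and that $F$ and $T$ are indeed exact — which is precisely where the two-sided $S$-projectivity of $U$ furnished by Lemma~\ref{sec:cont-notat-used}(2) enters. Everything else follows from the thickness of $\mathrm{per}(-)$.
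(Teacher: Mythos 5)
Your reduction in the second step is sound and is in fact the same reduction the paper makes: since $F$ is exact (Lemma~\ref{sec:cont-notat-used}(2)), $F(S)\simeq U$, and $S\in\mathrm{per}(U)$ via Rinehart's resolution, it suffices to know $F(U)\in\mathrm{per}(U^e)$, because each $F(Q^i)$ is a retract of finite sums of $F(U)$ and $\mathrm{per}(U^e)$ is thick. The gap is in how you supply that input. Your first step proves that $T(S)=U\otimes_SU$ \emph{with the outer structure} $(u\otimes v)\cdot w=u\otimes vw$ lies in $\mathrm{per}(U^e)$; but $F(U)$ is $U\otimes_SU$ with a \emph{different} right $U$-action, namely $(u\otimes v)\cdot s=us\otimes v$ and $(u\otimes v)\cdot\alpha=u\alpha\otimes v-u\otimes\alpha v$ for $\alpha\in L$ (see Section~\ref{sec:functor-fcolon-modu}). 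Writing both as ``$U\otimes_SU$'' silently identifies two a priori different objects of $\Mod(U^e)$, and this identification is exactly the nontrivial content of the step, not bookkeeping: it amounts to the bijectivity of the untwisting (Hopf--Galois type) map $T(S)\to F(U)$, $u\otimes v\mapsto(u\otimes 1)\cdot v$, which is left and right $U$-linear but whose invertibility requires an argument, e.g.\ induction along the filtration $F_pU$ (using flatness of $U$ over $S$ to identify the associated graded pieces, on which the map acts by $(-1)^p$). As written, your proof therefore has a genuine missing lemma; once that isomorphism is proved, the rest goes through.

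It is worth noting that the paper's proof is organised precisely to avoid this point: instead of comparing $F(U)$ with the induced bimodule $U^e\otimes_{S^e}S$, it shows directly that $F(U)$ is a compact object of $\mathcal D(U^e)$, using the adjunction $(F,G)$ to rewrite $\Hom_{U^e}(F(U),-)$ as $\Hom_{S^e}(S,-)$, the perfectness of $S$ over $S^e$, and the projectivity of $U^e$ over $S^e$ to commute with arbitrary direct sums. Your route (induction functor $T$ plus the untwisting isomorphism) is a legitimate and arguably more explicit alternative, but the isomorphism $F(U)\simeq T(S)$ must be stated and proved for it to be complete.
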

\begin{proof}
  Since $U$ is projective in $\Mod(S)$ (see part (2) of
  Lemma~\ref{sec:cont-notat-used}), the functor
  \[
  F \colon \Mod(U) \to \Mod(U^e)
  \]
  is exact. Moreover, $F(S) \simeq U$ and $S\in
  \mathrm{per}(U)$.
  Therefore, in order to prove that $U$ is homologically smooth, it
  suffices to prove that $F(U)\in \mathrm{per}(U^e)$, which is
  equivalent to $F(U)$ being compact in the derived category
  $\mathcal D(U^e)$ of complexes of $U$-bimodules. Here is a proof of
  this fact.  Let $(M_k)_{k\in K}$ be a family in $\mathcal D(U^e)$,
  denote $\oplus_{k\in K} M_k$ by $M$, and consider fibrant
  resolutions of complexes of $U$-bimodules $M_k\to i(M_k)$, for all
  $k\in K$, and $M\to i(M)$. Since $S$ is homologically smooth, then
  $S$ is compact in $\mathcal D(S^e)$, and hence the following natural
  mapping is a quasi-isomorphism,
\[
\bigoplus_{k\in K}\mathrm{Hom}_{S^e}(P^\bullet,M_k) \to
\mathrm{Hom}_{S^e}(P^\bullet,M)\,.
\]
Since $P^\bullet$ is a right bounded complex of projective $S$-bimodules, then
the functor $\mathrm{Hom}_{S^e}(P^\bullet,-)$ preserves
quasi-isomorphisms, and hence the following natural mapping is a
quasi-isomorphism,
\[
\bigoplus_{k\in K}\mathrm{Hom}_{S^e}(P^\bullet,i(M_k)) \to
\mathrm{Hom}_{S^e}(P^\bullet,i(M))\,.
\]
Since $U$ is projective over $S$ on both sides, $U^e$ is
projective in $\mathrm{Mod}(S^e)$. Therefore, for all fibrant
complexes $I$ of $U$-bimodules, the functor $\mathrm{Hom}_{S^e}(-,I)$
preserves quasi-isomorphisms. Accordingly, the following natural
mapping is a quasi-isomorphism:
\[
\bigoplus_{k\in K}\mathrm{Hom}_{S^e}(S,i(M_k)) \to \mathrm{Hom}_{S^e}(S,i(M))\,.
\]
Since the pair $(F,G)$ is adjoint and $G$ is induced by the functor
$\mathrm{Hom}_{S^e}(S,-)$, the following natural mapping is a
quasi-isomorphism:
\[
\bigoplus_{k\in K}\mathrm{Hom}_{U^e}(F(U),i(M_k)) \to
\mathrm{Hom}_{U^e}(F(U),i(M))\,.
\]
Taking cohomology in degree $0$ yields that the following natural
mapping is bijective:
\[
\bigoplus_{k\in K}\mathcal D(U^e)(F(U),i(M_k)) \to
\mathcal D(U^e)(F(U),i(M))\,.
\]
This proves that $F(U)$ is compact in $\mathcal D(U^e)$. Thus, $U$ is
homologically smooth.
\end{proof}

The authors thank Bernhard Keller for having pointed out an incorrect
argument in a previous version of this proof.

\begin{lem}
  \label{sec:isom-extb-ue}
  There is an isomorphism of graded right $U^e$-modules
  \begin{equation}
    \label{eq:26}
    \Ext^\bullet_{U^e}(U,U^e) \simeq H^\bullet(\Hom_U(Q^*,U)
    \otimes_U G(I^*))\,.
  \end{equation}
\end{lem}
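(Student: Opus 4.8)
The plan is to compute the complex $\Hom_{U^e}(U,I^*)$, whose cohomology is $\Ext^\bullet_{U^e}(U,U^e)$ by \eqref{eq:22}, by first replacing $U$ by $F(Q^\bullet)$ and then transporting the computation to $\Mod(U)$ along the adjunction $(F,G)$ of \ref{sec:adjunction-between-f}. First I would record that $F=U\otimes_S-$ is exact, since $U$ is projective, hence flat, as a left $S$-module (part (2) of Lemma~\ref{sec:cont-notat-used}); therefore the quasi-isomorphism $Q^\bullet\to S$ induces a quasi-isomorphism $F(Q^\bullet)\to F(S)=U$ of complexes of $U$-bimodules. I would also note that $G(I^*)$ is a complex of left $U$-modules (Lemma~\ref{sec:funct-gcol-modu}) which is moreover a complex of right $U^e$-modules: the right $U^e$-action carried by the complex of $U^e$-bimodules $I^*$ commutes with the left $U^e$-action, hence with the functor $M\mapsto M^S$, and the resulting right $U^e$-action on $G(I^*)$ commutes with its left $U$-module structure. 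In particular $\Hom_U(Q^*,U)\otimes_U G(I^*)$ inherits a right $U^e$-module structure.

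Next, since $I^*$ is a bounded below complex of injective $U^e$-modules (as established before the statement), the functor $\Hom_{U^e}(-,I^*)$ takes quasi-isomorphisms to quasi-isomorphisms; hence $F(Q^\bullet)\to U$ induces a quasi-isomorphism of complexes of right $U^e$-modules $\Hom_{U^e}(U,I^*)\to\Hom_{U^e}(F(Q^*),I^*)$. Applying the adjunction isomorphism of \ref{sec:adjunction-between-f} in each cohomological degree, and using its naturality in the $U^e$-module variable, yields an isomorphism of complexes of right $U^e$-modules $\Hom_{U^e}(F(Q^*),I^*)\simeq\Hom_U(Q^*,G(I^*))$. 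Finally, $Q^\bullet$ being a bounded complex of finitely generated projective $U$-modules, the canonical morphism
\[
\Hom_U(Q^*,U)\otimes_U G(I^*)\longrightarrow\Hom_U(Q^*,G(I^*))\,,\qquad f\otimes m\longmapsto\bigl(q\mapsto f(q)\cdot m\bigr)\,,
\]
is an isomorphism of complexes of right $U^e$-modules (it is one degreewise for every finitely generated projective $U$-module, and these assemble into an isomorphism of complexes). Composing all these identifications with \eqref{eq:22} and passing to cohomology gives \eqref{eq:26}.

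The only delicate point, and the one I would carry out with care, is the compatibility with the right $U^e$-module structures at each step: that this action survives the functor $G$, commutes there with the left $U$-module structure so that the tensor product in the statement is meaningful, and is preserved both by the adjunction isomorphism and by the tensor-hom isomorphism. All of these follow from the naturality of the constructions together with the observation that the right $U^e$-action is, for each element of $U^e$, a morphism of left $U^e$-modules (respectively of left $U$-modules). Beyond this bookkeeping I do not anticipate any genuine obstruction; the structural inputs are the exactness of $F$ and the existence of Rinehart's bounded, finitely generated projective resolution $Q^\bullet$ of $S$ in $\Mod(U)$ (\cite[Lemma~4.1]{MR0154906}).
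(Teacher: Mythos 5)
Your argument is correct and takes essentially the same route as the paper: the structural inputs are identical, namely the adjunction $(F,G)$, the exactness of $F$ (from part (2) of Lemma~\ref{sec:cont-notat-used}), the fact that $I^\bullet$ is a left bounded complex of injectives, and the tensor--evaluation isomorphism $\Hom_U(Q^\bullet,U)\otimes_U G(I^\bullet)\simeq \Hom_U(Q^\bullet,G(I^\bullet))$ for the bounded complex $Q^\bullet$ of finitely generated projectives, with the right $U^e$-structure inherited from $I^\bullet$ throughout. The only (harmless) difference is the order of the first two steps: the paper applies the adjunction to $U=F(S)$ to get $\Hom_{U^e}(U,I^\bullet)\simeq\Hom_U(S,G(I^\bullet))$ and then replaces $S$ by $Q^\bullet$, using that $G(I^\bullet)$ consists of injective left $U$-modules because $G$ is right adjoint to the exact functor $F$, whereas you replace $U$ by the quasi-isomorphic complex $F(Q^\bullet)$ first and apply the adjunction afterwards.
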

\begin{proof}
  Because of the isomorphism $F(S)\simeq U$ in $\Mod(U^e)$ and the
  adjunction $(F,G)$, there is a functorial isomorphism of complexes of
  right $U^e$-modules
  \begin{equation}
    \label{eq:23}
    \Hom_{U^e}(U,I^\bullet) \simeq \Hom_U(S,G(I^\bullet))\,.
  \end{equation}
  Since $F$ is exact and the pair $(F,G)$ is adjoint, $G(I^\bullet)$
  is a left bounded complex of injective left $U$-modules. Hence,
  $\Hom_U(-,G(I^\bullet))$ preserves quasi-isomorphisms. Thus, the
  quasi-isomorphism $Q^\bullet \to S$ induces a quasi-isomorphism of
  complexes of right $U^e$-modules
  \begin{equation}
    \label{eq:24}
    \Hom_U(S,G(I^\bullet)) \to \Hom_U(Q^\bullet,G(I^\bullet))\,.
  \end{equation}
  Since $Q^\bullet$ is bounded and consists of finitely generated
  projective left $U$-modules, the following canonical mapping is a
  functorial isomorphism:
  \begin{equation}
    \label{eq:25}
    \Hom_U(Q^\bullet,U) \otimes_U G(I^\bullet) \to
    \Hom_U(Q^\bullet,G(I^\bullet))\,.
  \end{equation}
  Note that, whether in (\ref{eq:23}), (\ref{eq:24}), or
  (\ref{eq:25}), the involved right $U^e$-module structures are
  inherited from $I^\bullet$. Thus, the announced isomorphism is
  proved.
\end{proof}

In order to examine the right-hand side of \eqref{eq:26} by means of a
spectral sequence, the following lemma describes $H^\bullet(G(I^*))$
as a graded $U-U^e$-bimodule.
\begin{lem}
  \label{sec:cohomology-gibullet}
  Consider $\Ext_{S^e}^\bullet(S,S^e)$ as a left $S \rtimes L$-module
  as in Section~\ref{sec:acti-l-extb}. Then, there is a
  $U-U^e$-bimodule structure on
  $\Ext_{S^e}^\bullet(S,S^e) \otimes_{S^e} U^e$ such that the
  $U^e$-module structure is inherited from $U^e$ and for all
  $\alpha\in L$, $c\in \Ext_{S^e}^\bullet(S,S^e)$ and $u,v\in U$,
  \[
  \alpha \cdot (c \otimes (u \otimes v)) = \alpha \cdot c \otimes (u
  \otimes v) + c \otimes ((\alpha \otimes 1 - 1 \otimes \alpha) \cdot
  (u \otimes v))\,.
  \]
  For this structure, there is an isomorphism of graded
  $U-U^e$-bimodules
  \[
  H^\bullet(G(I^*)) \simeq \Ext_{S^e}^\bullet(S,S^e)
  \otimes_{S^e} U^e\,.
  \]
\end{lem}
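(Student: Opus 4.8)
The plan is to realise $H^\bullet(G(I^*))$ as a derived functor of $G$ evaluated at $U^e$ and then to compute that derived functor explicitly, afterwards tracking the $U$-$U^e$-bimodule structure. First note that the right-hand side of the claimed isomorphism does carry the announced $U$-$U^e$-bimodule structure: it is obtained degreewise by applying part (1) of Lemma~\ref{sec:1} to the compatible left $S\rtimes L$-module $N=\Ext_{S^e}^\bullet(S,S^e)$ of Section~\ref{sec:acti-l-extb}. So the content is to identify $H^\bullet(G(I^*))$ with it.

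I would first observe that $I^\bullet$ is an injective resolution of $U^e$ not only in $\Mod(U^e)$ but even in $\Mod(S^e)$. Indeed, by part~(2) of Lemma~\ref{sec:cont-notat-used} the algebra $U$ is projective over $S$ on both sides, hence $U^e$ is projective, in particular flat, as a left $S^e$-module; therefore the extension-of-scalars functor $U^e\otimes_{S^e}-\colon \Mod(S^e)\to\Mod(U^e)$ is exact and its right adjoint, restriction of scalars along $S^e\to U^e$, preserves injectives. Since $G=\Hom_{S^e}(S,-)$ followed by this restriction, taking cohomology gives an isomorphism of graded right $U^e$-modules $H^\bullet(G(I^*))\simeq \Ext_{S^e}^\bullet(S,U^e)$, the right $U^e$-action coming functorially from $j\colon U^e\to I^\bullet$. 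Because $S\in\mathrm{per}(S^e)$, one may then fix a bounded resolution $P^\bullet\to S$ in $\Mod(S^e)$ by finitely generated projective modules, so that $\Ext_{S^e}^\bullet(S,-)=H^\bullet\Hom_{S^e}(P^\bullet,-)$; for $P^k$ finitely generated projective the canonical comparison map $\Hom_{S^e}(P^k,S^e)\otimes_{S^e}U^e\to\Hom_{S^e}(P^k,U^e)$ is bijective and natural in $k$, whence an isomorphism of complexes of right $U^e$-modules, and passing to cohomology (using once more that $U^e$ is flat over $S^e$) yields an isomorphism of graded right $U^e$-modules $\Ext_{S^e}^\bullet(S,S^e)\otimes_{S^e}U^e\xrightarrow{\sim}\Ext_{S^e}^\bullet(S,U^e)$. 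Composing the two settles the statement at the level of graded right $U^e$-modules.

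It remains to match the left $U$-module structures. Since a left $U$-module structure is determined by its underlying left $S$-module structure together with its left $L$-action, and the $S$-module structures agree by construction, only the $L$-actions must be compared. Fix $\alpha\in L$ and let $\delta_\alpha\colon U^e\to U^e$ be left multiplication by $\alpha\otimes 1-1\otimes\alpha$; the identity $[\alpha\otimes 1-1\otimes\alpha,\,s\otimes t]=\alpha(s)\otimes t+s\otimes\alpha(t)$ in $U^e$ shows that $\delta_\alpha$ is a derivation of the left $S^e$-module $U^e$ relative to $\partial_\alpha$, and likewise of any left $U^e$-module. I would then check that the $G$-action of $\alpha$ on $H^\bullet(G(I^*))=\Ext_{S^e}^\bullet(S,U^e)$ coincides with the Lie derivative $\mathcal L_{\partial_\alpha}$ taken with respect to $\delta_\alpha$: both are $\delta$-functorial operations on $\Ext_{S^e}^\bullet(S,-)$ over $\Mod(U^e)$ (naturality and compatibility with connecting homomorphisms being clear, since the relevant maps are $U^e$-linear), both reduce in degree $0$ to $m\mapsto(\alpha\otimes 1-1\otimes\alpha)\cdot m$ on $M^S$ by Lemma~\ref{sec:funct-gcol-modu}, and injective $U^e$-modules are acyclic for $\Ext_{S^e}^\bullet(S,-)$ because they restrict to injective $S^e$-modules; hence dimension shifting forces agreement in all degrees. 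Finally, evaluating $\mathcal L_{\partial_\alpha}$ on a cochain $p\mapsto\psi(p)\cdot x$ with $\psi\in\Hom_{S^e}(P^{-\bullet},S^e)$ a cocycle and $x\in U^e$, and using the commutator identity to write $(\alpha\otimes 1-1\otimes\alpha)\psi(p)=\partial_\alpha^e(\psi(p))+\psi(p)(\alpha\otimes 1-1\otimes\alpha)$, shows that under the isomorphism of the previous paragraph this action becomes $\alpha\cdot(c\otimes x)=\alpha\cdot c\otimes x+c\otimes\delta_\alpha(x)$, which is exactly the formula of Lemma~\ref{sec:1}(1).

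The two identifications of graded right $U^e$-modules are routine, resting only on the flatness of $U^e$ over $S^e$ and on $S\in\mathrm{per}(S^e)$. The delicate point, and the one I expect to be the main obstacle, is the comparison of the two descriptions of the $L$-action: the one carried by $G(I^\bullet)$, which is phrased through the injective resolution $I^\bullet$ via Lemma~\ref{sec:funct-gcol-modu}, and the Lie-derivative description, which is phrased through the projective resolution $P^\bullet$ of Section~\ref{sec:left-u-module}. Reconciling these requires both the dimension-shifting argument over $\Mod(U^e)$ and the commutator identity in $U^e$, and this is the heart of the proof.
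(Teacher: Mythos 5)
Your proof is correct, but it transports the bimodule structure by a genuinely different mechanism than the paper. On the level of graded right $U^e$-modules the two arguments are essentially the same computation: you pass through $\Ext^\bullet_{S^e}(S,U^e)$, using that $I^\bullet$ restricts to an injective resolution in $\Mod(S^e)$ (legitimate, since $U^e$ is projective over $S^e$) and that $\Hom_{S^e}(P^\bullet,S^e)\otimes_{S^e}U^e\to\Hom_{S^e}(P^\bullet,U^e)$ is an isomorphism, whereas the paper runs the zig-zag $\Hom_{S^e}(S,I^\bullet)\to\Hom_{S^e}(P^\bullet,I^\bullet)\leftarrow\Hom_{S^e}(P^\bullet,S^e)\otimes_{S^e}I^\bullet\leftarrow\Hom_{S^e}(P^\bullet,S^e)\otimes_{S^e}U^e$. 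The genuine difference is the left $U$-action: the paper endows each complex in its zig-zag with an explicit cochain-level $L$-action and checks every comparison map is $L$-equivariant, using \eqref{eq:17} and the commutator identity $(\alpha\otimes 1-1\otimes\alpha)(s\otimes t)-(s\otimes t)(\alpha\otimes 1-1\otimes\alpha)=\alpha(s)\otimes t+s\otimes\alpha(t)$; you instead view $H^\bullet(G(I^*))$ as $R^\bullet G(U^e)$ and prove, by agreement in degree $0$ (Lemma~\ref{sec:funct-gcol-modu}) plus acyclicity of injective $U^e$-modules for $\Ext^\bullet_{S^e}(S,-)$ and dimension shifting, that this action coincides with the Lie derivative relative to $\delta_\alpha$ for arbitrary coefficients in $\Mod(U^e)$, finishing with a single cochain computation for the tensor comparison (which I checked: it does yield the formula of Lemma~\ref{sec:1}). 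Your route gives a cleaner conceptual statement and fewer explicit verifications; its cost is that the work is concentrated in the assertions you declare clear, namely that both operations are natural and commute with the connecting homomorphisms of the long exact sequences over $\Mod(U^e)$ (for the Lie derivative this is a small chain-level check, for the derived-functor action the usual horseshoe argument) and that the canonical identification $R^\bullet G\simeq\Ext^\bullet_{S^e}(S,-)$ on $\Mod(U^e)$ respects these structures; these are standard but are exactly the points the paper chooses to verify explicitly at the level of cochains. Both arguments are complete.
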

\begin{proof}
  The object $G(I^\bullet)$ is $\Hom_{S^e}(S,I^\bullet)$ as a complex
  of $S$-modules, its right $U^e$-module structure is inherited from
  $I^\bullet$, and the one of left $U$-module is given in
  Section~\ref{sec:funct-gcol-modu-1}.

  First, since $U^e$ is projective in $\Mod(S^e)$ and $I^\bullet$
  consists of injective left $U^e$-modules, then $I^\bullet$ is a left
  bounded complex of injective left $S^e$-modules. Hence,
  $\Hom_{S^e}(-,I^\bullet)$ preserves quasi-isomorphisms. Thus,
  $\pi\colon P^\bullet \to S$ induces a quasi-isomorphism of complexes
  of right $S^e$-modules
  \begin{equation}
    \label{eq:36}
    \pi' \colon \Hom_{S^e}(S,I^\bullet) \to \Hom_{S^e}(P^\bullet,
    I^\bullet)\,.
  \end{equation}
  For all $\alpha \in L$, let $\partial_\alpha^\bullet \colon P^\bullet
  \to P^\bullet$ be a derivation relative to $\partial_\alpha \colon
  S \to S$ (see Section~\ref{sec:deriv-homot}), and denote by $\delta_\alpha^\bullet$ the mapping from
  $I^\bullet$ to $I^\bullet$ given by
  \[
  i \mapsto (\alpha \otimes 1 - 1 \otimes \alpha)\cdot i\,.
  \]
  Then, define $\alpha \cdot f$ and $\alpha \cdot g$, for all
  $f \in \Hom_{S^e}(S,I^\bullet)$ and
  $g \in \Hom_{S^e}(P^\bullet,I^\bullet)$, by
  \[
  \begin{array}{rcl}
    \alpha \cdot f
    & = &
          \delta_\alpha^\bullet \circ f - f
          \circ \partial_\alpha \\
    \alpha \cdot g
    & = &
          \delta_\alpha^\bullet \circ g - g
          \circ \partial_\alpha^\bullet\,;
  \end{array}
  \]
  since $\pi \circ \partial_\alpha^\bullet = \partial_\alpha
  \circ \pi$,
  \[
  \pi'(\alpha \cdot f) = \alpha \cdot \pi'(f)\,.
  \]

  The hypotheses on $P^\bullet$ yield  an
  isomorphism of complexes of right $U^e$-modules,
  \begin{equation}
    \label{eq:37}
    \mathrm{ev} \colon \Hom_{S^e}(P^\bullet, S^e) \otimes_{S^e} I^\bullet \to
    \Hom_{S^e}(P^\bullet, I^\bullet)\,.
  \end{equation}
  Endow the left-hand side term with the following action of $L$.  For
  all $\alpha \in L$ and
  $\varphi \otimes i \in \Hom_{S^e}(P^\bullet, S^e) \otimes_{S^e}
  I^\bullet$,
  denote by $\alpha \cdot (\varphi \otimes i)$ the (well-defined)
  element of $\Hom_{S^e}(P^\bullet,S^e) \otimes_{S^e} I^\bullet$,
  \[
  \alpha \cdot \varphi \otimes i + \varphi \otimes (\delta_\alpha^\bullet i)\,.
  \]
  The assignment
  $\varphi \otimes i \mapsto \alpha \cdot (\varphi \otimes i)$ is a
  morphism of complexes of $R$-modules from
  $\Hom_{S^e}(P^\bullet, S^e) \otimes_{S^e} I^\bullet$ to itself. In
  view of (\ref{eq:17}) and of the identity
  \[
  (\alpha \otimes 1 - 1 \otimes \alpha) \cdot ((s \otimes t) \cdot j)
  = \partial_\alpha(s \otimes t) \cdot j + (s \otimes t) \cdot (\alpha
  \otimes 1 - 1 \otimes \alpha) \cdot j
  \]
  in $I^\bullet$, for all $s,t\in S$ and $j\in I^\bullet$, the
  following holds
  \begin{equation}
    \label{eq:68}
    \mathrm{ev}(\alpha \cdot (\varphi \otimes i)) = \alpha \cdot \mathrm{ev}(\varphi \otimes i)\,.
  \end{equation}

  $\Hom_{S^e}(P^\bullet,S^e)$ is also a bounded complex of
  projective right $S^e$-modules. Hence, the functor
  $\Hom_{S^e}(P^\bullet,S^e) \otimes_{S^e}-$ preserves
  quasi-isomorphisms. Thus, $j \colon U^e\to
  I^\bullet$ induces a quasi-isomorphism of right $U^e$-modules,
  \begin{equation}
    \label{eq:38}
    \mathrm{Id} \otimes j \colon \Hom_{S^e}(P^\bullet,S^e) \otimes_{S^e}
    U^e \to \Hom_{S^e}(P^\bullet, S^e) \otimes_{S^e} I^\bullet\,.
  \end{equation}
  Endow the left-hand side term with the following action of $L$.
  For all $\alpha\in L$, $\varphi \in \Hom_{S^e}(P^\bullet,S^e)$ and
  $u,v\in U$, denote by $\alpha \cdot (\varphi \otimes (u \otimes v))$
  the following (well-defined) element of $\Hom_{S^e}(P^\bullet,S^e)
  \otimes_{S^e} U^e$:
  \[
  \alpha \cdot \varphi \otimes (u \otimes v)
  + \varphi \otimes ((\alpha \otimes 1 - 1 \otimes \alpha) \cdot (u
  \otimes v))\,;
  \]
  The assignment $\varphi \otimes (u \otimes v) \mapsto
  \alpha  \cdot (\varphi \otimes (u \otimes v))$ is a morphism of
  complexes of $R$-modules from $\Hom_{S^e}(P^\bullet,S^e) \otimes_{S^e}
  U^e$ to itself, and
  \[
  (\mathrm{Id}\otimes j)( \alpha \cdot (\varphi \otimes (u \otimes v))
  = \alpha \cdot ((\mathrm{Id} \otimes j)(\varphi \otimes (u \otimes
  v)))
  \]
  because $j\colon U^e \to I^\bullet$ is a morphism of complexes of
  $U^e-U^e$-bimodules.

  Since $U^e$ is projective in $\Mod(S^e)$, there is
  an isomorphism of right $U^e$-modules,
  \begin{equation}
    \label{eq:27}
    H^\bullet(\Hom_{S^e}(P^*,S^e) \otimes_{S^e} U^e)
    \simeq \Ext_{S^e}^\bullet(S,S^e) \otimes_{S^e} U^e\,.
  \end{equation}
  For all cocycles $\varphi \in \Hom_{S^e}(P^\bullet,S^e)$, with
  cohomology class denoted by $c$, and for all $\alpha \in L$ and
  $u,v\in U$, the image under (\ref{eq:27}) of the cohomology class of
  \[
  \alpha \cdot (\varphi \otimes (u \otimes v))
  \]
  is
  \begin{equation}
    \label{eq:28}
    \alpha \cdot c \otimes (u \otimes v) + c \otimes ((\alpha \otimes 1
    - 1 \otimes \alpha) \cdot (u \otimes v))
  \end{equation}
  where $\alpha \cdot c$ is defined in \ref{sec:acti-l-extb} (see (\ref{eq:20}))

  Combining \eqref{eq:36}, \eqref{eq:37}, \eqref{eq:38}, \eqref{eq:27}
  yields an isomorphism of right $U^e$-modules,
  \begin{equation}
    \label{eq:29}
    \Ext_{S^e}^\bullet(S,S^e) \otimes_{S^e} U^e \xrightarrow{\sim}
    H^\bullet(G(I^*))\,,
  \end{equation}
  such that, for all $\alpha \in L$, $c\in \Ext_{S^e}^\bullet(S,S^e)$
  and $u,v\in U$, if $\gamma$ denotes the image of $c \otimes (u \otimes
  v)$ under (\ref{eq:29}), then $\alpha \cdot \gamma$ is the image of
  (\ref{eq:28}).

  Thus, applying part (1) of Lemma~\ref{sec:1} to
  $N=\Ext_{S^e}^\bullet(S,S^e)$ yields
  the announced conclusion.
\end{proof}

\begin{proof}[Proof of Proposition~\ref{sec:inverse-dual-bimod}]
  The statement relative to the left $U$-module structure on
  $\Lambda^d_SL^\vee \otimes \Ext^\bullet_{S^e}(S,S^e)$ follows from
  Lemma~\ref{sec:left-right-u}, and Lemma~\ref{sec:u-homol-smooth}
  shows that $U$ is homologically smooth.  The (first quadrant,
  cohomological) spectral sequence of the bicomplex
  \begin{equation}
    \label{eq:31}
    ( \Hom_U(Q^p,U) \otimes_U G(I^q) )_{p,q}
  \end{equation}
  converges to $H^\bullet(\Hom_U(Q^*,U) \otimes_U G(I^*))$ and its
  $E_2^{p,q}$-term is, for all $p,q\in \mathbb Z$,
  \[
  H^p_h( H^q_v( \Hom_U(Q^\bullet,U) \otimes_U G(I^\bullet) )\,.
  \]
  Since $\Hom_U(Q^\bullet,U)$ consists of
  projective right $U$-modules, there is an isomorphism of right
  $U^e$-modules, for all $p,q\in \mathbb Z$,
  \begin{equation}
    \label{eq:33}
    H^q(\Hom_U(Q^p,U) \otimes_U G(I^\bullet))
    \simeq
    \Hom_U(Q^p,U) \otimes_U H^q(G(I^\bullet))\,.
  \end{equation}
  The description of $H^\bullet(G(I^*))$ made in
  Lemma~\ref{sec:cohomology-gibullet} combines with (\ref{eq:33}) into
  the following isomorphism of right $U^e$-modules, for all
  $p,q \in \mathbb Z$,
  \begin{equation}
    \label{eq:39}
    H^q(\Hom_U(Q^p,U) \otimes_U G(I^\bullet)) \simeq \Hom_U(Q^p,U)
    \otimes_U (\mathrm{Ext}_{S^e}^q(S,S^e) \otimes_{S^e} U^e)\,.
  \end{equation}
  Using Lemma~\ref{sec:1} (part (2)), this isomorphism yields
  isomorphism of right $U^e$-modules, for all $p,q \in \mathbb Z$:
  \begin{equation}
    \label{eq:34}
    H^q(\Hom_U(Q^p,U) \otimes_U G(I^\bullet))
    \simeq
    F( \Hom_U(Q^p,U) \otimes_S \Ext_{S^e}^q(S,S^e) )\,.
  \end{equation}
  Given that $F$ is an exact functor, that $\Ext^q_{S^e}(S,S^e)$ is
  projective in $\Mod(S)$ for all $q$ and that $(S,L)$ has duality in
  dimension $d$, it follows from (\ref{eq:34}) that there is an
  isomorphism of right $U^e$-modules, for all $p,q \in \mathbb Z$,
  \[
  H^p_h ( H^q_v ( \Hom_U(Q^\bullet,U) \otimes_U G(I^\bullet) ) )
  \simeq
  \left\{
    \begin{array}{rl}
      F( \Ext_U^d(S,U) \otimes_S \Ext^q_{S^e}(S,S^e))
      &
        \text{if $p=d$,} \\
      0
      &
        \text{if $p \neq d$.}
    \end{array}
  \right.
  \]
  Therefore, the spectral sequence of the bicomplex (\ref{eq:31})
  degenerates at $E_2$. Thus,
  \[
  H^\bullet(\Hom_U(Q^*,U) \otimes_U G(I^*)) \simeq
  F(\Ext_U^d(S,U) \otimes_S \Ext^{\bullet-d}_{S^e}(S,S^e))\ \ \text{in
    $\Mod(S^e)$.}
  \]
  The conclusion follows from (\ref{eq:26}) and from the
  isomorphism $\Ext^d_U(S,U)\simeq \Lambda^d_SL^\vee$ in $\Mod(U)$
  established in \cite[Theorem 2.10]{MR1696093}
\end{proof}

\subsection{Proof of the main theorem}
\label{sec:proof-theor-refs}

\begin{proof}[Proof of Theorem~\ref{sec:introduction}]
  Following Proposition~\ref{sec:inverse-dual-bimod}, $U$ is
  homologically smooth and there is an isomorphism of graded right
  $U^e$-modules
  \[
  \Ext^\bullet_{U^e}(U,U^e) \simeq F(\Lambda^d_SL^\vee \otimes_S
  \Ext^{\bullet -d}_{S^e}(S,S^e))\,.
  \]
  According to Proposition~\ref{sec:4}, the functor $F$ transforms left
  $U$-modules that are invertible as $S$-modules into invertible
  $U$-bimodules. Note that 
  \begin{itemize}
  \item $\Lambda^d_SL^\vee$ is invertible as an $S$-module because $L$
    is projective with constant rank and
  \item $\Ext^\bullet_{S^e}(S,S^e)$ is concentrated in degree $n$ and
    $\Ext^n_{S^e}(S,S^e)$ is invertible as an $S$-module because $S$ has
    Van den Bergh duality.
  \end{itemize}
  Thus, $\Ext^\bullet_{U^e}(U,U^e)$ is concentrated in degree $n+d$ and
  $\Ext^{n+d}_{U^e}(U,U^e)$ is invertible as a $U$-bimodule. Hence, $U$
  has Van den Bergh duality in dimension $n+d$.
\end{proof}

\subsection{Proof of Theorem~\ref{sec:introduction-1}}
\label{sec:proof-theor-refs-1}

The hypotheses of Theorem~\ref{sec:introduction-1} are
assumed throughout the subsection. Let $\varphi_L$ be a free generator of
the $S$-module $\Lambda^d_SL^\vee$. Let $e_S$ be a free generator of the
$S$-module $\Ext^n_{S^e}(S,S^e)$. Therefore, there exist mappings
\[
\lambda_L,\lambda_S \colon L \to S
\]
such that, for all $\alpha \in L$,
\[
\left\{
  \begin{array}{rcl}
    \alpha \cdot e_S & = & \lambda_S(\alpha)\cdot e_S \\
    \varphi_L \cdot \alpha & = & \lambda_L(\alpha)\cdot \varphi_S\,.
  \end{array}
\right.
\]
Some basic properties of these are summarised below.
\begin{lem}
  \label{sec:trace-maps-lambda_l}
  Let $\lambda$ be either one of $\lambda_S$ or $\lambda_L$. Then, for
  all $\alpha,\beta\in L$ and $s\in S$,
  \begin{enumerate}
  \item $\lambda(s\alpha) = s \lambda(\alpha) - \alpha(s)$;
  \item $\lambda([\alpha,\beta]) = \alpha(\lambda(\beta)) - \beta(\lambda(\alpha))$.
  \end{enumerate}
\end{lem}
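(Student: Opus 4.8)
The plan is the same for both $\lambda=\lambda_S$ and $\lambda=\lambda_L$: translate each identity into an identity inside the relevant rank-one $S$-module — namely $\Ext^n_{S^e}(S,S^e)$ with free generator $e_S$, or $\Lambda^d_SL^\vee$ with free generator $\varphi_L$ — using only the defining relations $\alpha\cdot e_S=\lambda_S(\alpha)\,e_S$ and $\varphi_L\cdot\alpha=\lambda_L(\alpha)\,\varphi_L$ together with the (bi)module axioms recalled in~\ref{sec:basic-constr-u} and the Lie-derivative identities of Lemma~\ref{sec:lie-derivatives-1}, and then cancel the free generator at the end.

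Consider first $\lambda_S$. Here the action of $L$ on $\Ext^n_{S^e}(S,S^e)$ is by the Lie derivatives $\mathcal L_{\partial_\alpha}$ and makes it a \emph{compatible} left $S\rtimes L$-module (see~\ref{sec:acti-l-extb}). For (1), compatibility~\eqref{eq:53} gives $(s\alpha)\cdot e_S=s\cdot(\alpha\cdot e_S)-\alpha(s)\cdot e_S=\bigl(s\,\lambda_S(\alpha)-\alpha(s)\bigr)e_S$, and comparison with $(s\alpha)\cdot e_S=\lambda_S(s\alpha)\,e_S$ yields the claim. For (2), since $\alpha\mapsto\partial_\alpha$ is a homomorphism of $R$-Lie algebras and $\mathcal L$ is compatible with brackets (part (2) of Lemma~\ref{sec:lie-derivatives-1}), one has $[\alpha,\beta]\cdot e_S=\alpha\cdot(\beta\cdot e_S)-\beta\cdot(\alpha\cdot e_S)$; expanding $\alpha\cdot(\lambda_S(\beta)\,e_S)=\mathcal L_{\partial_\alpha}(\lambda_S(\beta)\,e_S)$ by the Leibniz rule of part (1) of Lemma~\ref{sec:lie-derivatives-1} gives $\alpha(\lambda_S(\beta))\,e_S+\lambda_S(\beta)\lambda_S(\alpha)\,e_S$, and symmetrically for the other term; the degree-zero products cancel because $S$ is commutative, leaving $\lambda_S([\alpha,\beta])\,e_S=\bigl(\alpha(\lambda_S(\beta))-\beta(\lambda_S(\alpha))\bigr)e_S$.

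Now consider $\lambda_L$. Here $\Lambda^d_SL^\vee$ is a genuine right $U$-module (see~\eqref{eq:67}), whose right $S$-action is the $S$-module structure appearing in the definition of $\lambda_L$. For (1), the right $U$-module axiom gives $\varphi_L\cdot(s\alpha)=(\varphi_L\cdot s)\cdot\alpha$, and the right $S\rtimes L$-module axiom $(\varphi_L\cdot\alpha)\cdot s=\varphi_L\cdot\alpha(s)+(\varphi_L\cdot s)\cdot\alpha$ then gives $(\varphi_L\cdot s)\cdot\alpha=(\lambda_L(\alpha)\,\varphi_L)\cdot s-\alpha(s)\,\varphi_L=\bigl(s\,\lambda_L(\alpha)-\alpha(s)\bigr)\varphi_L$, whence the claim. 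For (2), the image of $[\alpha,\beta]$ in $U$ is the commutator $\alpha\beta-\beta\alpha$ by construction of $U$, so $\varphi_L\cdot[\alpha,\beta]=(\varphi_L\cdot\alpha)\cdot\beta-(\varphi_L\cdot\beta)\cdot\alpha$; rewriting $(\lambda_L(\alpha)\,\varphi_L)\cdot\beta$ by the right $S\rtimes L$-module axiom as $(\varphi_L\cdot\beta)\cdot\lambda_L(\alpha)-\beta(\lambda_L(\alpha))\,\varphi_L=\bigl(\lambda_L(\beta)\lambda_L(\alpha)-\beta(\lambda_L(\alpha))\bigr)\varphi_L$, and symmetrically for the other term, then subtracting, the products cancel and one is left with $\lambda_L([\alpha,\beta])\,\varphi_L=\bigl(\alpha(\lambda_L(\beta))-\beta(\lambda_L(\alpha))\bigr)\varphi_L$.

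I do not expect a real obstacle: the whole argument is bookkeeping with the module axioms. The only point needing care is that $\Ext^n_{S^e}(S,S^e)$ is merely a compatible left $S\rtimes L$-module, not a $U$-module, so its Lie-algebra action must be obtained from part (2) of Lemma~\ref{sec:lie-derivatives-1} rather than from associativity in $U$, whereas $\Lambda^d_SL^\vee$ is a genuine right $U$-module and the bracket relation there comes directly from the commutator in $U$; keeping the signs straight across the two sets of axioms is the main (minor) source of error.
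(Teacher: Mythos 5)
Your proof is correct and follows essentially the same route as the paper: for $\lambda_S$ you use the compatibility relation \eqref{eq:53} and the Leibniz/bracket identities of Lemma~\ref{sec:lie-derivatives-1} applied to the free generator $e_S$, and for $\lambda_L$ the right $S\rtimes L$- and $U$-module axioms applied to $\varphi_L$, exactly as the paper does (it only states the $\lambda_L$ case as ``analogous''). The extra details you supply for the $\lambda_L$ case, including the cancellation of the commutative products before dividing by the free generator, are accurate.
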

\begin{proof}
  Assume that $\lambda = \lambda_S$. Let $s\in S$ and $\alpha \in
  L$. Then, using \ref{sec:acti-l-extb},
  \[
  \begin{array}{rcl}
    (s \alpha) \cdot e_S
    & = &
          s \cdot(\alpha \cdot e_S) -\alpha(s) \cdot e_S \\
    & = &
          (s \lambda(\alpha) - \alpha(s)) \cdot e_S,
  \end{array}
  \]
  which proves (1), and
  \[
  \begin{array}{rcl}
    \alpha \cdot (\beta \cdot e_S)
    & = &
          \alpha \cdot (\lambda(\beta)\cdot e_S) \\
    & = &
          \alpha(\lambda(\beta)) \cdot e_S + \lambda(\beta) \cdot
          (\alpha \cdot e_S) \\
    & = &
          (\alpha(\lambda(\beta)) + \lambda(\alpha) \lambda(\beta))
          \cdot e_S,
  \end{array}
  \]
  from which (2) may be proved directly. The proof when
  $\lambda=\lambda_L$ is analogous, using the right $U$-module
  structure of $\Lambda^d_SL^\vee$ instead of \ref{sec:acti-l-extb}.
\end{proof}

As proved later, the following automorphism is a Nakayama automorphism
for $U$.
\begin{lem}
  \label{sec:an-automorphism-u}
  There exists a unique $R$-algebra homomorphism,
  \[
  \nu \colon U \to U\,,
  \]
  such that, for all $s\in S$ and $\alpha \in L$,
  \[
  \left\{
    \begin{array}{rcl}
      \nu(s) & = & s \\
      \nu(\alpha) & = & \alpha + \lambda_L(\alpha) -
                        \lambda_S(\alpha)\,. 
    \end{array}
  \right.
  \]
  This is an automorphism of $R$-algebra.
\end{lem}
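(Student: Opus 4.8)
The plan is to produce $\nu$ from the universal property of $U$ recalled in the proof of Lemma~\ref{sec:cont-notat-used}(1): giving an $R$-algebra homomorphism from $U$ to an $R$-algebra $A$ is the same as giving an $R$-algebra homomorphism $\iota_S\colon S\to A$ together with an $R$-linear map $\iota_L\colon L\to A$ such that $s+\alpha\mapsto \iota_S(s)+\iota_L(\alpha)$ is a homomorphism of $R$-Lie algebras from $S\oplus L$ to $A$ and $\iota_L(s\alpha)=\iota_S(s)\,\iota_L(\alpha)$; spelled out, this amounts to the identities $\iota_L([\alpha,\beta])=[\iota_L(\alpha),\iota_L(\beta)]$, $[\iota_L(\alpha),\iota_S(s)]=\iota_S(\alpha(s))$ and $\iota_L(s\alpha)=\iota_S(s)\iota_L(\alpha)$ for all $s\in S$, $\alpha,\beta\in L$. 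First I would take $A=U$, let $\iota_S\colon S\hookrightarrow U$ be the canonical map and set $\iota_L(\alpha)=\alpha+\mu(\alpha)$, where $\mu:=\lambda_L-\lambda_S\colon L\to S$ is viewed inside $U$. Since $\lambda_L$ and $\lambda_S$ are $R$-linear and the two identities of Lemma~\ref{sec:trace-maps-lambda_l} are $R$-linear in $\lambda$, the map $\mu$ is $R$-linear and satisfies $\mu(s\alpha)=s\mu(\alpha)$ and $\mu([\alpha,\beta])=\alpha(\mu(\beta))-\beta(\mu(\alpha))$.

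Next I would check the three identities directly inside $U$, using only that the image of $S$ in $U$ is commutative, the relation $\alpha s=s\alpha+\alpha(s)$, and the identification of the module action $s\alpha$ with the product $s\cdot\alpha$ in $U$. Expanding the bracket and using $\alpha s=s\alpha+\alpha(s)$ and commutativity of $S$ gives $[\alpha+\mu(\alpha),\beta+\mu(\beta)]=[\alpha,\beta]+\alpha(\mu(\beta))-\beta(\mu(\alpha))=[\alpha,\beta]+\mu([\alpha,\beta])=\iota_L([\alpha,\beta])$; the second identity is immediate since $[\alpha+\mu(\alpha),s]=\alpha(s)$; and the third reads $\iota_L(s\alpha)=s\alpha+\mu(s\alpha)=s\cdot\alpha+s\cdot\mu(\alpha)=s(\alpha+\mu(\alpha))=\iota_S(s)\iota_L(\alpha)$. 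The universal property then yields the desired $R$-algebra homomorphism $\nu\colon U\to U$, and its uniqueness among $R$-algebra endomorphisms taking the prescribed values on $S$ and $L$ follows because $S\cup L$ generates $U$ as an $R$-algebra.

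Finally, to see that $\nu$ is an automorphism, I would observe that the construction above applies verbatim to any $R$-linear map $\mu'\colon L\to S$ satisfying the two identities of Lemma~\ref{sec:trace-maps-lambda_l}, producing an $R$-algebra endomorphism $\nu_{\mu'}$ of $U$ that fixes $S$ pointwise and sends $\alpha$ to $\alpha+\mu'(\alpha)$. Applying this to $\mu$ and to $-\mu$ (which satisfies the same identities), the composites $\nu_\mu\circ\nu_{-\mu}$ and $\nu_{-\mu}\circ\nu_\mu$ fix $S$ pointwise and send each $\alpha\in L$ to $\alpha+\mu(\alpha)-\mu(\alpha)=\alpha$ (using $\mu(\alpha)\in S$ and that both maps fix $S$), hence equal $\mathrm{id}_U$; so $\nu=\nu_\mu$ is bijective with inverse $\nu_{-\mu}$. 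I expect no serious obstacle: the delicate points are merely the bookkeeping between the $S$-module structure of $L$ and multiplication in $U$, and recognising that the two identities of Lemma~\ref{sec:trace-maps-lambda_l} are exactly what makes $(\iota_S,\iota_L)$ compatible with the Lie bracket and the module relations defining $U$.
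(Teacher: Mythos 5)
Your proof is correct. For existence and uniqueness you do essentially what the paper does: verify that $\alpha\mapsto\alpha+\lambda_L(\alpha)-\lambda_S(\alpha)$, together with the identity on $S$, respects the bracket, the relation $[\nu_\alpha,s]=\alpha(s)$ and $S$-linearity, the key input being Lemma~\ref{sec:trace-maps-lambda_l}; the paper leaves the appeal to the presentation of $U$ implicit, while you invoke explicitly the universal property recalled in the proof of Lemma~\ref{sec:cont-notat-used}(1), which is legitimate. Where you genuinely diverge is bijectivity: the paper observes that $\nu$ preserves the filtration of $U$ by the powers of $L$ and that $\mathrm{gr}(\nu)$ is the identity, hence $\nu$ is bijective, whereas you rerun the same universal-property construction with $-\mu$ (where $\mu=\lambda_L-\lambda_S$) and check that $\nu_{\mu}$ and $\nu_{-\mu}$ are mutually inverse on the generating set $S\cup L$. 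Your route is more explicit, since it exhibits the inverse automorphism, and avoids relying on properties of the filtration; the paper's argument is shorter but rests on the standard fact that an endomorphism inducing the identity on the associated graded algebra of an exhaustive filtration is bijective. One small wording point: identity (1) of Lemma~\ref{sec:trace-maps-lambda_l} is affine, not linear, in $\lambda$ (because of the term $-\alpha(s)$); what you actually use, and correctly state, is that this term cancels in the difference, so that $\mu$ and $-\mu$ satisfy the homogeneous identities $\mu(s\alpha)=s\mu(\alpha)$ and $\mu([\alpha,\beta])=\alpha(\mu(\beta))-\beta(\mu(\alpha))$; it is these, rather than the identities of the Lemma taken verbatim, that your construction applied to $\pm\mu$ requires, so the phrase ``any $\mu'$ satisfying the two identities of Lemma~\ref{sec:trace-maps-lambda_l}'' in your last paragraph should be adjusted accordingly. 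This is a slip of formulation, not a gap in the argument.
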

\begin{proof}
  The uniqueness is immediate. For all $\alpha \in L$, denote
  $\alpha + \lambda_L(\alpha) - \lambda_S(\alpha)$ by
  $\nu_\alpha$. Then, for all $s\in S$ and $\alpha,\beta\in L$,
  \[
  \begin{array}{rcl}
    [\nu_\alpha,\nu_\beta]
    & = &
          [\alpha + \lambda_L(\alpha) - \lambda_S(\alpha),
          \beta + \lambda_L(\beta) - \lambda_S(\beta)] \\
    & \underset{\text{\tiny Lemma \ref{sec:trace-maps-lambda_l}}}=
        &
          [\alpha,\beta] + \lambda_L([\alpha,\beta]) -
          \lambda_S([\alpha,\beta])
          =
          \nu_{[\alpha,\beta]} \\ \\
    \nu_{s \alpha}
    & = &
          s\alpha + \lambda_L(s\alpha) - \lambda_S(s\alpha) \\
    & \underset{\text{\tiny Lemma \ref{sec:trace-maps-lambda_l}}}=
        &
          s \alpha + s \lambda_L(\alpha) - s \lambda_S(\alpha)
    =
          s \nu_\alpha \\ \\
    {[\nu_\alpha, s]}
    & = &
          [\alpha +\lambda_L(\alpha) - \lambda_L(\alpha) , s] 
    =
          \alpha(s)\,.          
  \end{array}
  \]
  This proves the existence of $\nu$. Note that $\nu$ preserves the
  filtration of $U$ by the powers of $L$ and that $\mathrm{gr}(\nu)$ is
  the identity mapping of $U$. Accordingly, $\nu$ is bijective.
\end{proof}

Now it is possible to prove Theorem~\ref{sec:introduction-1}.
\begin{proof}[Proof of Theorem~\ref{sec:introduction-1}]
  From Theorem~\ref{sec:introduction}, $U$ has Van den Bergh
  duality in dimension $n+d$ and there is an isomorphism of
  $U$-bimodules
  \begin{equation}
    \label{eq:42}
    \Ext^{n+d}_{U^e}(U,U^e) \simeq F( \Lambda_S^d\Lambda^\vee \otimes_S
    \Ext^n_{S^e}(S,S^e)) \,,
  \end{equation}
  where the tensor product inside $F(\bullet)$ is a left $U$-module by
  (\ref{eq:50}).

  Recall that $\Lambda^d_SL^\vee$ and
  $\Ext^n_{S^e}(S,S^e)$ are freely generated by $\varphi_L$ and
  $e_S$, respectively. Therefore, the following mapping is an
  isomorphism of left $U$-modules (see Section~\ref{sec:functor-fcolon-modu})
  \begin{equation}
    \label{eq:43}
    \begin{array}{crcl}
      \Phi \colon & U & \to & F(\Lambda^d_SL^\vee \otimes_S
                              \Ext^n_{S^e}(S,S^e)) \\
                  & u & \mapsto & u \otimes (\varphi_L \otimes e_S)\,.
    \end{array}
  \end{equation}
  For all $s\in S$, $\alpha \in L$ and $u\in U$,
  \[
    \Phi(u) s
    =
          (u \otimes (\varphi_L \otimes e_S))\cdot s
    =
          us \otimes (\varphi_L \otimes e_S) 
    =
    \Phi(us)\,,
    \]
    \[
    \begin{array}{rcl}
    \Phi(u) \alpha
    & = &
          (u \otimes (\varphi_L \otimes e_S)) \cdot \alpha \\
    & = &
          u \alpha \otimes ( \varphi_L \otimes e_S )
          - u \otimes \alpha \cdot ( \varphi_L \otimes e_S ) \\
    & = &
          u \alpha \otimes ( \varphi_L \otimes e_S)
          - ( - u \otimes ( \varphi_L \cdot \alpha \otimes e_S )
          + u \otimes (\varphi_L \otimes \alpha \cdot e_S )) \\
    & = &
          ( u (\alpha + \lambda_L(\alpha) - \lambda_S(\alpha)))
          \otimes ( \varphi_L \otimes e_S ) \\
    & = &
          \Phi(u (\alpha + \lambda_L(\alpha) - \lambda_S(\alpha)))\,.
  \end{array}
  \]
  Thus, denoting by $\nu$ the automorphism of $U$ considered in
  Lemma~\ref{sec:an-automorphism-u}, then, for all $u,v\in U$,
  \begin{equation}
    \label{eq:44}
    \Phi(u)\cdot v = \Phi(u \nu(v))\,.
  \end{equation}
  Combining (\ref{eq:42}), (\ref{eq:43}) and
  (\ref{eq:44}) yields that there is an isomorphism of bimodules
  \[
  \Ext^{n+d}_{U^e}(U,U^e) \simeq U^\nu\,.
  \]
  Since $\lambda_S=-\mathrm{div}$ (see
  Proposition~\ref{sec:particular-case-van}), this proves
  Theorem~\ref{sec:introduction-1}.
\end{proof}

\subsection{Case of Poisson algebras}
\label{sec:case-poiss-algebr}

\begin{proof}[Proof of Corollary~\ref{sec:main-results-1}]
  From Proposition~\ref{sec:relat-regul}, $S$ has Van den Bergh
  duality in dimension $n$. Moreover,
  Proposition~\ref{sec:particular-case-van} yields an isomorphism of
  $S$-modules
  $\Lambda^n_S\mathrm{Der}_R(S) \simeq \Ext^n_{S^e}(S,S^e)$ which is
  compatible with the action of Lie derivatives. Finally, according to
  (\ref{eq:41}), the dualising module of $(S,\Omega_{S/R})$ is
  $\Lambda^n_S\mathrm{Der}_R(S)$ with right $U$-module structure such
  that, for all $s\in S$ and
  $\varphi \in \Lambda^n_S\mathrm{Der}_R(S)$,
  \[
  \varphi \cdot ds = -\mathcal L_{\{s,-\}}(\varphi)\,.
  \]
  Using these considerations, the corollary follows from
  Theorems~\ref{sec:introduction} and \ref{sec:introduction-1}.
\end{proof}

\section{Examples}
\label{sec:example}

\subsection{The case where $L$ is free as an $S$-module}
\label{sec:when-l-free}

In this subsection, it is assumed that $L$ is free as an
$S$-module. Consider a basis $(\alpha_1,\ldots,\alpha_d)$ of $L$ over
$S$. Denote the dual basis of $L^\vee$ by
$(\alpha_1^*,\ldots,\alpha_d^*)$. In particular, $\Lambda^d_SL^\vee$ is
free of rank one in $\Mod(S)$, with a generator denoted by
$\varphi_L$,
\[
\varphi_L = \alpha_1^*\wedge \cdots \wedge \alpha_d^*\,.
\]
For all $i\in \{1,\ldots,d\}$, consider the matrix of $\mathrm{ad}_{\alpha_i}$, denoted by $(s_{j,k}^i)_{j,k} \in M_d(S)$. Hence, for all
$i,k\in \{1,\ldots,d\}$,
\[
[\alpha_i,\alpha_k] = \sum_{j=1}^d s^i_{j,k}\alpha_j\,.
\]
In this situation, the action of $L$ on $\Lambda^\bullet_SL$ by Lie
derivatives specialises as follows. For all $i,j,k\in
\{1,\ldots,d\}$,
\[
\begin{array}{rcl}
  (\lambda_{\alpha_i}(\alpha_j^*))(\alpha_k)
  & = &
        \alpha_i(\alpha_j^*(\alpha_k)) -
        \alpha_j^*([\alpha_i,\alpha_k]) \\
  & = &
        - s_{j,k}^i\,.
\end{array}
\]
Hence, for all $i,j\in \{1,\ldots,d\}$,
\[
\lambda_{\alpha_i}(\alpha_j^*) = - \sum_{k=1}^d s_{j,k}^i
\alpha_k^*\,.
\]
Thus, the right $U$-module structure of $\Lambda^d_SL^\vee$ is such
that, for all $\alpha\in L$,
\begin{equation}
  \label{eq:69}
  \varphi_L \cdot \alpha = \mathrm{Tr}(\mathrm{ad}_{\alpha}) \varphi_L\,.
\end{equation}
Using this simplified description of $\Lambda^d_SL^\vee$ yields the
following corollary of the main theorems of this article. 
\begin{corr}
  \label{sec:when-l-free-1}
  Let $R$ be a commutative ring. Let $(S,L)$ be a Lie-Rinehart algebra
  of $R$. Denote by $U$ its enveloping algebra. Assume that
  \begin{itemize}
  \item $S$ is flat as an $R$-module,
  \item $S$ has Van den Bergh duality in dimension $n$,
  \item $L$ is free of rank $d$ as an $S$-module.
  \end{itemize}
  Let $(\alpha_1,\ldots,\alpha_d)$ be a basis of $L$ over $S$ as
  considered previously. Then, $U$ has Van den Bergh duality in
  dimension $n+d$ and there is an isomorphism of $U$-bimodules
  \[
  \Ext^{n+d}_{U^e}(U,U^e) \simeq U \otimes_S \Ext^n_{S^e}(S,S^e)\,,
  \]
  where the left $U$-module structure on $U \otimes_S
  \Ext^n_{S^e}(S,S^e)$ is the natural one and the right
  $U$-module structure is such that, for all $u\in U$, $e\in
  \Ext^n_{S^e}(S,S^e)$ and $\alpha \in L$,
  \[
  (u \otimes e) \cdot \alpha = u\alpha \otimes e + u \otimes \mathrm{Tr}(\mathrm{ad}_{\alpha})e - u \otimes \mathcal
  L_{\partial_\alpha}(e)\,.
  \]
  If, moreover, $S$ is Calabi-Yau, then $U$ is skew Calabi-Yau and
  each volume form on $S$ determines a Nakayama automorphism $\nu \in
  \mathrm{Aut}_R(U)$ such that, for all $s\in S$ and $\alpha \in L$,
  \[
  \left\{
    \begin{array}{rcl}
      \nu(s) & = & s \\
      \nu(\alpha) & = & \alpha + \mathrm{Tr}(\mathrm{ad}_{\alpha}) + \mathrm{div}(\partial_\alpha)\,,
    \end{array}\right.
  \]
  where $\mathrm{div}$ denotes the divergence of the chosen volume form.  
\end{corr}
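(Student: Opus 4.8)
The plan is to deduce the statement directly from
  Theorems~\ref{sec:introduction} and \ref{sec:introduction-1} by
  specialising the description of the inverse dualising bimodule to the
  free generator $\varphi_L=\alpha_1^*\wedge\cdots\wedge\alpha_d^*$ of
  $\Lambda^d_SL^\vee$. Since $L$ is free of rank $d$ over $S$, it is
  finitely generated and projective with constant rank $d$ in
  $\Mod(S)$, and both $\Lambda^d_SL$ and $\Lambda^d_SL^\vee$ are free
  of rank one over $S$. Hence Theorem~\ref{sec:introduction} applies at
  once: $U$ has Van den Bergh duality in dimension $n+d$ and there is
  an isomorphism of $U$-bimodules
  \[
  \Ext^{n+d}_{U^e}(U,U^e) \simeq F(\Lambda^d_SL^\vee \otimes_S \Ext^n_{S^e}(S,S^e))\,,
  \]
  where the tensor product inside $F(-)$ carries the left $U$-module
  structure obtained from \eqref{eq:50} applied to the right $U$-module
  $\Lambda^d_SL^\vee$ and the compatible left $S\rtimes L$-module
  $\Ext^n_{S^e}(S,S^e)$.

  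Next I would trivialise $\Lambda^d_SL^\vee$ by means of $\varphi_L$:
  this yields an isomorphism of left $U$-modules
  $U\otimes_S(\Lambda^d_SL^\vee\otimes_S\Ext^n_{S^e}(S,S^e))\xrightarrow{\sim} U\otimes_S\Ext^n_{S^e}(S,S^e)$
  sending $u\otimes\varphi_L\otimes e$ to $u\otimes e$, compatible with
  the natural left $U$-module structures. It then remains to transport
  the right $U$-module structure of $F(-)$ through this isomorphism.
  Using the formula for the right action in $F(-)$ recalled in
  Section~\ref{sec:functor-fcolon-modu}, the left action \eqref{eq:50},
  the identity $\varphi_L\cdot\alpha=\mathrm{Tr}(\mathrm{ad}_\alpha)\varphi_L$
  established in \eqref{eq:69}, and $\alpha\cdot e=\mathcal L_{\partial_\alpha}(e)$
  from \eqref{eq:20}, and moving the scalar $\mathrm{Tr}(\mathrm{ad}_\alpha)\in S$
  across $\otimes_S$, one obtains, for all $u\in U$, $e\in\Ext^n_{S^e}(S,S^e)$ and $\alpha\in L$,
  \[
  (u\otimes e)\cdot\alpha = u\alpha\otimes e - u\otimes\alpha\cdot(\varphi_L\otimes e) = u\alpha\otimes e + u\otimes\mathrm{Tr}(\mathrm{ad}_\alpha)e - u\otimes\mathcal L_{\partial_\alpha}(e)\,,
  \]
  which is precisely the asserted right $U$-module structure.

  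For the last assertion, assume moreover that $S$ is Calabi-Yau in
  dimension $n$. Since $\Lambda^d_SL$ is free,
  Theorem~\ref{sec:introduction-1} applies and shows that $U$ is skew
  Calabi-Yau with a Nakayama automorphism $\nu$ that fixes $S$
  pointwise and satisfies
  $\nu(\alpha)=\alpha+\lambda_L(\alpha)+\mathrm{div}(\partial_\alpha)$
  for all $\alpha\in L$, where $\lambda_L$ is the trace mapping on
  $\Lambda^d_SL^\vee$ attached to a chosen free generator and
  $\mathrm{div}$ the divergence of the chosen volume form. Choosing
  $\varphi_L=\alpha_1^*\wedge\cdots\wedge\alpha_d^*$ as the free
  generator, \eqref{eq:69} says precisely that
  $\lambda_L(\alpha)=\mathrm{Tr}(\mathrm{ad}_\alpha)$, whence
  $\nu(\alpha)=\alpha+\mathrm{Tr}(\mathrm{ad}_\alpha)+\mathrm{div}(\partial_\alpha)$,
  as claimed.

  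I expect no genuine obstacle here; the only point requiring care is
  the sign bookkeeping when passing the bimodule structure through the
  trivialisation by $\varphi_L$ (the conventions of \eqref{eq:67},
  \eqref{eq:50}, and of the right $U$-action defining $F(-)$), together
  with the verification of $\varphi_L\cdot\alpha=\mathrm{Tr}(\mathrm{ad}_\alpha)\varphi_L$,
  which reduces, via the Lie derivative $\lambda_\alpha$, to reading
  off the diagonal entries of the matrices $(s^i_{j,k})$ of the
  $\mathrm{ad}_{\alpha_i}$. All the substantive content is already
  contained in the two main theorems of the article.
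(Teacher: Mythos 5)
Your proposal is correct and takes essentially the same route as the paper: the paper also uses \eqref{eq:69} to trivialise $\Lambda^d_SL^\vee$ as a right $U$-module (so that $1\cdot\alpha=\mathrm{Tr}(\mathrm{ad}_\alpha)$) and then deduces everything from Theorems~\ref{sec:introduction} and~\ref{sec:introduction-1}. Your write-up merely makes explicit the transport of the $U$-bimodule structure through the trivialisation by $\varphi_L$ and the identification $\lambda_L(\alpha)=\mathrm{Tr}(\mathrm{ad}_\alpha)$, which the paper leaves implicit.
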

\begin{proof}
  In view of \eqref{eq:69}, there is an isomorphism of right
  $U$-modules
  \[
  \Lambda^d_SL^\vee \simeq S\,,
  \]
  where the right $U$-module structure on the right-hand side term is
  such that, for all $\alpha\in L$,
  \[
  1\cdot \alpha = \mathrm{Tr}(\mathrm{ad}_\alpha)\,.
  \]
  The corollary therefore follows directly from
  Theorems~\ref{sec:introduction} and \ref{sec:introduction-1}.
\end{proof}

The previous corollary applies to any Lie-Rinehart algebra arising
from a Poisson structure on $R[x_1,\ldots,x_n]$,
$n\in \mathbb N\backslash\{0,1\}$. 
\begin{exple}
  Let $S=R[x,y]$. Let $P\in S$. This defines a Poisson
  structure on $S$ such that
  \[
  \{x,y\} = P \,.
  \]
  Let $L:= \Omega_{S/R}$ and consider $(S,L)$ as a Lie-Rinehart
  algebra over $R$ such that, for all $s,t\in S$
  \begin{itemize}
  \item $[ds,dt] = d\{s,t\}$;
  \item $\partial_{ds} = \{s,-\}$.
  \end{itemize}
  Then $(dx,dy)$ is a basis of $\Omega_{S/R}$ over $S$. Note that
  \[
  \left\{
    \begin{array}{lclcl}
      \mathrm{Tr}(\mathrm{ad}_{dx})
      & = &
            \mathrm{div}(\partial_{dx})
      & = & \frac{\partial P}{\partial y} \\
      \mathrm{Tr}(\mathrm{ad}_{dy})
      & = &
            \mathrm{div}(\partial_{dy})
      & = & -\frac{\partial P}{\partial x}\,.
    \end{array}
    \right.
  \]
  From Corollary~\ref{sec:when-l-free-1}, $U$ is skew Calabi-Yau in
  dimension $4$ and has a Nakayama automorphism
  $\nu \in \mathrm{Aut}_R(S)$ such that
  \[
  \left\{
  \begin{array}{rclrcl}
    \nu(x) & = & x\,, & \nu(dx) & = & dx +2\frac{\partial P}{\partial
                                       y} \\ \\
    \nu(y) & = & y\,, & \nu(dy) & = & dy -2\frac{\partial P}{\partial
                                       x}\,.
  \end{array}\right.
\]
By considering the filtration of $U$ by the powers of the
image of $L$ in $U$, with associated graded algebra the symmetric algebra
of $L$ over $S$ (see \cite[Theorem 3.1]{MR0154906}), it appears that
$U^\times = S^\times = R^\times$.
Accordingly, $U$ has no nontrivial inner automorphism. Consequently, $U$
is Calabi-Yau if and only if $\nu=\mathrm{Id}_U$, that is, if and only if
$\mathrm{char}(R)=2$, or else $P\in R$.
\end{exple}

\begin{exple}
  Let $S=R[x,y,z]$. Let $P_x,P_y,P_z\in S$ be such that
  \[
  \overrightarrow P \wedge \mathrm{curl}(\overrightarrow P) = 0\,,
  \]
  where $\overrightarrow P$ denotes
  \[
  \left(
    \begin{smallmatrix}
      P_x \\ P_y \\ P_z
    \end{smallmatrix}\right)\,.
  \]
  Hence, the following defines a Poisson
  bracket on $S$,
  \[
  \{x,y\} = P_z\,,\ \{y,z\} = P_x\,,\ \{z,x\} = P_y\,.
  \]
  As in the previous example, let $(S,L:=\Omega_{S/R})$ be the
  associated Lie-Rinehart algebra over $R$. As is well-known,
  \[
  \begin{array}{lll}
  \{x,-\} = P_z \frac{\partial}{\partial y} -
    P_y\frac{\partial}{\partial z}\,,
    &
  \{y,-\} = P_x \frac{\partial}{\partial z} -
    P_z\frac{\partial}{\partial x}\,,
    &
  \{z,-\} = P_y \frac{\partial}{\partial x} -
      P_x\frac{\partial}{\partial y}\,.
  \end{array}
  \]
  Therefore, using the basis $(dx,dy,dz)$ of $\Omega_{S/R}$ over $S$,
  \[
  \left(
    \begin{smallmatrix}
      \mathrm{div}(\partial_{dx}) \\
      \mathrm{div}(\partial_{dy}) \\
      \mathrm{div}(\partial_{dz})
    \end{smallmatrix}
  \right)
  = \left(
    \begin{smallmatrix}
      \mathrm{Tr}(\mathrm{ad}_{dx}) \\
      \mathrm{Tr}(\mathrm{ad}_{dy}) \\
      \mathrm{Tr}(\mathrm{ad}_{dz}) \\
    \end{smallmatrix}
  \right)
  = \mathrm{curl}(\overrightarrow{P})\,.
  \]
  Using Corollary~\ref{sec:when-l-free-1}, it follows that $U$ is
  skew Calabi-Yau in dimension $6$ and has a Nakayama automorphism
  $\nu \in \mathrm{Aut}_R(S)$ such that
  \[
  \begin{array}{lcl}
  \left(
    \begin{smallmatrix}
      \nu(x) \\ \nu(y) \\ \nu(z)
    \end{smallmatrix}
  \right)
  =
  \left(
  \begin{smallmatrix}
    x \\ y \\ z
  \end{smallmatrix}
    \right)
 &
   \text{and}
 &
  \left(
    \begin{smallmatrix}
      \nu(dx) \\ \nu(dy) \\ \nu(dz)
    \end{smallmatrix}
  \right)
  =
  \left(
  \begin{smallmatrix}
    dx \\ dy \\ dz
  \end{smallmatrix}
    \right) + 2\,\mathrm{curl}(\overrightarrow P)\,.
  \end{array}
  \]
  As in the previous example, there are no nontrivial inner
  automorphisms for $U$. Hence, $U$ is Calabi-Yau if and only if $\mathrm{char}(R) =2$, or else $\mathrm{curl}(\overrightarrow P) =0$. In
  particular, when $R$ contains $\mathbb Q$ as a subring, then $U$ is
  Calabi-Yau if and only if the Poisson bracket is Jacobian, that is,
  there exists $Q \in S$ such that $P=\overrightarrow{\mathrm{grad}}(Q)$.
\end{exple}

By the Quillen-Suslin Theorem, when $R$ is a field and $n\in \mathbb
N$, any $R[x_1,\ldots,x_n]$-module that is finitely generated and
projective is free. Hence, Corollary~\ref{sec:when-l-free-1} also
applies to all Lie-Rinehart algebras of the shape
$(R[x_1,\ldots,x_n],L)$, where $R$ is a field.

\subsection{On two dimensional Nambu-Poisson structures}
\label{sec:class-two-dimens}

Following Corollary~\ref{sec:main-results-1}, $U$ is skew Calabi-Yau
when $S$ is flat over $R$ and Calabi-Yau and $(S,L)$ is given by a
Poisson bracket on $S$. Assuming these properties, this section
computes a Nakayama automorphism of $U$ for a class of examples of two
dimensional Nambu-Poisson structures (see \cite[Section
8.3]{MR2906391}).

Let $S = R[x,y,z]/(P)$ where $P=1+T$ for some $T\in R[x,y,z]$ which is
$(p,q,r)$-homogeneous in the sense that $p,q,r\in R$ and
$t:=p\alpha +q\beta +r\gamma$ is a unit in $R$ which does not depend
on the monomial $x^\alpha y^\beta z^\gamma$ appearing in $T$.  The
hypotheses imply the following equality in $S$
\begin{equation}
  \label{eq:54}
  px\frac{\partial P}{\partial x} + qy \frac{\partial P}{\partial y} +
  rz \frac{\partial P}{\partial z} = -t\ (\in R^\times)\,.
\end{equation}
Let $Q\in R[x,y,z]$ and endow $S$ with the Poisson structure such that
\begin{equation}
  \label{eq:65}
  \{x,y\} = Q\frac{\partial P}{\partial z}\,,\ \{y,z\} = Q\frac{\partial
    P}{\partial x}\,,\ \{z,x\} = Q \frac{\partial P}{\partial y}\,.
\end{equation}
Consider $(S,L:=\Omega_{S/R})$ as a Lie-Rinehart algebra
such that, for all $s,t,s'\in S$,
\begin{itemize}
\item $[ds,dt] = d\{s,t\}$,
\item $(sdt)(s') = s\{t,s'\}$.
\end{itemize}

Consider the following $2$-form on $S$
\[
\omega_S = px dy\wedge dz + qy dz \wedge dx + rz dx \wedge dy\,.
\]
According to (\ref{eq:54}), $\Omega_{S/R}$ is a projective $S$-module
of rank $2$. And the relation
\[
\frac{\partial P}{\partial x}dx+ \frac{\partial P}{\partial y}dy+
\frac{\partial P}{\partial z}dz=0
\]
in $\Omega_{S/R}$ yields the following relations in
$\Lambda^2_S\Omega_{S/R}$
\[
\begin{array}{rcl}
  \frac{\partial P}{\partial x}\,dx\wedge dy & = & \frac{\partial
                                                   P}{\partial z}\,
                                                   dy\wedge dz \\ \\
  \frac{\partial P}{\partial y}\,dy\wedge dz & = & \frac{\partial
                                                   P}{\partial x}\,
                                                   dz\wedge dx \\ \\
  \frac{\partial P}{\partial z}\,dz\wedge dx & = & \frac{\partial
                                                   P}{\partial y}\,
                                                   dx\wedge dy\,.
\end{array}
\]
Combining with (\ref{eq:54}) yields
\[
\begin{array}{rcl}
  dx\wedge dy & = & -t^{-1}\frac{\partial P}{\partial z} \omega_S \\ \\
  dy\wedge dz & = & -t^{-1}\frac{\partial P}{\partial x} \omega_S \\ \\
  dz\wedge dx & = & -t^{-1}\frac{\partial P}{\partial y} \omega_S\,.
\end{array}
\]
Thus, $\omega_S$ is a volume form of $S$.

In order to determine the divergence of $\omega_S$, consider the
derivations $\delta_x,\delta_y,\delta_z\in \mathrm{Der}_R(S)$ given by
\[
\begin{array}{lll}
  \begin{array}{cccc}
    \delta_x \colon & x & \mapsto & 0 \\ \\
                    & y & \mapsto & \frac{\partial P}{\partial z} \\ \\
                    & z & \mapsto & -\frac{\partial P}{\partial y}
  \end{array}
                    &
                      \begin{array}{cccc}
                        \delta_y \colon & x & \mapsto &
                                                        -\frac{\partial P}{\partial z} \\ \\
                                        & y & \mapsto & 0 \\ \\
                                        & z & \mapsto & \frac{\partial P}{\partial x}
                      \end{array}
                    &
                      \begin{array}{cccc}
                        \delta_z \colon & x & \mapsto & \frac{\partial
                                                        P}{\partial y}
                        \\ \\
                                        & y & \mapsto &
                                                        -\frac{\partial P}{\partial x} \\ \\
                                        & z & \mapsto & 0\,.
                      \end{array}
\end{array}
\]
Note that
\begin{center}
  $\{x,-\} = Q\delta_x$, $\{y,-\} = Q\delta_y$ and $\{z,-\} = Q\delta_z$.
\end{center}
Then,
\[
\begin{array}{rcll}
  \iota_{\delta_x}(\omega_S)
  & = &
        \iota_{\delta_x}(px dy\wedge dz + qy dz \wedge dx + rz dx
        \wedge dy) \\ \\
  & = &
        px(\frac{\partial P}{\partial z}dz + \frac{\partial
        P}{\partial y}dy) - qy \frac{\partial P}{\partial y} dx
        - rz \frac{\partial P}{\partial z}dx \\ \\
  & = &
        tdx & \text{(see (\ref{eq:54})).}
\end{array}
\]
Therefore, using the symmetry between $x$, $y$ and $z$,
\[
\mathrm{div}(\delta_x) = \mathrm{div}(\delta_y) = \mathrm{div}(\delta_z)=0\,.
\]
Apply Lemma~\ref{sec:trace-maps-lambda_l} taking into account that
$\lambda_S=-\mathrm{div}$ (see \eqref{eq:56}; then,
\[
\mathrm{div}(\{x,-\}) = \mathrm{div}(Q\delta_x) = Q \mathrm{div}(\delta_x) +
\delta_x(Q)\,.
\]
Therefore,
\begin{equation}
  \label{eq:32}
  \mathrm{div}(\{x,-\}) = \frac{\partial Q}{\partial y}\frac{\partial
    P}{\partial z} - \frac{\partial Q}{\partial z}\frac{\partial
    P}{\partial y}\,.
\end{equation}
Applying Corollary~\ref{sec:main-results-1} gives that the
enveloping algebra $U$ of $(S,\Omega_{S/R})$ is skew Calabi-Yau. It
has a Nakayama automorphism $\nu \colon U\to U$ such that, for all
$s\in S$,
\[
\left\{
\begin{array}{rcl}
  \nu(s) & = & s \\
  \left(
  \begin{smallmatrix}
    \nu(dx) \\ \nu(dy) \\ \nu(dz)
  \end{smallmatrix}
  \right)
         & = &
               \left(
               \begin{smallmatrix}
                 dx \\ dy \\ dz
               \end{smallmatrix}
  \right)
  +
  2\,\overrightarrow{\mathrm{grad}}(Q) \wedge
  \overrightarrow{\mathrm{grad}}(P)\,.
\end{array}\right.
\]
\bibliographystyle{alpha}
\bibliography{biblio}
\end{document}